\newcommand{\M}{\mathbb{M}}
\newcommand{\N}{\mathbb{N}}
\newcommand{\dA}{\mathbb{A}}
\newcommand{\dB}{\mathbb{B}}
\newcommand{\dC}{\mathbb{C}}
\newcommand{\dD}{\mathbb{D}}
\newcommand{\dG}{\mathbb{G}}
\newcommand{\dM}{\mathbb{M}}
\newcommand{\dP}{\mathbb{P}}
\newcommand{\dS}{\mathbb{S}}
\newcommand{\dT}{\mathbb{T}}
\newcommand{\dV}{\mathbb{V}}
\newcommand{\alp}{\mathsf{alph}}
\newcommand{\Aut}{\mathsf{Aut}}
\newcommand{\CR}{\mathsf{core}}
\newcommand{\Inn}{\mathsf{Inn}}
\newcommand{\IRR}{\mathsf{IRR}}
\newcommand{\NF}{\mathsf{NF}}
\newcommand{\Out}{\mathsf{Out}}
\newcommand{\RSCCP}{\mathsf{RSCCP}}
\newcommand{\SCCP}{\mathsf{CCP}}
\newcommand{\val}{\mathsf{val}}
\newcommand{\amalgam}{\otimes}
\renewcommand{\phi}{\varphi}
\renewcommand{\epsilon}{\varepsilon}
\begin{document}

\title{Compressed Conjugacy and the Word Problem for Outer
  Automorphism Groups of Graph Groups}

\author{Niko Haubold \and Markus Lohrey \and Christian Mathissen}

\institute{Institut f\"ur Informatik, Universit\"at Leipzig, Germany\\
  \texttt{\{haubold,lohrey,mathissen\}@informatik.uni-leipzig.de} }

\maketitle

\begin{abstract}
  It is shown that for graph groups (right-angled Artin groups)
  the conjugacy problem
  as well as a restricted version of the simultaneous
  conjugacy problem can be solved
  in polynomial time even if input words are represented
  in a compressed form. As a consequence it follows
  that the word problem for the outer automorphism group of a
  graph group  can be solved in polynomial time.
\end{abstract}

\section{Introduction}

{\em Automorphism groups} and {\em outer automorphism groups} of {\em
  graph groups} received a lot of interest in the past few
years. A graph group $\dG(\Sigma,I)$ is given by a finite
undirected graph $(\Sigma,I)$ (without self-loops).  The set $\Sigma$
is the set of generators of $\dG(\Sigma,I)$ and every edge $(a,b) \in
I$ gives rise to a commutation relation $ab=ba$.  Graph groups are
also known as {\em right-angled Artin groups} or {\em free partially
  commutative groups}.  Graph groups interpolate between finitely
generated free groups and finitely generated free Abelian groups.  The
automorphism group of the free Abelian group $\mathbb{Z}^n$ is
$\mathsf{GL}(n,\mathbb{Z})$ and hence finitely generated. By a
classical result of Nielsen, also automorphism groups of free groups
are finitely generated, see e.g. \cite{LySch77}. For graph groups in
general, it was shown by Laurence \cite{Lau95} (building up on
previous work by Servatius \cite{Ser89}) that their automorphism
groups are finitely generated. Only recently, Day \cite{Day09} has
shown that $\Aut(\dG(\Sigma,I))$ is always finitely presented.  Some
recent structural results on automorphism groups of graph groups can
be found in \cite{ChCrVo07,ChoVog09}; for a survey see \cite{Cha07}.

In this paper, we continue the investigation of algorithmic aspects of
automorphism groups of graph groups. In \cite{LoSchl07} it was shown
that the word problem for $\Aut(\dG(\Sigma,I))$ can be solved in
polynomial time. The proof of this result used compression techniques.
It is well-known that the word problem for $\dG(\Sigma,I)$ can be
solved in linear time. In \cite{LoSchl07}, a compressed (or succinct)
version of the word problem for graph groups was studied. In this
variant of the word problem, the input word is represented succinctly
by a so-called \emph{straight-line program}. This is a context free
grammar $\dA$ that generates exactly one word $\val(\dA)$, see
Section~\ref{S SLP}.  Since the length of this word may grow
exponentially with the size (number of productions) of the SLP $\dA$,
SLPs can be seen indeed as a succinct string representation. SLPs
turned out to be a very flexible compressed representation of strings,
which are well suited for studying algorithms for compressed data, see
e.g. \cite{GaKaPlRy96,Lif07,Loh06siam,MiShTa97,Pla94,PlaRy98}.  In
\cite{LoSchl07,Schl06} it was shown that the word problem for the
automorphism group $\Aut(G)$ of a group $G$ can be reduced in
polynomial time to the {\em compressed word problem} for $G$, where
the input word is succinctly given by an SLP.  In \cite{Schl06}, it
was shown that the compressed word problem for a finitely generated
free group $F$ can be solved in polynomial time and in \cite{LoSchl07}
this result was extended to graph groups.  It follows that the word
problem for $\Aut(\dG(\Sigma,I))$ can be solved in polynomial time.
Recently, Macdonald \cite{Macd09} has shown that also the compressed
word problem for every fully residually free group can be solved in
polynomial time.

It is not straightforward to carry over the above mentioned complexity
results from $\Aut(\dG(\Sigma,I))$ to the {\em outer} automorphism
group $$\Out(\dG(\Sigma,I)) =
\Aut(\dG(\Sigma,I))/\text{Inn}(\dG(\Sigma,I)).$$ Nevertheless,
Schleimer proved in \cite{Schl06} that the word problem for the outer
automorphism group of a finitely generated free group can be decided
in polynomial time.  For this, he used a compressed variant of the
simultaneous conjugacy problem in free groups. In this paper, we
generalize Schleimer's result to graph groups: For every graph
$(\Sigma,I)$, the word problem for $\Out(\dG(\Sigma,I))$ can be solved
in polynomial time.  Analogously to Schleimer's approach for free
groups, we reduce the word problem for $\Out(\dG(\Sigma,I))$ to a
compressed variant of the simultaneous conjugacy problem in
$\dG(\Sigma,I)$. In this problem, we have given an SLP $\dA_a$ for
every generator $a \in \Sigma$, and the question is whether there
exists $x \in \dG(\Sigma,I)$ such that $a = x\, \val(\dA_a)\, x^{-1}$ for
all $a \in \Sigma$. A large part of this paper develops a polynomial
time algorithm for this problem.  Moreover, we also present a
polynomial time algorithm for the compressed version of the classical
conjugacy problem in graph groups: In this problem, we have given two
SLPs $\dA$ and $\dB$ and we ask whether there exists $x \in
\dG(\Sigma,I)$ such that $\val(\dA) = x\, \val(\dB) x^{-1}$ in
$\dG(\Sigma,I)$.  For our polynomial time algorithm, we have
to develop a pattern matching algorithm for SLP-compressed
Mazurkiewicz traces, which is inspired by a pattern matching
algorithm for hierarchical message sequence charts from \cite{GenMus08}.
For the non-compressed version of the 
conjugacy problem in $\dG(\Sigma,I)$, a
linear time algorithm was presented in \cite{Wra89} based on
\cite{LiuWraZeg90}.  In \cite{CrGoWi09} this result was generalized to
various subgroups of graph groups.

\section{Preliminaries}

Let $\Sigma$ be a finite alphabet.  For a word $s = a_1 \cdots a_m$
($a_i \in \Sigma$) let
\begin{itemize}
\item $|s| = m$, $\alp(s) = \{a_1, \ldots, a_m\}$,
\item $s[i] = a_i$ for $1 \leq i \leq m$,
\item $s[i:j] = a_i \cdots a_j$ for $1 \leq i \leq j \leq m$ and
  $s[i:j] = \varepsilon$ for $i > j$, and
\item $|s|_a = |\{k \mid  s[k]=a\}|$ for $a \in \Sigma$.
\end{itemize}
We use $\Sigma^{-1} = \{ a^{-1} \mid  a \in \Sigma\}$ to denote a
disjoint copy of $\Sigma$ and let $\Sigma^{\pm 1} = \Sigma \cup
\Sigma^{-1}$. Define $(a^{-1})^{-1} = a$; this defines an involution
${}^{-1} : \Sigma^{\pm 1} \to \Sigma^{\pm 1}$, which can be extended
to an involution on $(\Sigma^{\pm 1})^*$ by setting $(a_1 \cdots
a_n)^{-1} = a_n^{-1} \cdots a_1^{-1}$.

\subsection{Straight-line programs}
\label{S SLP}

We are using straight-line programs as a succinct representation of
strings with reoccurring subpatterns \cite{PlRy99}. A
\textit{straight-line program (SLP) over the alphabet $\Gamma$} is a
context free grammar $\dA=(V,\Gamma,S,P)$, where $V$ is the set of
\textit{nonterminals}, $\Gamma$ is the set of \textit{terminals},
$S\in V$ is the \textit{initial nonterminal}, and $P\subseteq V \times
(V\cup \Gamma)^*$ is the set of \textit{productions} such that (i) for
every $X\in V$ there is exactly one $\alpha \in (V \cup \Gamma)^*$
with $(X,\alpha)\in P$ and (ii) there is no cycle in the relation
$\{(X,Y)\in V\times V \mid  \exists \alpha : (X,\alpha) \in P, Y 
\in \alp(\alpha) \}$. These conditions ensure that the language
generated by the straight-line program $\dA$ contains exactly one word
$\val(\dA)$. Moreover, every nonterminal $X\in V$
generates exactly one word that is denoted by $\val_\dA(X)$, or
briefly $\val(X)$, if $\dA$ is clear from the context.  The size of
$\dA$ is $|\dA|= \sum_{(X,\alpha)\in P}|\alpha|$. It can be seen
easily that an SLP can be transformed in polynomial time into an
equivalent SLP in \textit{Chomsky normal form}, which means that all
productions have the form $A\rightarrow BC$ or $A\rightarrow a$ with
$A,B,C\in V$ and $a \in \Gamma$.

For an SLP $\dA$ over $\Sigma^{\pm 1}$ (w.l.o.g. in Chomsky normal
form) we denote with $\dA^{-1}$ the SLP that has for each terminal
rule $A\rightarrow a$ from $\dA$ the terminal rule $A\rightarrow
a^{-1}$ and for each nonterminal rule $A\rightarrow BC$ from $\dA$ the
nonterminal rule $A\rightarrow CB$.  Clearly, $\val(\dA^{-1}) =
\val(\dA)^{-1}$.

Let us state some simple algorithmic problems that
can be easily solved in polynomial time:
\begin{itemize}
\item Given an SLP $\dA$, calculate $|\val(\dA)|$.
\item Given an SLP $\dA$ and a number $i \in \{1, \ldots, |\val(\dA)|\}$, calculate
$\val(\dA)[i]$.
\item Given an SLP $\dA$ and two numbers $1 \leq i \leq j \leq  |\val(\dA)|$,
compute and SLP $\dB$ with $\val(\dB) = \val(\dA)[i,j]$.
\end{itemize}
In \cite{Pla94}, Plandowski presented a polynomial time algorithm
for testing whether $\val(\dA) = \val(\dB)$ for two given 
SLPs $\dA$ and $\dB$. A cubic algorithm was presented by Lifshits \cite{Lif07}.
In fact, Lifshits gave an algorithm for compressed pattern
matching: given SLPs $\dA$ and $\dB$, is $\dA$ a factor of $\dB$?
The running time of his algorithm is $O(|\dA| \cdot |\dB|^2)$.

A \emph{composition system} $\dA = (V,\Gamma,S,P)$ is defined
analogously to an SLP, but in addition to productions of the form
$A\to \alpha$ ($A\in V$, $\alpha\in (V\cup\Gamma)^*$) it may also
contain productions of the form $A \to B[i:j]$ for $B\in V$ and $i,j
\in \mathbb{N}$ \cite{GaKaPlRy96}. For such a production we define
$\val_{\dA}(A) = \val_{\dA}(B)[i:j]$.
The size of a production $A \to B[i:j]$ is
$\lceil\log(i)\rceil+\lceil\log(j)\rceil$.  As for SLPs we define
$\val(\dA) = \val_{\dA}(S)$.  In \cite{Hag00}, Hagenah presented a
polynomial time algorithm, which transforms a given composition system
$\dA$ into an SLP $\dB$ with $\val(\dA) = \val(\dB)$.
Composition systems were also heavily used in \cite{LoSchl07,Schl06}
in order to solve compressed word problems efficiently.

\subsection{Trace monoids and graph groups}

We introduce some notions from trace theory. For a thorough
introduction see \cite{DieRoz95}. An \emph{independence alphabet} is a
finite undirected graph $(\Sigma,I)$ without loops. Then $I\subseteq
\Sigma \times \Sigma$ is an irreflexive and symmetric relation. The
complementary graph $(\Sigma,D)$ with $D=
(\Sigma\times\Sigma)\setminus I$ is called a \emph{dependence
  alphabet}. The \emph{trace monoid} $\dM(\Sigma,I)$ is defined as the
quotient $\dM(\Sigma,I)=\Sigma^*/\{ab=ba\mid (a,b)\in I\}$ with
concatenation as operation and the empty word as the neutral
element. This monoid is cancellative and its elements are called
\emph{traces}. The trace represented by the word $s\in \Sigma^*$ is
denoted by $[s]_I$. As an ease of notation we denote with
$u\in\Sigma^*$ also the trace $[u]_I$. For $a\in \Sigma$ let
$I(a)=\{b\in\Sigma\mid (a,b)\in I\}$ be the letters that commute with $a$
and $D(a)=\{b\in \Sigma\mid  (a,b)\in D\}$ be the letters that are
dependent from $a$. For traces $u,v\in \dM(\Sigma,I)$ we denote with
$uIv$ the fact that $\alp(u)\times \alp(v)\subseteq I$. For
$\Gamma\subseteq \Sigma$ we say that $\Gamma$ is \emph{connected} if the
subgraph of $(\Sigma,D)$ induced by $\Gamma$ is connected. For a trace
$u$ we denote with $\max(u)$ the set of possible last letters of $u$,
i.e., $\max(u) = \{a\mid u=va\text{ for } a\in\Sigma, v\in
\dM(\Sigma, I)\}$. Analogously we define $\min(u)$ to be the set of
possible first letters i.e., $\min(u) = \{a\mid u=av\text{ for }
a\in\Sigma,v\in \dM(\Sigma, I)\}$.

A convenient representation for traces are \emph{dependence graphs},
which are node-labeled directed acyclic graphs. For a word
$w \in\Sigma^*$ the dependence graph $D_w$ has vertex set
$\{1,\dots,|w|\}$ where the node $i$ is labeled with $w[i]$. There is an
edge from vertex $i$ to $j$ if and only if $i<j$ and $(w[i],w[j])\in
D$. It is easy to see that for two words $w,w'\in \Sigma^*$ we have
$[w]_I = [w']_I$ if and only if $D_w$ and $D_{w'}$ are isomorphic. Hence, we
can speak of \emph{the} dependence graph of a trace.

For background in combinatorial group theory see \cite{LySch77}. The
\emph{free group} $F(\Sigma)$ generated by $\Sigma$ can be defined as
the quotient monoid
$$
F(\Sigma) = (\Sigma^{\pm 1})^*/\{ aa^{-1} = \varepsilon \mid  a \in
\Sigma^{\pm 1} \}.
$$
For an independence alphabet $(\Sigma, I)$ the {\em graph group}
$\mathbb{G}(\Sigma,I)$ is defined as the quotient group
$$
\mathbb{G}(\Sigma,I) = F(\Sigma)/\{ab = ba \mid  (a,b) \in I\}.
$$
{}From the independence alphabet $(\Sigma,I)$
we derive the independence alphabet 
$$(\Sigma^{\pm 1}, \{(a^{\epsilon_1},b^{\epsilon_2})\mid (a,b)\in
I,\; \epsilon_1,\epsilon_2\in\{-1,1\}\}).$$ 
Abusing notation, we denote the independence relation 
of this alphabet again with $I$. 

Note that $(a,b)\in I$ implies $a^{-1}b = ba^{-1}$ in
$\mathbb{G}(\Sigma,I)$. Thus, the graph group $\mathbb{G}(\Sigma,I)$
can be also defined as the quotient
$$
\mathbb{G}(\Sigma,I) = \mathbb{M}(\Sigma^{\pm 1},I)/\{ aa^{-1} =
\varepsilon \mid  a \in \Sigma^{\pm 1} \}.
$$
Graph groups are also known as right-angled Artin groups and free
partially commutative groups.

\subsection{(Outer) automorphism groups}

The \emph{automorphism group} $\Aut(G)$ of a group $G$ is the set of
all bijective homomorphisms from $G$ to itself with composition as
operation and the identity mapping as the neutral element. An
automorphism $\varphi$ is called \emph{inner} if there is a group
element $x\in G$ such that $\varphi(y)=xyx^{-1}$ for all $y\in G$. 
The set of all inner automorphisms
for a group $G$ forms the \emph{inner automorphism group} of $G$
denoted by $\Inn(G)$. This is easily seen to be a normal subgroup of
$\Aut(G)$ and the quotient group $\Out(G)=\Aut(G)/\Inn(G)$
is called the \emph{outer automorphism group} of $G$.

Assume that $\Aut(G)$ is finitely generated\footnote{In general, this
  won't be the case, even if $G$ is finitely generated.}  and let
$\Psi=\{\psi_1,\dots,\psi_k\}$ be a monoid generating set for
$\Aut(G)$. Then $\Psi$ also generates $\Out(G)$ where we identify
$\psi_i$ with the coset $\psi_i\,\Inn(G)$ for $i\in\{1,\dots,k\}$.
Then the \emph{word problem} for the outer automorphism group can be
viewed as the following decision problem:

\medskip

\noindent
INPUT: A word $w\in\Psi^*$.\\
QUESTION: Does $w=1$ in $\Out(G)$?

\medskip
\noindent
Since an automorphism belongs to the same coset (with respect to $\Inn(G)$) as
the identity if and only if it is inner, we can rephrase the word
problem for $\Out(G)$ as follows:

\medskip
\noindent
INPUT: A word $w\in\Psi^*$. \\
QUESTION: Does $w$ represent an element of $\Inn(G)$ in $\Aut(G)$?

\medskip
\noindent
Building on results from \cite{Ser89}, Laurence has shown in
\cite{Lau95} that automorphism groups of graph groups are finitely
generated. Recently, Day \cite{Day09} proved that automorphism groups
of graph groups are in fact finitely presented. Further results on
(outer) automorphism groups of graph groups can be found in
\cite{ChCrVo07,ChoVog09}. The main purpose of this paper is to give a
polynomial time algorithm for the word problem for
$\Out(\mathbb{G}(\Sigma,I))$.

\section{Main results}\label{sec:main-results}

In this section we will present the main results of this paper the
proof of which are subject to the rest of the paper.  In order to
solve the word problem for $\Out(\mathbb{G}(\Sigma,I))$ in polynomial
time, we have to deal with compressed conjugacy problems in
$\mathbb{G}(\Sigma,I)$.  Recall that two elements $g$ and $h$ of a
group $G$ are {\em conjugated} if and only if there exists $x \in G$
such that $g = x h x^{-1}$.  The classical {\em conjugacy problem} for
$G$ asks, whether two given elements of $G$ are conjugated.  We will
consider a compressed variant of this problem in $\mathbb{G}(\Sigma,I)$,
which we call the \emph{compressed conjugacy problem for
  $\mathbb{G}(\Sigma,I)$}, $\SCCP(\Sigma,I)$ for short:

\medskip

\noindent
INPUT: SLPs $\dA$ and $\dB$ over $\Sigma^{\pm 1}$.\\
QUESTION: Are $\val(\dA)$ and $\val(\dB)$ conjugated in
$\mathbb{G}(\Sigma,I)$?

\begin{theorem}\label{decidesccp}
  Let $(\Sigma,I)$ be a fixed independence alphabet. Then,
  $\SCCP(\Sigma,I)$ can be solved in polynomial time.
\end{theorem}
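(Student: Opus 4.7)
The plan is to reduce compressed conjugacy to a compressed pattern-matching question on cyclically reduced trace normal forms, and then develop a polynomial-time pattern matching algorithm for SLP-compressed traces, which is the genuinely new ingredient.

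\textit{Step 1 (compressed cyclic reduction).} Using the techniques underlying the polynomial-time compressed word problem for graph groups from \cite{LoSchl07}, together with Hagenah's conversion from composition systems to SLPs \cite{Hag00}, I first replace $\dA$ and $\dB$ by SLPs whose values are freely reduced representatives in $\dM(\Sigma^{\pm 1}, I)$ of $\val(\dA), \val(\dB) \in \dG(\Sigma, I)$. I then iterate cyclic reduction: as long as some letter $a \in \Sigma^{\pm 1}$ satisfies $a \in \min(t)$ and $a^{-1} \in \max(t)$, I factor $t = a\, u\, a^{-1}$ as a trace and replace $t$ by $u$. Since $\min(t)$ and $\max(t)$ depend only on the outermost productions of the SLP and can be read off in polynomial time (using the routines for accessing individual positions and subwords of compressed strings), and each reduction strictly shortens the polynomially-bounded trace length, this loop finishes in polynomially many steps. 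This yields SLPs $\dA'$, $\dB'$ for cyclically reduced normal forms $u, v$.

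\textit{Step 2 (conjugacy via trace transposition).} By the Wrathall characterization based on \cite{LiuWraZeg90}, two cyclically reduced traces $u, v \in \dM(\Sigma^{\pm 1}, I)$ are conjugate in $\dG(\Sigma, I)$ if and only if $v$ is a \emph{trace transposition} of $u$, i.e.\ there exist $u_1, u_2 \in \dM(\Sigma^{\pm 1}, I)$ with $u = u_1 u_2$ and $v = u_2 u_1$. Equivalently, $v$ occurs as a length-$|u|$ trace factor of the doubled trace $u u$ beginning at a position that actually splits $u$ into a prefix and a suffix. Constructing an SLP for $uu$ from $\dA'$ is trivial, so the conjugacy test reduces to one compressed trace pattern-matching call with pattern $\dB'$ in the text $\dA'\dA'$, plus a valid-cut-point verification.

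\textit{Step 3 (compressed trace pattern matching) -- the main obstacle.} Given SLPs for traces $p$ (the pattern $v$) and $t$ (the text $uu$), I have to decide whether $p$ occurs as a trace factor of $t$. Lifshits' string algorithm \cite{Lif07} does not directly apply, because a trace factor can ``slide'' through independent letters, so the set of occurrences is no longer captured by a single arithmetic progression as in the string case. Inspired by the hierarchical MSC matching of \cite{GenMus08}, I process the grammar of $t$ bottom-up, attaching to every nonterminal $X$ a polynomial-size \emph{interface description}: for each prefix (resp.\ suffix) of $p$, it records under which commutation side-conditions that prefix fits as a suffix (resp.\ prefix) of $\val_\dA(X)$. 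Combining interfaces across a production $X \to YZ$ reduces to compressed prefix/suffix equality checks on $p$ and can be carried out in polynomial time using \cite{Pla94,Lif07}. The hard part is to prove that the interface data stays of polynomial size; this is where the structural lemmas about traces (connectedness of alphabets, dependence-graph decompositions) are needed. Once the trace matcher is in place, running it on $\dA'\dA'$ with pattern $\dB'$ and verifying the cut-point condition yields the claimed polynomial-time algorithm for $\SCCP(\Sigma, I)$.
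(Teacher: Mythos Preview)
Your Step~1 contains a genuine error that makes the whole argument collapse.  You write that ``each reduction strictly shortens the polynomially-bounded trace length, this loop finishes in polynomially many steps.''  But the trace length $|\val(\dA')|$ is \emph{not} polynomially bounded in $|\dA'|$; it can be exponential (this is the whole point of using SLPs).  For instance, an SLP of size $O(n)$ can produce $t=a^{2^n}\,b\,a^{-2^n}$, and your loop peels off one $a/a^{-1}$ pair per iteration, requiring $2^n$ iterations.  The paper avoids this entirely: it proves (Lemma~\ref{main core lemma}) that for $R$-irreducible $x$ one has $\CR(x)=\NF_R(d^{-1}xd)$ with $d=x\sqcap x^{-1}$, so the core is obtained by \emph{one} compressed infimum computation (Theorem~\ref{theo-compressed-inf}) followed by \emph{one} $R$-reduction (Theorem~\ref{R-reduction}), both polynomial.

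Your Step~3 is also a gap rather than a proof.  Recording, ``for each prefix (resp.\ suffix) of $p$,'' interface data is not obviously polynomial, since an SLP-compressed pattern has exponentially many prefixes; you acknowledge this (``the hard part is to prove that the interface data stays of polynomial size'') but do not resolve it.  The paper's matcher (Section~\ref{sec:connected}) takes a quite different route: it projects text and pattern onto every dependent pair $\{a,b\}$, where Lifshits' string algorithm gives a single arithmetic progression of occurrences, and then \emph{amalgamates} these progressions across $D$ (Lemma~\ref{lem:intersectap}, Propositions~\ref{prop:computesingle} and \ref{prop:computeperiodic}).  This works only under the assumption $\alp(\dP)=\alp(\dT)$ (so that the pattern alphabet is connected and meets every dependent pair), which is exactly what the Parikh-image equality in the conjugacy criterion guarantees.

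Finally, your Step~2 is not quite the reduction the paper uses.  The paper invokes the Liu--Wrathall--Zeger characterization (Theorem~\ref{conjugacymonoidgroup}): $u,v$ are conjugate in $\dG(\Sigma,I)$ iff $\CR(u)$ and $\CR(v)$ have the same Parikh image and $\CR(u)$ is a factor of $\CR(v)^k$ for some $k\le|\Sigma^{\pm1}|$.  This needs only a factor test, with no ``valid-cut-point verification''; your variant with $uu$ plus a cut-point check might be made to work, but you do not say how to carry out that verification on compressed inputs.
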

We will proof Theorem~\ref{decidesccp} in Section~\ref{sec:CC}.

In order to solve the word problem for $\Out(\mathbb{G}(\Sigma,I))$ in
polynomial time, Theorem~\ref{decidesccp} is not sufficient.  We need
an extension of $\SCCP(\Sigma,I)$ to several pairs of input SLPs. Let
us call this problem the {\em simultaneous compressed conjugacy
  problem} for $\mathbb{G}(\Sigma,I)$:

\medskip

\noindent
INPUT: SLPs $\dA_1,\dB_1,\dots,\dA_n,\dB_n$ over $\Sigma^{\pm 1}$.\\
QUESTION: Does there exist $x\in{(\Sigma^{\pm 1})}^*$ such that
$\val(\dA_i)=x\,\val(\dB_i)x^{-1}$ in $\mathbb{G}(\Sigma,I)$ for all
$i\in\{1,\dots,n\}$?  \medskip

\noindent
The simultaneous (non-compressed) conjugacy problem also appears in
connection with group-based cryptography \cite{MyShUs08}.
Unfortunately, we don't know, whether the simultaneous compressed
conjugacy problem can be solved in polynomial time.  But, in order to
deal with the word problem for $\Out(\mathbb{G}(\Sigma,I))$, a
restriction of this problem suffices, where the SLPs
$\dB_1,\dots,\dB_n$ from the simultaneous compressed conjugacy problem
are the letters from $\Sigma$. We call this problem the {\em
  restricted simultaneous compressed conjugacy problem}, briefly
$\RSCCP(\Sigma,I)$:

\medskip

\noindent
INPUT: SLPs $\dA_a$ ($a \in \Sigma$) over $\Sigma^{\pm 1}$.\\
QUESTION: Does there exist $x\in{(\Sigma^{\pm 1})}^*$ with
$\val(\dA_a)=xax^{-1}$ in $\mathbb{G}(\Sigma,I)$ for all $a\in\Sigma$?

\medskip

\noindent
An $x$ such that $\val(\dA_a)=xax^{-1}$ in $\mathbb{G}(\Sigma,I)$ for
all $a\in\Sigma$ is called a {\em solution} of the
$\RSCCP(\Sigma,I)$-instance. The following theorem will be shown in
Section~\ref{sec:RSSC}.

\begin{theorem}\label{decide_ccp}
  Let $(\Sigma,I)$ be a fixed independence alphabet.  Then,
  $\RSCCP(\Sigma,I)$ can be solved in polynomial time.  Moreover, in
  case a solution exists, one can compute an SLP for a solution in
  polynomial time.
\end{theorem}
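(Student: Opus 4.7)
The strategy is to reduce $\RSCCP(\Sigma,I)$ to polynomially many applications of Theorem~\ref{decidesccp} plus a coset-intersection computation inside $\dG(\Sigma,I)$. First, for each $a\in\Sigma$, I would invoke Theorem~\ref{decidesccp} (with the trivial SLP for the single letter $a$) to test whether $\val(\dA_a)$ is conjugate to $a$ in $\dG(\Sigma,I)$; if some test fails, output ``no''. Otherwise, a refinement of the $\SCCP$ algorithm---or a separate cyclic-reduction routine for SLP-compressed graph-group elements, using composition systems converted back to SLPs via Hagenah's theorem---yields in polynomial time an SLP representing a witness conjugator $x_a$ with $x_a\,a\,x_a^{-1}=\val(\dA_a)$ in $\dG(\Sigma,I)$.

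With the $x_a$ in hand, the solution set of the $\RSCCP$-instance is exactly the coset intersection
\[
  \bigcap_{a\in\Sigma} x_a\cdot C(a),
\]
where $C(a)=\langle\{a\}\cup I(a)\rangle$ is the centralizer of $a$ in $\dG(\Sigma,I)$ and is itself a graph group on the clique $\{a\}\cup I(a)$. Two facts from trace theory make the intersection tractable: $\langle\Gamma_1\rangle\cap\langle\Gamma_2\rangle=\langle\Gamma_1\cap\Gamma_2\rangle$ for all $\Gamma_1,\Gamma_2\subseteq\Sigma$, so intersections of such clique subgroups are again clique subgroups; and membership $g\in\langle\Gamma\rangle$ holds precisely when every letter occurring in the trace normal form of $g$ lies in $\Gamma$. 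Both facts translate to polynomial-time procedures on SLP-compressed inputs via the compressed-trace machinery developed earlier.

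I would then construct a witness iteratively. Fix an ordering $a_1,\dots,a_n$ of $\Sigma$. Maintain an SLP-encoded element $y_k$ together with a clique subgroup $H_k=\bigcap_{i\leq k}C(a_i)$, so that the set of solutions of the first $k$ constraints equals the coset $y_k\cdot H_k$. Incorporating the $(k{+}1)$-st constraint amounts to deciding whether $y_k^{-1}x_{a_{k+1}}\in H_k\cdot C(a_{k+1})$ and, if so, reading off an $h\in H_k$ from a factorization witness, whence $y_{k+1}:=y_k h$. This factorization test is a coset-membership question inside the graph group on $\{a_{k+1}\}\cup I(a_{k+1})\cup\Gamma_k$, where $H_k=\langle\Gamma_k\rangle$, and is again handled by the compressed-trace algorithms.

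The main obstacle is keeping the SLPs polynomially sized across all iterations and carrying out the factorization in $H_k\cdot C(a_{k+1})$---which is in general not a subgroup---by reading it off from the Foata-style normal form of the compressed element. Expressing intermediate data as composition systems, converting to genuine SLPs once per round via Hagenah's theorem, and reusing the polynomial-time primitives of the paper (Plandowski/Lifshits equality tests, compressed pattern matching, compressed normal-form reading on traces) keep every intermediate object polynomially sized and every subproblem polynomially solvable. The final $y_n$ is then an SLP-witness for the desired solution, yielding both parts of Theorem~\ref{decide_ccp}.
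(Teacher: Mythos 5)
Your approach is genuinely different from the paper's and, as written, it has real gaps. The paper does not go through conjugacy testing plus coset intersection at all: it $R$-reduces each $\dA_a$ (Theorem~\ref{R-reduction}), checks via Lemma~\ref{slp_is_cone} that each $[\val(\dA_a)]_I$ is a double $a$-cone $v_a a v_a^{-1}$ (rejecting otherwise by Lemma~\ref{lemma-double-a-cone}), and then invokes Lemma~\ref{condition_if_conjugating_then_supremum}: a simultaneous conjugator exists if and only if the trace supremum $\bigsqcup_{a\in\Sigma} v_a$ exists, in which case that supremum \emph{is} a solution. The existence of the supremum and an SLP for it are obtained from Corollary~\ref{decide_compressed_supremum}. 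This sidesteps coset intersections entirely; all the group-theoretic content of your ``intersection of the cosets $x_aC(a)$'' step is packed into that one combinatorial lemma about prefixes and suprema. Your first step is repairable along these lines: invoking Theorem~\ref{decidesccp} is both overkill and insufficient (it is a pure decision procedure and does not return a conjugator), but for conjugation to a single generator the double-$a$-cone decomposition of Lemma~\ref{slp_is_cone} hands you the witness $x_a=v_a$ directly. Also, $\{a\}\cup I(a)$ is the \emph{star} of $a$, not a clique, so $C(a)$ is $\langle a\rangle\times\dG(I(a),I)$ rather than an abelian clique subgroup; this is harmless for your argument, since the facts you use (intersections of special subgroups, membership via the alphabet of the normal form) hold for arbitrary special subgroups.

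The genuine gap is the step you yourself flag as ``the main obstacle'' and then do not overcome: deciding, on SLP-compressed input, whether $y_k^{-1}x_{a_{k+1}}\in H_k\cdot C(a_{k+1})$ and extracting a factorization witness $h\in H_k$. Membership in a \emph{single} special subgroup reduces to checking the alphabet of $\NF_R$, which the paper's machinery supports; but membership in a \emph{product} of two special subgroups amounts to deciding whether the $R$-irreducible trace $w=\NF_R(y_k^{-1}x_{a_{k+1}})$ factors as $w=uv$ with $\alp(u)\subseteq\Gamma_k$ and $\alp(v)\subseteq\{a_{k+1}\}\cup I(a_{k+1})$, equivalently computing the downward closure in the dependence graph of $w$ of all positions labelled outside $\{a_{k+1}\}\cup I(a_{k+1})$ and checking its alphabet. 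No primitive in the paper (infima, differences, suprema, projections, pattern matching) delivers this compressed factorization, and ``reading it off from the Foata-style normal form'' is not an algorithm on SLPs. Until you either prove such a compressed product-membership routine or replace the coset-intersection step by a combinatorial criterion (as the paper does with the supremum of the $v_a$), the proof is incomplete at its central step.
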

Using Theorem~\ref{decide_ccp}, it is straightforward to decide the
word problem for $\Out(\mathbb{G}(\Sigma,I))$ in polynomial time.

\begin{theorem} \label{thm:outer}
  Let $(\Sigma,I)$ be a fixed independence alphabet. Then, the word
  problem for the group $\Out(\mathbb{G}(\Sigma,I))$ can be solved in
  polynomial time.
\end{theorem}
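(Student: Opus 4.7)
The plan is to reduce the word problem for $\Out(\mathbb{G}(\Sigma,I))$ to $\RSCCP(\Sigma,I)$ in polynomial time, and then invoke Theorem~\ref{decide_ccp}. Fix a finite monoid generating set $\Psi = \{\psi_1, \ldots, \psi_k\}$ of $\Aut(\mathbb{G}(\Sigma,I))$, which exists by Laurence's theorem. The input is a word $w = \psi_{i_1} \cdots \psi_{i_n} \in \Psi^*$ representing the automorphism $\varphi_w := \psi_{i_1} \circ \cdots \circ \psi_{i_n}$, and we must decide whether $\varphi_w \in \Inn(\mathbb{G}(\Sigma,I))$. Because a homomorphism is determined by its action on generators, $\varphi_w$ is inner if and only if there exists $x \in \mathbb{G}(\Sigma,I)$ with $\varphi_w(a) = x a x^{-1}$ for every $a \in \Sigma$, i.e.\ exactly an instance of $\RSCCP(\Sigma,I)$ once we have SLPs $\dA_a$ with $\val(\dA_a) = \varphi_w(a)$ in $\mathbb{G}(\Sigma,I)$.

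The second step is to produce these SLPs using the standard construction from \cite{LoSchl07}. Each fixed generator $\psi_i \in \Psi$ is specified by words $u_{i,a} \in (\Sigma^{\pm 1})^*$ such that $\psi_i(a) = u_{i,a}$ for each $a \in \Sigma^{\pm 1}$; the length of each $u_{i,a}$ is bounded by a constant depending only on $\Psi$. For $j \in \{0, \ldots, n\}$ and $a \in \Sigma^{\pm 1}$ I introduce a nonterminal $A_{j,a}$ meant to generate a representative of $(\psi_{i_1} \circ \cdots \circ \psi_{i_j})(a)$. Set $A_{0,a} \to a$, and for $j \geq 1$ with $\psi_{i_j}(a) = b_1 b_2 \cdots b_m$ set
\[
A_{j,a} \to A_{j-1,b_1} A_{j-1,b_2} \cdots A_{j-1,b_m}.
\]
Taking $A_{n,a}$ as the initial nonterminal of $\dA_a$ yields an SLP over $\Sigma^{\pm 1}$ with $\val(\dA_a) = \varphi_w(a)$ in $\mathbb{G}(\Sigma,I)$. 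The grammar is stratified by $j$, so there are no cycles, and its total size is $O(n \cdot |\Sigma|)$ up to the constant $\max_{i,a}|u_{i,a}|$, hence polynomial in $n$.

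The third step is to apply the polynomial-time algorithm of Theorem~\ref{decide_ccp} to the family $\{\dA_a\}_{a \in \Sigma}$. It accepts exactly when a common conjugator $x$ exists, which by the first step is equivalent to $\varphi_w$ being inner, and hence to $w = 1$ in $\Out(\mathbb{G}(\Sigma,I))$. Composing the polynomial-time grammar construction with the polynomial-time $\RSCCP$ decision procedure gives the claimed polynomial-time algorithm.

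The main obstacle is not in this reduction, which is essentially bookkeeping, but in the work already packaged in Theorem~\ref{decide_ccp}. The only facts to verify here are that each $A_{j,a}$ is well-defined (immediate from the stratification), that $\val(\dA_a)$ really equals $\varphi_w(a)$ in the group (a straightforward induction on $j$, using that $\psi_{i_j}$ is a homomorphism so $\psi_{i_j}(a) = b_1 \cdots b_m$ implies $\varphi_w(a) = \Phi_{j-1}(b_1) \cdots \Phi_{j-1}(b_m)$ with $\Phi_{j-1} = \psi_{i_1} \circ \cdots \circ \psi_{i_{j-1}}$), and that the construction is polynomial (by the bounded size of each right-hand side). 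The correctness of the overall decision follows from the standard equivalence between innerness of an automorphism and simultaneous conjugacy of the generator images, combined with Theorem~\ref{decide_ccp}.
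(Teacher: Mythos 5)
Your proposal is correct and follows essentially the same route as the paper: reduce innerness of the composed automorphism to the restricted simultaneous compressed conjugacy problem by building SLPs for the images of the generators, and then invoke Theorem~\ref{decide_ccp}. The only difference is that you spell out the stratified SLP construction for $\varphi_w(a)$ explicitly, whereas the paper simply cites \cite{Schl06} for it; the construction you give is the standard one and is carried out correctly.
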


\begin{proof}
  Fix a finite monoid generating set $\Phi$ for
  $\Aut(\mathbb{G}(\Sigma,I))$.  Let $\varphi=\varphi_1\cdots
  \varphi_n$ with $\varphi_1,\ldots, \varphi_n \in \Phi$ be the
  input. By \cite{Schl06} we can compute in polynomial time SLPs
  $\dA_a$ ($a \in \Sigma$) over $\Sigma^{\pm 1}$ with
  $\val(\dA_a)=\varphi(a)$ in $\mathbb{G}(\Sigma,I)$ for all $a \in
  \Sigma$.  The automorphism $\varphi$ is inner if and only if there
  exists $x$ such that $\val(\dA_a)=x a x^{-1}$ in
  $\mathbb{G}(\Sigma,I)$ for all $a \in \Sigma$.  This can be decided
  in polynomial time by Theorem~\ref{decide_ccp}.  \qed
\end{proof}
It is important in Theorem~\ref{decidesccp}--\ref{thm:outer}
that we fix the independence alphabet $(\Sigma,I)$. It is open
whether these results also hold if $(\Sigma,I)$ is part of
the input.

\section{Simple facts for traces}

In this section, we state some simple facts on the prefix order of
trace monoids, which will be needed later.  A trace $u$ is said to be
a \emph{prefix} of a trace $w$ if there exists a trace $v$ such that
$uv = w$ and we denote this fact by $u\preceq w$.  The prefixes of a
trace $w$ correspond to the downward-closed node sets of the
dependence graph of $w$. Analogously a trace $v$ is a \emph{suffix} of
a trace $w$ if there is a trace $u$ such that $uv=w$.  For two traces
$u,v \in \mathbb{M}(\Sigma,I)$, the \emph{infimum} $u \sqcap v$ is the
largest trace $s$ with respect to $\preceq$ such that $s \preceq u$ and $s
\preceq v$; it always exists \cite{CoMeZi93}.  With $u \setminus v$ we
denote the unique trace $t$ such that $u = (u \sqcap v) t$; uniqueness
follows from the fact that $\mathbb{M}(\Sigma,I)$ is cancellative.
Note that $u \setminus v = u \setminus (u \sqcap v)$.  

The \emph{supremum} $u \sqcup v$ of two traces $u,v \in
\mathbb{M}(\Sigma,I)$ is the smallest trace $s$ with respect to $\preceq$ such
that $u\preceq s$ and $v\preceq s$ if any such trace exists. The
following result can be found in \cite{CoMeZi93}:

\begin{lemma}[\cite{CoMeZi93}] \label{lemma-supremum-exists} The trace
  $u\sqcup v$ exists if and only if $(u\setminus v)\, I\, ( v\setminus
  u)$, in which case we have $u\sqcup v=(u\sqcap v)\,(u\setminus v)\,
  ( v\setminus u)$.
\end{lemma}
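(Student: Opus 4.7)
The plan is to reason about traces via their dependence graphs, since prefixes of a trace $c$ correspond bijectively to the downward-closed vertex sets of $D_c$, any linear extension of such a set yielding a representing word. Two elementary facts will do the heavy lifting. First, (F1): in any DAG, two disjoint downward-closed vertex sets $A,B$ cannot be connected by an edge in either direction, since the source of an edge with target in $B$ must already lie in $B$ by downward-closedness, and symmetrically for $A$. Second, (F2): if $\alpha \preceq c$ and $\beta \preceq c$ with $\alpha\, I\, \beta$, then $\alpha\beta \preceq c$. To see (F2), the corresponding downward-closed sets $A,B \subseteq D_c$ are disjoint (because $I$ is irreflexive forces $\alp(\alpha) \cap \alp(\beta) = \emptyset$), and by (F1) carry no mutual edges; then $A \cup B$ is downward-closed and admits a topological order listing all of $A$ before $B \setminus A = B$, so the trace of $A \cup B$ equals $\alpha\beta$.

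For the implication ``$\Leftarrow$'', assume $(u \setminus v)\, I\, (v \setminus u)$ and set $s = (u \sqcap v)(u \setminus v)(v \setminus u)$. The commutativity of $u \setminus v$ and $v \setminus u$ yields $s = u\,(v \setminus u) = v\,(u \setminus v)$, so $s$ is a common upper bound of $u$ and $v$. For any other common upper bound $w$, cancellativity of $\dM(\Sigma,I)$ produces a trace $c$ with $w = (u \sqcap v)\,c$, and both $u \setminus v$ and $v \setminus u$ are prefixes of $c$; applying (F2) inside $c$ gives $(u \setminus v)(v \setminus u) \preceq c$, hence $s \preceq w$. So $s$ is indeed the supremum, and the formula in the statement is verified.

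For ``$\Rightarrow$'', suppose $u \sqcup v$ exists and write it as $w = (u \sqcap v)\,c$. With $\alpha = u \setminus v$ and $\beta = v \setminus u$, both are prefixes of $c$ and correspond to downward-closed sets $A,B$ of $D_c$. The set $A \cap B$ is downward-closed and yields a trace $\gamma$ that is a common prefix of $\alpha$ and $\beta$; then $(u \sqcap v)\,\gamma$ is a common prefix of $u$ and $v$, forcing $\gamma = \varepsilon$ by maximality of $u \sqcap v$, so $A \cap B = \emptyset$. By (F1), no edges of $D_c$ connect $A$ and $B$, which translates directly to $(w[i], w[j]) \in I$ for every $i \in A$ and $j \in B$, i.e. $\alp(\alpha) \times \alp(\beta) \subseteq I$, which is exactly $(u \setminus v)\, I\, (v \setminus u)$. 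I expect (F2) to be the technical pressure point: one has to verify that the trace carried by $A \cup B$ truly equals $\alpha\beta$, which hinges on picking a topological extension of $D_c$ that exhausts $A$ before touching $B$ — legal precisely because of disjointness and the absence of cross-edges guaranteed by (F1).
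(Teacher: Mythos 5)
The paper does not actually prove this lemma: it is imported verbatim from \cite{CoMeZi93}, so there is no in-paper argument to compare against. Your proof is correct and self-contained. It rests on the correspondence between prefixes of a trace and downward-closed node sets of its dependence graph (a fact the paper itself asserts just before the lemma), and your two auxiliary facts (F1) and (F2) are sound: disjoint downward-closed sets carry no cross-edges, and the union of two such sets with independent label alphabets admits a topological order exhausting one before the other, so its trace is the concatenation. The reduction of both $u\setminus v$ and $v\setminus u$ to prefixes of $c$ (where $w=(u\sqcap v)\,c$) is justified by cancellativity, and the step forcing $\gamma=\varepsilon$ correctly invokes the maximality of $u\sqcap v$ among common prefixes. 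Two points you leave implicit are standard but worth being aware of: that the prefix/downward-closed-set correspondence is compatible with passing to induced subgraphs (needed both to see that the trace of $A\cap B$ is a common prefix of $\alpha$ and $\beta$, and that the trace of $A\cup B$ is $\alpha\beta$ rather than some other interleaving), and in the forward direction the labels you compare should be those of $c$ rather than of $w$ (a harmless index shift). Note also that your forward direction proves something slightly stronger than stated, namely that the mere existence of a common upper bound already forces $(u\setminus v)\,I\,(v\setminus u)$; combined with the backward direction this cleanly yields the equivalence. An alternative proof via Levi's Lemma (Lemma~\ref{levi_lemma}) is possible but less direct, since Levi's Lemma only produces some common prefix $x$ of $u$ and $v$ rather than $u\sqcap v$ itself; your dependence-graph argument avoids that complication.
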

We can define the supremum of several traces $w_1,\ldots, w_n$
analogously by induction: $w_1\sqcup\dots \sqcup w_n=(w_1\sqcup \dots
\sqcup w_{n-1})\sqcup w_n$. We mention a necessary and sufficient
condition for the existence of the supremum of several traces that
follows directly from the definition.

\begin{lemma}\label{trace_supremum}
  Let $(\Sigma,I)$ be an independence alphabet and
  $u_1,\dots,u_r\in\dM(\Sigma,I)$. If $ u=u_1\sqcup\dots\sqcup
  u_{r-1}$ exists then $s=u_1\sqcup\dots\sqcup u_r$ is exists if and
  only if $\left( u\setminus u_r \right)\;I\;\left( u_r\setminus
    u\right)$.  In this case $s= u\;\left( u_r\setminus u\right)$.
\end{lemma}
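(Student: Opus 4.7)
The plan is to derive this statement as a direct consequence of Lemma~\ref{lemma-supremum-exists} together with the inductive definition of the iterated supremum. By definition, $s = u_1 \sqcup \dots \sqcup u_r = (u_1 \sqcup \dots \sqcup u_{r-1}) \sqcup u_r = u \sqcup u_r$, so the existence question and the formula for $s$ reduce entirely to the binary case applied to $u$ and $u_r$.

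Concretely, I would first invoke Lemma~\ref{lemma-supremum-exists} for the two traces $u$ and $u_r$: it says $u \sqcup u_r$ exists if and only if $(u \setminus u_r) \, I \, (u_r \setminus u)$, which yields the stated criterion. Second, whenever the supremum exists, the same lemma provides the formula
\[
s \;=\; (u \sqcap u_r)\,(u \setminus u_r)\,(u_r \setminus u).
\]
Third, I would use the very definition of the difference operation: $(u \sqcap u_r)(u \setminus u_r) = u$, since $u \setminus u_r$ is defined as the unique trace $t$ with $u = (u \sqcap u_r)\,t$ (and $\dM(\Sigma,I)$ is cancellative). Substituting this identity into the expression for $s$ immediately yields $s = u \,(u_r \setminus u)$, which is the claimed formula.

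There is essentially no obstacle here beyond careful unfolding of definitions; the content of the lemma is that, once all previous suprema exist and are collapsed into a single trace $u$, the inductive step is governed by exactly the binary rule of Lemma~\ref{lemma-supremum-exists}. The only minor point worth spelling out is that no additional hypothesis is needed to rewrite $(u \sqcap u_r)(u \setminus u_r)$ as $u$, which follows purely from the definition of $u \setminus u_r$ and cancellativity of $\dM(\Sigma,I)$.
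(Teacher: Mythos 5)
Your proposal is correct and matches the paper's intent: the paper gives no explicit proof, stating only that the lemma ``follows directly from the definition,'' and your argument---unfolding the inductive definition to reduce to the binary case, applying Lemma~\ref{lemma-supremum-exists}, and using $(u \sqcap u_r)(u \setminus u_r) = u$ to simplify the formula---is exactly that direct derivation. No gaps.
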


\begin{example} \label{Ex traces1} We consider the following
  independence alphabet $(\Sigma,I)$:
  \begin{center}
    \setlength{\unitlength}{1.2mm}
    \begin{picture}(18,9)(0,-5)
      \gasset{Nadjust=wh,Nadjustdist=0.5,Nfill=n,Nframe=n,AHnb=0,linewidth=.1}
      \node(a)(0,3){$c$}
      \node(b)(9,3){$a$}
      \node(d)(4.5,-3){$e$}
      \node(c)(13.5,-3){$d$}
      \node(e)(22.5,-3){$b$}
      \drawedge(a,b){}
      \drawedge(b,c){}
      \drawedge(d,c){}
      \drawedge(e,c){}
      \drawedge(b,d){}
    \end{picture}
  \end{center}
  Then the corresponding dependence alphabet is:
  \begin{center}
    \setlength{\unitlength}{1.2mm}
    \begin{picture}(18,9)(0,-5)
      \gasset{Nadjust=wh,Nadjustdist=0.5,Nfill=n,Nframe=n,AHnb=0,linewidth=.1}
      \node(b)(0,3){$a$}
      \node(d)(9,3){$e$}
      \node(e)(4.5,-3){$b$}
      \node(a)(13.5,-3){$c$}
      \node(c)(22.5,-3){$d$}
      \drawedge(e,b){}
      \drawedge(e,a){}
      \drawedge(a,c){}
      \drawedge(a,d){}
      \drawedge(e,d){}
    \end{picture}
  \end{center}
  We consider the words $u=aeadbacdd$ and $v=eaabdcaeb$. Then the
  dependence graphs $D_u$ of $u$ and $D_v$ of $v$ look as follows
  (where we label the vertices $i$ with the letter $u[i]$
  (resp. $v[i]$)):
  \begin{center}
    \setlength{\unitlength}{1.2mm}
    \begin{picture}(78,15)(0,-10)
      \gasset{Nadjust=wh,Nadjustdist=0.5,Nfill=n,Nframe=n,AHnb=1,linewidth=.1}
      \put(-10,-3){$D_u$}
      \node(b1)(0,3){$a$}
      \node(d1)(0,-3){$e$}
      \node(c1)(0,-9){$d$}
      \node(b2)(9,3){$a$}
      \node(e1)(13.5,-3){$b$}
      \node(b3)(20.5,-3){$a$}
      \node(a1)(18,-9){$c$}
      \node(c2)(24.5,-9){$d$}
      \node(c3)(31,-9){$d$}
      \drawedge(b1,b2){}
      \drawedge(d1,e1){}
      \drawedge(b2,e1){}
      \drawedge(c1,a1){}
      \drawedge(e1,a1){}
      \drawedge(e1,b3){}
      \drawedge(a1,c2){}
      \drawedge(c2,c3){}
      \put(40,-3){$D_v$}
      \node(b1')(50,3){$a$}
      \node(d1')(50,-3){$e$}
      \node(c1')(50,-9){$d$}
      \node(b2')(59,3){$a$}
      \node(e1')(63.5,-3){$b$}
      \node(b3')(71.5,-3){$a$}
      \node(a1')(68,-9){$c$}
      \node(e2')(79.5,-3){$b$}
      \node(d2')(75.5,-9){$e$}
      \drawedge(b1',b2'){}
      \drawedge(d1',e1'){}
      \drawedge(b2',e1'){}
      \drawedge(c1',a1'){}
      \drawedge(e1',a1'){}
      \drawedge(e1',b3'){}
      \drawedge(b3',e2'){}
      \drawedge(a1',d2'){}
      \drawedge(d2',e2'){}
    \end{picture}
  \end{center}
  Then we have $u\sqcap v=aeadbac=:p$ and its dependence graph is:

  \begin{center}
    \setlength{\unitlength}{1.2mm}
    \begin{picture}(28,15)(0,-10)
      \gasset{Nadjust=wh,Nadjustdist=0.5,Nfill=n,Nframe=n,AHnb=1,linewidth=.1}
      \put(-15,-3){$D_p$}
      \node(b1)(0,3){$a$}
      \node(d1)(0,-3){$e$}
      \node(c1)(0,-9){$d$}
      \node(b2)(9,3){$a$}
      \node(e1)(13.5,-3){$b$}
      \node(b3)(20.5,-3){$a$}
      \node(a1)(18,-9){$c$}
      \drawedge(b1,b2){}
      \drawedge(d1,e1){}
      \drawedge(b2,e1){}
      \drawedge(c1,a1){}
      \drawedge(e1,a1){}
      \drawedge(e1,b3){}
    \end{picture}
  \end{center}
  Since $u\setminus p= dd$ and $v\setminus p=eb$ we have $(u\setminus
  p) I (v\setminus p)$ and hence the supremum
  $s=u\sqcup v= aeadbacddeb$ is defined. The dependence graph for $s$ is:\\
  \vspace{1cm}
  \begin{center}
    \setlength{\unitlength}{1.2mm}
    \begin{picture}(28,15)(0,-17)
      \gasset{Nadjust=wh,Nadjustdist=0.5,Nfill=n,Nframe=n,AHnb=1,linewidth=.1}
      
      \put(-15,-3){$D_s$}
      
      \node(b1')(0,3){$a$}
      \node(d1')(0,-3){$e$}
      \node(c1')(0,-9){$d$}
      \node(b2')(9,3){$a$}
      \node(e1')(13.5,-3){$b$}
      \node(b3')(21.5,-3){$a$}
      \node(a1')(18,-9){$c$}
      \node(e2')(29.5,-3){$b$}
      \node(d2')(25.5,-9){$e$}
      \node(c2')(21.5,-15){$d$}
      \node(c3')(27.5,-15){$d$}

      \drawedge(b1',b2'){}
      \drawedge(d1',e1'){}
      \drawedge(b2',e1'){}
      \drawedge(c1',a1'){}
      \drawedge(e1',a1'){}
      \drawedge(e1',b3'){}
      \drawedge(b3',e2'){}
      \drawedge(a1',d2'){}
      \drawedge(d2',e2'){}
      \drawedge(a1',c2'){}
      \drawedge(c2',c3'){}
    \end{picture}
  \end{center}
\end{example}
The following lemma is a basic statement for traces, see for example
\cite{DieRoz95}:

\begin{lemma}[Levi's Lemma]\label{levi_lemma}
  Let $u_1,u_2,v_1,v_2$ be traces such that $u_1u_2=v_1v_2$. Then
  there exist traces $x,y_1,y_2,z$ such that $y_1Iy_2$ and $u_1=xy_1$,
  $u_2=y_2z$, $v_1=xy_2$, and $v_2=y_1z$.
\end{lemma}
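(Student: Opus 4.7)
The plan is to work with dependence graphs. Since $u_1 u_2$ and $v_1 v_2$ represent the same trace, there is a well-defined common dependence graph $D = D_{u_1 u_2} \cong D_{v_1 v_2}$ of the trace $w := u_1 u_2 = v_1 v_2$. Each factorization of $w$ into a prefix and a suffix corresponds bijectively to a downward-closed vertex set of $D$. Let $U$ be the downward-closed set giving the prefix $u_1$, and $V$ the downward-closed set giving $v_1$.

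First I would decompose $V(D)$ into the four disjoint pieces
\[
U \cap V,\qquad U \setminus V,\qquad V \setminus U,\qquad V(D)\setminus (U \cup V),
\]
and let $x$, $y_1$, $y_2$, $z$ be the traces represented by the induced subgraphs on these four sets (with the induced edge relation). Intersections, relative complements, and complements of downward-closed sets are again downward-closed in the appropriate induced subgraph, so the nested downward-closed sets $U\cap V \subseteq U \subseteq U\cup V \subseteq V(D)$ (and analogously with $V$) yield valid successive prefix factorizations $u_1 = xy_1$, $v_1 = xy_2$, $u_2 = y_2 z$, and $v_2 = y_1 z$.

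The only nontrivial step, which I expect to be the sole obstacle, is the independence $y_1\, I\, y_2$. Suppose, for contradiction, that some $p \in U \setminus V$ and $q \in V \setminus U$ were joined by an edge in $D$. If the edge is directed from $p$ to $q$, then $q \in V$ together with downward-closure of $V$ would force $p \in V$, contradicting $p \notin V$; symmetrically, an edge from $q$ to $p$ contradicts downward-closure of $U$. Hence no edge of $D$ connects $U \setminus V$ to $V \setminus U$, which precisely means $\alp(y_1)\times\alp(y_2)\subseteq I$, i.e., $y_1\, I\, y_2$. The identity $u_1 u_2 = v_1 v_2$ then collapses to the single commutation $xy_1 y_2 z = xy_2 y_1 z$ permitted by this independence, and the lemma follows.
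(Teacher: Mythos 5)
Your proof is correct. Note, however, that the paper does not actually prove Levi's Lemma: it states it as ``a basic statement for traces'' and cites the literature (\cite{DieRoz95}), so there is no in-paper argument to compare against. What you have written is precisely the standard textbook proof via dependence graphs, and every step is sound: the correspondence between factorizations $w=pv$ and downward-closed vertex sets is a bijection (for each letter $c$ the $c$-labelled vertices form a chain, so the set is recovered from the Parikh image of $p$); the nested downward-closed sets $U\cap V\subseteq U\subseteq U\cup V$ and $U\cap V\subseteq V\subseteq U\cup V$ yield the four factorizations $u_1=xy_1$, $v_1=xy_2$, $u_2=y_2z$, $v_2=y_1z$; and the absence of edges between $U\setminus V$ and $V\setminus U$ (forced in either direction by downward-closure of $V$ resp.\ $U$) gives $\alp(y_1)\times\alp(y_2)\subseteq I$. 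One phrase is slightly loose --- relative complements of downward-closed sets are not themselves downward-closed in $D$; what you actually use (and what is true) is that $U\cap V$ is downward-closed inside $U$ and that $U\setminus V$ is downward-closed inside the complement of $V$ --- but this does not affect the validity of the argument.
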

We use Levi's Lemma to prove the following statement:

\begin{lemma}\label{unique_decomposition}
  Let $a\in \Sigma$. The decomposition of a trace $t\in\dM(\Sigma,I)$
  as $t=u_1u_2$ with $u_2Ia$ and $|u_2|$ maximal is unique in
  $\dM(\Sigma,I)$.
\end{lemma}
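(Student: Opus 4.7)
The plan is to apply Levi's Lemma to two hypothetical maximal decompositions and then combine them into a single larger decomposition, using the independence with $a$ to force both decompositions to coincide.

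More concretely, suppose $t = u_1 u_2 = v_1 v_2$ are two decompositions with $u_2 \, I \, a$ and $v_2 \, I \, a$, both of maximal length. Maximality already gives $|u_2| = |v_2|$. I then invoke Lemma~\ref{levi_lemma} (Levi's Lemma) to obtain traces $x, y_1, y_2, z$ with $y_1 \, I \, y_2$ such that $u_1 = x y_1$, $u_2 = y_2 z$, $v_1 = x y_2$, $v_2 = y_1 z$. Since $u_2$ and $v_2$ have the same length, $|y_1| = |y_2|$.

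The key observation is that the letters occurring in $y_1$, $y_2$ and $z$ are all independent from $a$: $y_2$ and $z$ are factors of $u_2$, and $y_1$ is a prefix of $v_2$, and both $u_2$ and $v_2$ commute with $a$. Hence $y_1 y_2 z$ commutes with $a$. Using $y_1 \, I \, y_2$, we can rewrite
\[
t = x y_1 y_2 z = x (y_1 y_2 z),
\]
so $t = x \cdot w$ with $w = y_1 y_2 z$ a trace satisfying $w \, I \, a$. By the maximality of $|u_2| = |y_2 z|$, we must have $|y_1 y_2 z| \leq |y_2 z|$, forcing $y_1 = \varepsilon$ and hence also $y_2 = \varepsilon$ (since $|y_1| = |y_2|$). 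This yields $u_1 = x = v_1$ and $u_2 = z = v_2$ in $\mathbb{M}(\Sigma,I)$, proving uniqueness.

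I expect no serious obstacle; the main subtlety is simply noting that all four trace pieces produced by Levi's Lemma inherit independence from $a$, so that combining them via the commutation $y_1 \, I \, y_2$ produces a legitimate competitor to $u_2$ (and $v_2$) whose strictly greater length would contradict maximality. The cancellativity of $\mathbb{M}(\Sigma,I)$ (used implicitly throughout and recorded in the preliminaries) is what lets us identify the remaining pieces.
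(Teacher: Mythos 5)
Your proposal is correct and follows essentially the same route as the paper: apply Levi's Lemma to the two maximal decompositions, observe that $y_1$, $y_2$, $z$ inherit independence from $a$ as prefixes/factors of $u_2$ and $v_2$, and use maximality of $|u_2|$ against the competitor suffix $y_1y_2z$ to force $y_1=y_2=\varepsilon$. The paper's own proof is the same argument, phrased slightly more tersely.
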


\begin{proof}
  Let $u_1u_2=t=v_1v_2$ be such that $u_2Ia$, $v_2Ia$ and $|u_2|$ and
  $|v_2|$ are both maximal (hence $|u_2|=|v_2|$). By Levi's Lemma
  there are traces $x,y_1,y_2,z$ such that $y_1Iy_2$ and $u_1=xy_1$,
  $u_2=y_2z$, $v_1=xy_2$, and $v_2=y_1z$.  From $u_2Ia$ and $v_2Ia$ we
  get $y_1Ia$ and $y_2Ia$.  Maximality of $|u_2|=|v_2|$ and
  $xy_1u_2=t=xy_2v_2$ implies $y_1=y_2=\varepsilon$. Hence $u_1=v_1$
  and $u_2=v_2$.  \qed
\end{proof}
A \emph{trace rewriting system} $R$ over $\mathbb{M}(\Sigma,I)$ is
just a finite subset of $\mathbb{M}(\Sigma,I) \times
\mathbb{M}(\Sigma,I)$ \cite{Die90lncs}. We can define the
\emph{one-step rewrite relation} $\to_R \;\subseteq
\mathbb{M}(\Sigma,I) \times \mathbb{M}(\Sigma,I)$ by: $x \to_R y$ if
and only if there are $u,v \in \mathbb{M}(\Sigma,I)$ and $(\ell,r) \in
R$ such that $x = u\ell v$ and $y = u r v$. With $\xrightarrow{*}_R$
we denote the reflexive transitive closure of $\rightarrow_R$. The
notion of a confluent and terminating trace rewriting system is
defined as for other types of rewriting systems \cite{BoOt93}: A trace
rewriting system $R$ is called \emph{confluent} if for all $u,v,v'\in
\mathbb{M}(\Sigma,I)$ it holds that $u \xrightarrow{*}_R v$ and
$u\xrightarrow{*}_R v'$ imply that there is a trace $w$ with $v
\xrightarrow{*}_R w$ and $v'\xrightarrow{*}_R w$.  It is called
\emph{terminating} if there does not exist an infinite chain
$u_0\rightarrow_R u_1 \rightarrow_R u_2 \cdots$.  A trace $u$ is
\emph{$R$-irreducible} if no trace $v$ with $u \to_R v$ exists. The
set of all $R$-irreducible traces is denoted
with $\IRR(R)$. If $R$ is terminating and confluent, then for every
trace $u$, there exists a unique \emph{normal form} $\NF_R(u) \in
\IRR(R)$ such that $u \xrightarrow{*}_R \NF_R(u)$. 

Let us now work in the trace monoid $\dM(\Sigma^{\pm 1}, I)$.
For a trace $u=[a_1\cdots a_n]_I\in\dM(\Sigma^{\pm 1},I)$ 
we denote with $u^{-1}$ the trace
$u^{-1}=[a_n^{-1}\cdots a_1^{-1}]_I$.  It is easy to see that this
definition is independent of the chosen representative $a_1\cdots a_n$
of the trace $u$.  It follows that we have
$[\val(\dA)]_I^{-1}=[\val(\dA^{-1})]_I$ for an SLP $\dA$.  For the
rest of the paper, we fix the trace rewriting system
$$
R=\{([aa^{-1}]_I,[\epsilon]_I)\mid  a\in \Sigma^{\pm 1}\}
$$
over the trace monoid $\dM(\Sigma^{\pm 1},I)$.  Since $R$ is
length-reducing, $R$ is terminating.  By \cite{Die90lncs,Wra88}, $R$
is also confluent.  For traces $u,v \in \dM(\Sigma^{\pm 1},I)$ we have
$u=v$ in $\dG(\Sigma,I)$ if and only if $\NF_R(u)=\NF_R(v)$.  Using
these facts, it was shown in \cite{Die90lncs,Wra88} that the word
problem for $\dG(\Sigma,I)$ can be solved in linear time (on the RAM
model).

\section{Algorithms for compressed traces}

In this section, we will recall some results from \cite{LoSchl07}
concerning traces, which are represented by SLPs.  For SLPs $\dA$ and
$\dB$ over $\Sigma^{\pm 1}$ we say that $\dB$ is an
\emph{$R$-reduction} of $\dA$ if $[\val(\dB)]_I=\NF_R([\val(\dA)]_I)$.
We will need the following theorem.

\begin{theorem}[\cite{LoSchl07}]\label{R-reduction}
  Let $\dA$ be an SLP over $\Sigma^{\pm 1}$ representing a trace in
  $\dM(\Sigma^{\pm 1},I)$. We can compute an $R$-reduction for $\dA$
  in polynomial time.
\end{theorem}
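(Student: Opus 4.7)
The plan is to process $\dA$ bottom-up after converting it to Chomsky normal form, computing inductively for each nonterminal $X$ an SLP $\dA_X$ whose value, viewed as a trace in $\dM(\Sigma^{\pm 1}, I)$, equals $\NF_R([\val_\dA(X)]_I)$. Terminal productions $X \to a$ are immediate, so the proof effectively reduces to the inductive step $X \to YZ$: given SLPs $\dA_Y, \dA_Z$ already representing normal forms, assemble an SLP $\dA_X$ for the normal form of their concatenation. Since there are $O(|\dA|)$ nonterminals, each inductive step must enlarge the SLP by at most a polynomial factor.

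The inductive step reduces to the following \emph{normalized concatenation} subproblem: given SLPs $\dC, \dD$ whose values are $R$-irreducible traces $u, v \in \IRR(R)$, compute an SLP representing $\NF_R(uv)$. The structural fact I would lean on is that there exists a unique factorization $u = p c$, $v = c^{-1} r$ in $\dM(\Sigma^{\pm 1}, I)$ with $p r \in \IRR(R)$, and then $\NF_R(uv) = p r$. Existence follows by taking $c$ maximal among traces that are simultaneously a suffix of $u$ and the inverse of a prefix of $v$; any failure of $pr$ to be $R$-irreducible would, by Levi's Lemma~\ref{levi_lemma}, produce a longer cancelling piece and contradict maximality, while uniqueness is in the spirit of Lemma~\ref{unique_decomposition}. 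Equivalently, the cancelling trace $c$ is characterised by $c^{-1} = u^{-1} \sqcap v$ in $\dM(\Sigma^{\pm 1}, I)$, since being a suffix of $u$ is the same as being the inverse of a prefix of $u^{-1}$.

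The task therefore becomes: given SLPs for two traces, compute an SLP for their trace-infimum. My plan is to do this by binary search on $|u^{-1} \sqcap v|$, using the following primitive: for a candidate length $k$, extract via composition systems SLPs for the length-$k$ prefixes of $\val(\dC^{-1})$ and $\val(\dD)$, convert them back to SLPs by Hagenah's algorithm, and test whether they represent the same trace — an instance of the compressed word problem for $\dG(\Sigma, I)$, solvable in polynomial time by~\cite{LoSchl07}. Once $|c|$ is located, composition systems deliver SLPs for $c$, $p = u \setminus c$, and $r = v \setminus c^{-1}$, and $\dA_X$ is obtained by concatenating the SLPs for $p$ and $r$.

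The main technical obstacle is that a trace admits many linearizations, so a length-$k$ \emph{word}-prefix of $\val(\dC^{-1})$ need not be a trace-prefix of $u^{-1}$, and the naive binary search may miss the intended cancellation. My plan to resolve this is to enforce throughout the induction that the SLPs I maintain yield a canonical linearization of their trace — for example a Foata-style lexicographic normal form — so that word prefixes and trace prefixes coincide; the infimum/supremum machinery of Lemmas~\ref{lemma-supremum-exists}--\ref{trace_supremum} then ensures the canonical form is preserved under the concatenation operation, and the monotonicity required for the binary search becomes a statement about the prefix lattice of $\dM(\Sigma^{\pm 1}, I)$ rather than about word positions. Combined with the polynomial size-growth bookkeeping through the parse tree of $\dA$, this yields a polynomial-time algorithm computing an $R$-reduction.
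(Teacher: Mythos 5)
First, a point of reference: the paper does not prove this theorem at all --- it is imported verbatim from \cite{LoSchl07} --- so your proposal has to be judged against that source. Your skeleton is indeed the right one and matches the cited proof in spirit: process $\dA$ bottom-up in Chomsky normal form, and for a production $X\to YZ$ with $u=\NF_R(\val(Y))$, $v=\NF_R(\val(Z))$ already in hand, use the fact that $\NF_R(uv)=pr$ where $u=pc$, $v=c^{-1}r$ and $c^{-1}=u^{-1}\sqcap v$. That structural lemma is correct (if $pr$ were reducible, a letter of $\min(u^{-1}\setminus c^{-1})\cap\min(v\setminus c^{-1})$ would extend the infimum). The whole difficulty of the theorem, however, is concentrated in computing an SLP for $u^{-1}\sqcap v$ from SLPs for $u^{-1}$ and $v$, and this is exactly where your argument breaks.

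Your binary search on $k$ compares the traces $[\val(\dC^{-1})[1:k]]_I$ and $[\val(\dD)[1:k]]_I$. Every word prefix of a linearization is a trace prefix, but the converse fails: the infimum is a specific element of the prefix lattice of $u^{-1}$ and in general it is \emph{not} realized as a length-$k$ word prefix of whatever linearization the SLP happens to store --- canonical or not. Concretely, take $(a,b)\in I$, $u^{-1}=[ab]_I$, $v=[b]_I$ (so $u=b^{-1}a^{-1}$, $uv=b^{-1}a^{-1}b$ and $\NF_R(uv)=a^{-1}$): here $u^{-1}\sqcap v=[b]_I$ has length $1$, but the length-$1$ prefix of the stored word $ab$ is $a$, so your equality test fails at $k=1$ and your algorithm outputs the unreduced $uv$. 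Fixing a lexicographic or Foata normal form does not help, because which letter of an independent clique the infimum ``selects'' depends on $v$, not on any fixed order; and the set of $k$ for which the two length-$k$ word prefixes are equal as traces is not even downward closed (e.g.\ $ab$ vs.\ $ba$ with $aIb$ agree at $k=2$ but not at $k=1$), so the binary search is not monotone. The correct computation of the compressed infimum (Theorem~\ref{theo-compressed-inf}, also from \cite{LoSchl07}) goes through projections onto pairs of dependent letters in the spirit of Lemma~\ref{lem:liuwrazeg}, determining $|u^{-1}\sqcap v|_a$ letter by letter, together with composition-system surgery --- and you cannot simply invoke it here without circularity, since it is part of the same package you are trying to prove. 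Finally, your size bookkeeping is also insufficient as stated: allowing each of the $O(|\dA|)$ inductive steps to ``enlarge the SLP by a polynomial factor'' compounds to exponential size; one needs additive polynomial growth per step, which in \cite{LoSchl07} is arranged by representing each $\NF_R(\val(X))$ as a short composition-system expression over the previously built grammars.
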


\begin{corollary}\label{cwp_graphgroup}
  The following decision problem can be solved in polynomial time.

  \medskip
  \noindent
  INPUT: An SLP $\dA$ over $\Sigma^{\pm 1}$.\\
  QUESTION: $\NF_R([\val(\dA)]_I)=[\epsilon]_I$?
\end{corollary}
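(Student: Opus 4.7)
The plan is to derive this directly from Theorem~\ref{R-reduction} together with the basic SLP length computation mentioned in Section~\ref{S SLP}. Given the input SLP $\dA$ over $\Sigma^{\pm 1}$, I would first invoke Theorem~\ref{R-reduction} to compute in polynomial time an SLP $\dB$ that is an $R$-reduction of $\dA$, so that $[\val(\dB)]_I = \NF_R([\val(\dA)]_I)$. Since the empty trace $[\varepsilon]_I$ is represented uniquely by the empty word in $(\Sigma^{\pm 1})^\ast$, the condition $\NF_R([\val(\dA)]_I) = [\varepsilon]_I$ is equivalent to $\val(\dB) = \varepsilon$.

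The remaining step is to decide whether $\val(\dB) = \varepsilon$. This is one of the elementary SLP operations recalled in Section~\ref{S SLP}: the integer $|\val(\dB)|$ can be computed in polynomial time by a straightforward bottom-up traversal of the productions of $\dB$ (summing on right-hand sides, with $1$ for each terminal and $0$ for $\varepsilon$). We answer yes iff $|\val(\dB)| = 0$. Since both steps run in polynomial time and the decision procedure is correct by the preceding equivalence, the whole algorithm runs in polynomial time. No step poses any real obstacle; the content is entirely in Theorem~\ref{R-reduction}, and the corollary just packages that result as a decision procedure for the compressed word problem of $\dG(\Sigma,I)$.
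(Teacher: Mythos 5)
Your proof is correct and matches the paper's (implicit) argument: the corollary is stated without proof precisely because it follows by computing an $R$-reduction $\dB$ via Theorem~\ref{R-reduction} and then testing $|\val(\dB)|=0$, which is one of the elementary polynomial-time SLP operations listed in Section~\ref{S SLP}. Nothing is missing.
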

Note that this is equivalent to a polynomial time solution of the
compressed word problem for graph groups.

\begin{theorem}[\cite{LoSchl07}] \label{theo-compressed-inf} 
  For given SLPs $\dA_0$ and $\dA_1$ over $\Sigma^{\pm 1}$, we can compute in
  polynomial time SLPs $\dP$, $\dD_0$, $\dD_1$ with
  $[\val(\dP)]_I=[\val(\dA_0)]_I\sqcap[\val(\dA_1)]_I$ and
  $[\val(\dD_i)]_I=[\val(\dA_i)]_I\setminus [\val(\dA_{1-i})]_I$ ($i
  \in \{0,1\}$).
\end{theorem}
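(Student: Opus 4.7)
The plan is to reduce everything to compressed primitives on SLPs that are known to run in polynomial time. The first ingredient I would build is a subroutine that decides, given SLPs $\dB$ and $\dC$ over $\Sigma^{\pm 1}$, whether $[\val(\dB)]_I \preceq [\val(\dC)]_I$. This can be based on the standard characterization of the prefix order in trace monoids: $[w]_I \preceq [w']_I$ holds if and only if for every dependent pair $\{a,b\}\in D$ the projection $\pi_{\{a,b\}}(w)$ is a (word) prefix of $\pi_{\{a,b\}}(w')$ in the free monoid $\{a,b\}^*$. Projections of SLPs are produced in linear time by replacing, in the terminal productions, every letter outside the target alphabet by $\varepsilon$. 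Word-prefix tests for SLPs in the free case reduce to SLP equality (compare $\val(\dB')$ with a prefix of $\val(\dC')$ of the same length, extracted via a composition system and Hagenah's conversion), so Plandowski's or Lifshits's algorithm yields the primitive. Iterating over the $O(|\Sigma|^2)$ dependent pairs gives the subroutine in polynomial time.

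Next I would compute the SLP $\dP$ for the infimum by divide-and-conquer along $\dA_0$, assumed to be in Chomsky normal form. At the top production $S \to XY$ of $\dA_0$, test whether $[X]_I \preceq [\val(\dA_1)]_I$ using the primitive above. If yes, then the infimum equals $[X]_I \cdot \bigl([Y]_I \sqcap ([\val(\dA_1)]_I \setminus [X]_I)\bigr)$; otherwise the infimum lies strictly inside $[X]_I$, and the problem reduces to computing the infimum of $[X]_I$ and $[\val(\dA_1)]_I$. Walking down the derivation tree of $\dA_0$, one obtains a sequence of new nonterminals of polynomial total size, each defined from the previous ones and from appropriate quotients of $\val(\dA_1)$; these quotients are produced by the trace-division procedure described below. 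Since Chomsky normal form gives at most linear depth recursion in $|\dA_0|$, and each step produces a constant number of new productions, the resulting SLP $\dP$ has polynomial size.

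For the differences, note that $[\val(\dA_i)]_I \setminus [\val(\dA_{1-i})]_I = [\val(\dA_i)]_I \setminus [\val(\dP)]_I$, so it suffices to produce an SLP-version of trace division: given SLPs $\dA$ and $\dP$ with $[\val(\dP)]_I \preceq [\val(\dA)]_I$, compute an SLP $\dD$ with $[\val(\dD)]_I = [\val(\dA)]_I \setminus [\val(\dP)]_I$. I would do this by binary search on positions of the word $\val(\dA)$: for an index $k$, test via the prefix primitive whether $[\val(\dA)[1:k]]_I \succeq [\val(\dP)]_I$. The smallest such $k$ marks a split inside $\dA$ after which the suffix, represented as a composition-system expression of the form $A \to \val(\dA)[k{+}1:|\val(\dA)|]$, is a word representative of the desired quotient; Hagenah's construction then converts it back to an SLP of polynomial size.

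The main obstacle is justifying the binary-search step for trace division: when $[\val(\dP)]_I$ is a trace-prefix of $[\val(\dA)]_I$ it need not be a word-prefix of the particular linearization $\val(\dA)$, so the quantity $[\val(\dA)[1:k]]_I \sqcap [\val(\dP)]_I$ can drift below $[\val(\dP)]_I$ for some $k$ even though it is reached for a larger $k$. I would handle this by showing that the predicate ``$[\val(\dP)]_I \preceq [\val(\dA)[1:k]]_I$'' is monotone in $k$ — which follows because prefixes of $\val(\dA)$ as words form a chain under $\preceq$ in $\dM(\Sigma^{\pm 1},I)$ — so the minimal witnessing $k$ exists and is found by binary search. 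Care is also needed to show that all intermediate SLPs and composition systems stay of polynomial size, which is the step I would double-check most carefully before appealing to Theorem~\ref{R-reduction} and the standard compressed toolbox.
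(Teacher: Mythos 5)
The paper does not prove this statement at all --- it imports it from \cite{LoSchl07}, where the construction is considerably more involved than what you propose. Your prefix-testing primitive is sound (it is essentially Lemma~\ref{lem:liuwrazeg} specialized to the occurrence $(0,\dots,0)$, combined with standard SLP machinery), but both of your main reductions fail as soon as the independence relation is nontrivial.

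First, the ``no'' branch of your divide-and-conquer is wrong: $[\val(X)]_I\not\preceq[\val(\dA_1)]_I$ does not imply $[\val(X)\val(Y)]_I\sqcap[\val(\dA_1)]_I=[\val(X)]_I\sqcap[\val(\dA_1)]_I$. Take $\Sigma=\{a,b\}$ with $(a,b)\in I$, $\val(X)=a$, $\val(Y)=b$, $\val(\dA_1)=b$. Then $[ab]_I\sqcap[b]_I=[b]_I$, whereas $[a]_I\sqcap[b]_I=\varepsilon$: letters of $Y$ that are independent of the part of $X$ blocking the prefix relation can still slide into the common prefix, so the infimum is not confined to $[\val(X)]_I$. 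Second, your trace-division step fails for the same reason: with $\val(\dA)=ba$ and $\val(\dP)=a$ over the same alphabet, the minimal $k$ with $[\val(\dP)]_I\preceq[\val(\dA)[1:k]]_I$ is $k=2$, and your procedure returns $\varepsilon$ although $[\val(\dA)]_I\setminus[\val(\dP)]_I=[b]_I$. Monotonicity of the predicate in $k$ is true but beside the point: the minimal $k$ generally exceeds $|\val(\dP)|$, and then $[\val(\dA)[1:k]]_I\setminus[\val(\dP)]_I$ is a nonempty piece of the quotient that the suffix $\val(\dA)[k{+}1:|\val(\dA)|]$ misses entirely. Both failures stem from treating the trace infimum and quotient as if positions in one fixed linearization aligned with the trace-theoretic decomposition; overcoming exactly this is the content of the theorem. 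Even setting correctness aside, iterating a quotient computation once per level of the derivation tree of $\dA_0$ risks the polynomial-per-step, exponential-overall blow-up that the paper itself warns about after Corollary~\ref{decide_compressed_supremum}.
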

An immediate corollary of Theorem~\ref{theo-compressed-inf} and
Lemma~\ref{lemma-supremum-exists} is:

\begin{corollary} \label{theo-compressed-sup} For given SLPs $\dA_0$
  and $\dA_1$ over $\Sigma^{\pm 1}$, we can check in polynomial time, whether
  $[\val(\dA_0)]_I\sqcup[\val(\dA_1)]_I$ exists, and in case it
  exists, we can compute in polynomial time an SLP $\dS$ with
  $[\val(\dS)]_I = [\val(\dA_0)]_I\sqcup[\val(\dA_1)]_I$.
\end{corollary}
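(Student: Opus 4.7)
The plan is to package together the constructive result of Theorem~\ref{theo-compressed-inf} with the criterion stated in Lemma~\ref{lemma-supremum-exists}. First I invoke Theorem~\ref{theo-compressed-inf} on the input $\dA_0,\dA_1$ to obtain, in polynomial time, SLPs $\dP,\dD_0,\dD_1$ with $[\val(\dP)]_I=[\val(\dA_0)]_I\sqcap[\val(\dA_1)]_I$ and $[\val(\dD_i)]_I=[\val(\dA_i)]_I\setminus[\val(\dA_{1-i})]_I$ for $i\in\{0,1\}$.

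By Lemma~\ref{lemma-supremum-exists} the supremum exists if and only if $[\val(\dD_0)]_I\,I\,[\val(\dD_1)]_I$, i.e.\ if and only if $\alp(\val(\dD_0))\times\alp(\val(\dD_1))\subseteq I$. To test this I compute $\alp(\val(X))$ for every nonterminal $X$ of $\dD_0$ and $\dD_1$ by a bottom-up pass through the grammars (which we may assume to be in Chomsky normal form): set $\alp(\val(X))=\{a\}$ whenever $X\to a$ is a terminal production, and $\alp(\val(X))=\alp(\val(Y))\cup\alp(\val(Z))$ whenever $X\to YZ$. Each such alphabet is a subset of the fixed finite set $\Sigma^{\pm 1}$, so the pass runs in polynomial time, after which the inclusion $\alp(\val(\dD_0))\times\alp(\val(\dD_1))\subseteq I$ can be decided by a constant number of table lookups in $I$.

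If the test fails, report that the supremum does not exist. Otherwise, Lemma~\ref{lemma-supremum-exists} gives the explicit formula $[\val(\dA_0)]_I\sqcup[\val(\dA_1)]_I=[\val(\dP)]_I\,[\val(\dD_0)]_I\,[\val(\dD_1)]_I$, so it remains to build an SLP $\dS$ for the concatenation $\val(\dP)\val(\dD_0)\val(\dD_1)$. This is routine: rename the nonterminals of $\dP,\dD_0,\dD_1$ to be pairwise disjoint, take the union of their production sets, and add a fresh start symbol $S$ with production $S\to P\,D_0\,D_1$, where $P,D_0,D_1$ are the initial nonterminals of $\dP,\dD_0,\dD_1$. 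The resulting SLP has size polynomial in the input and its value, viewed as a trace, equals the desired supremum.

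There is no real obstacle here; the whole argument is essentially a direct packaging of Theorem~\ref{theo-compressed-inf} together with the explicit form of the supremum given by Lemma~\ref{lemma-supremum-exists}, the only point to verify being that the alphabet of an SLP-compressed word can be computed in polynomial time, which is the straightforward inductive pass described above.
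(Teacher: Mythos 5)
Your proof is correct and follows exactly the route the paper intends: the paper presents this corollary as an immediate consequence of Theorem~\ref{theo-compressed-inf} and Lemma~\ref{lemma-supremum-exists} without spelling out the details, and your argument simply fills in the routine steps (the bottom-up alphabet computation and the concatenation of SLPs). No issues.
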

Lemma~\ref{trace_supremum} and Corollary~\ref{theo-compressed-sup}
imply the following corollary.

\begin{corollary}\label{decide_compressed_supremum}
  Let $r$ be a fixed constant.  For given SLPs $\dV_1,\dots,\dV_r$
  over $\Sigma^{\pm 1}$, we can decide in polynomial time whether
  $[\val(\dV_1)]_I\sqcup\dots\sqcup[\val(\dV_r)]_I$ exists, and in
  case it exists we can compute in polynomial time an SLP $\dS$ with
  $[\val(\dS)]_I=[\val(\dV_1)]_I\sqcup\dots\sqcup[\val(\dV_r)]_I$.
\end{corollary}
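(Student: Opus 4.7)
The plan is to prove this by a straightforward induction on $r$, using Lemma~\ref{trace_supremum} as the recursive definition of the supremum and Corollary~\ref{theo-compressed-sup} as the polynomial-time tool for the two-trace case.

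For the base case $r=1$, we simply set $\dS := \dV_1$. For the inductive step, suppose we have already computed an SLP $\dU$ with $[\val(\dU)]_I = [\val(\dV_1)]_I \sqcup \cdots \sqcup [\val(\dV_{r-1})]_I$ (or determined that this supremum does not exist, in which case the full supremum also cannot exist and we are done). By Lemma~\ref{trace_supremum}, the supremum $[\val(\dV_1)]_I \sqcup \cdots \sqcup [\val(\dV_r)]_I$ exists if and only if $[\val(\dU)]_I \sqcup [\val(\dV_r)]_I$ exists, and in that case the two suprema coincide. Hence we apply Corollary~\ref{theo-compressed-sup} to the pair $(\dU, \dV_r)$: in polynomial time we can check existence and, if it exists, produce an SLP $\dS$ for the supremum.

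The only subtle point, and the reason the hypothesis that $r$ is a fixed constant is essential, is bounding the size of the intermediate SLPs. Each application of Corollary~\ref{theo-compressed-sup} (which in turn relies on Theorem~\ref{theo-compressed-inf}) produces an output SLP whose size is bounded by some polynomial $p(n,m)$ in the sizes of its two input SLPs. If $N$ denotes the total size of the input $\dV_1, \ldots, \dV_r$, then after the $k$-th iteration the accumulated SLP $\dU_k$ has size at most $p(|\dU_{k-1}|, N) \leq q_k(N)$ for some polynomial $q_k$ whose degree depends on $k$. After $r-1$ iterations we obtain an SLP of size polynomial in $N$ (with exponent depending on the fixed $r$), and the total running time is likewise polynomial in $N$.

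The main (minor) obstacle is simply to be careful that the composition of polynomially many polynomial-time steps remains polynomial time, which is why the statement fixes $r$ rather than taking it as part of the input; no further trace-theoretic argument is needed beyond Lemma~\ref{trace_supremum} and Corollary~\ref{theo-compressed-sup}.
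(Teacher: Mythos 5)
Your proposal is correct and matches the paper's argument exactly: the paper likewise obtains the corollary by iterating Corollary~\ref{theo-compressed-sup} via the inductive characterization in Lemma~\ref{trace_supremum}, and it makes the same remark that fixing $r$ is what prevents the polynomial size increase of the intermediate SLPs from compounding into an exponential blow-up. Nothing is missing.
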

It is important that we fix the number $r$ of SLPs in
Corollary~\ref{decide_compressed_supremum}: Each application 
of Lemma~\ref{theo-compressed-sup} may increase the size of the SLP
polynomially. Hence, a non-fixed number of applications might
lead to an exponential blow-up.

\section{Double $a$-cones}

The definition of the problem $\RSCCP(\Sigma,I)$ 
in Section~\ref{sec:main-results} motivates 
the following definition: A \emph{double $a$-cone} for $a\in\Sigma^{\pm 1}$ is an
$R$-irreducible trace of the form $uau^{-1}$ with $u\in
\dM(\Sigma^{\pm 1},I)$.  In this section, we will prove several
results on double $a$-cones, which will be needed later for deciding
$\RSCCP(\Sigma,I)$ in polynomial time.

\begin{lemma}\label{char_cone}
  A trace $uau^{-1}$ is a double $a$-cone if and only if the following
  conditions hold:
  \begin{enumerate}[(1)]
  \item $u \in \IRR(R)$
  \item  $\max(u)\cap (\{a,a^{-1}\}\cup I(a)) =\emptyset$.
  \end{enumerate}
\end{lemma}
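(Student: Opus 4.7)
The plan is to establish both implications by exploiting the following basic fact: a trace $t$ is $R$-reducible if and only if its dependence graph $D_t$ contains two comparable nodes carrying opposite labels $c, c^{-1} \in \Sigma^{\pm 1}$ with no strictly intermediate node in the partial order. I regard $D_{uau^{-1}}$ as the disjoint union of $D_u$, a middle vertex labelled $a$, and $D_{u^{-1}}$, with cross-edges determined by dependence of labels; each potential reducible pair then falls into one of four regions, and I argue case by case.

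For the ``only if'' direction, (1) is immediate since any reducible pair internal to $u$ lifts to one in $uau^{-1}$. For (2), I argue contrapositively: if some $b \in \max(u) \cap (\{a, a^{-1}\} \cup I(a))$ existed, then $u = u' b$ as traces and $uau^{-1} = u'\, b\, a\, b^{-1}\, u'^{-1}$. When $b = a$ or $b = a^{-1}$, the middle three letters already contain a reducible factor $a\, a^{-1}$ or $a^{-1}\, a$. When $b \in I(a)$, also $b^{-1}\, I\, a$, so commuting $a$ past $b$ (or $b^{-1}$) in the trace exposes the reducible factor $b\, b^{-1}$. In every case $uau^{-1} \notin \IRR(R)$.

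For the ``if'' direction, assume (1) and (2) and suppose towards a contradiction that $D_{uau^{-1}}$ admits a reducible pair $v_1, v_2$ with labels $c, c^{-1}$. If both endpoints lie in the $u$-part, or both in the $u^{-1}$-part, then $u$ (respectively $u^{-1}$, and so also $u$) is reducible, contradicting (1). If one endpoint is the middle vertex, the other carries label $a^{\pm 1}$; a short node-chasing argument (see below) shows it must be a sink of $D_u$, or a source of $D_{u^{-1}}$ (which amounts to a sink of $D_u$ labelled $a$). Either conclusion places $a$ or $a^{-1}$ in $\max(u)$, contradicting (2). Finally, if $v_1$ is in the $u$-part and $v_2$ in the $u^{-1}$-part, the middle vertex cannot itself be intermediate; since the middle lies between $v_1$ and $v_2$ in every linearization whenever its label is dependent with $c$, this forces $c \in I(a)$, and the same sink-argument on the $u$-side then yields $c \in \max(u)$, again contradicting (2).

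The technical step requiring the most care is the recurring sink-argument. Suppose $v_1$ lies on the $u$-side with label $c$, and that the partner is either the middle or lies on the $u^{-1}$-side. If $v_1$ had any descendant $w'$ in $D_u$, the first edge of any descending path leads to a node whose label is dependent with $c$; in the former configuration, this label is then also dependent with the middle's label $a$, giving a cross-edge $w' \to \text{middle}$, while in the latter it is dependent with $c^{-1}$, giving a cross-edge $w' \to v_2$. In either case $w'$ is a strictly intermediate node between $v_1$ and the partner, contradicting the reducible pair assumption. Hence $v_1$ is a sink of $D_u$, so its label belongs to $\max(u)$, as required.
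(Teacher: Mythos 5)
Your proof is correct and follows essentially the same route as the paper: the ``only if'' direction is the identical contrapositive argument exhibiting an explicit reducible factor $a^{\epsilon}aa^{-\epsilon}$ or $bb^{-1}$, and your ``if'' direction is a careful dependence-graph elaboration of the claim the paper disposes of in one sentence (``no element from $\max(u)$ cancels against or commutes with $a$''). The case analysis and the sink-argument are sound, so no changes are needed.
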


\begin{proof}
  Let $v=uau^{-1}$ be a double $a$-cone. 
  Since $v \in \IRR(R)$, also $u \in \IRR(R)$.
  If $a^\epsilon\in \max(u)$ for
  $\epsilon\in\{1,-1\}$ then $v=u'a^{\epsilon} a
  a^{-\epsilon} u'^{-1}$ for
  some trace $u'$ contradicting the $R$-irreducibility of
  $v$. Similarly, if there is some $b\in I(a)\cap \max(u)$ it follows
  that $v=u'bab^{-1}u'^{-1}=u'abb^{-1}u'^{-1}$ again a
  contradiction. Suppose on the other hand that $(1)$ and $(2)$ hold
  for $v=uau^{-1}$. Since $u \in \IRR(R)$ and no element from
  $\max(u)$ cancels against or commutes with $a$ it follows that $v$
  is also $R$-irreducible.  \qed
\end{proof}
It follows that every letter in a double $a$-cone either lies before
or after the central letter $a$. Its dependence graph always has the
following form:
\begin{center}
  \setlength{\unitlength}{1.2mm}
  \begin{picture}(20,10)(-10,-5)
    \gasset{Nadjust=wh,Nadjustdist=0,Nfill=n,Nframe=n,AHnb=0,linewidth=.1}
    \node(a)(0,0){$a$}
    \node(lo)(-10,5){}
    \node(lm)(-1,0){}
    \node(lu)(-10,-5){}
    \node(ro)(10,5){}
    \node(rm)(1,0){}
    \node(ru)(10,-5){}
    \drawedge(lo,lm){}
    \drawedge(lo,lu){}
    \drawedge(lm,lu){}
    \drawedge(ro,rm){}
    \drawedge(ro,ru){}
    \drawedge(rm,ru){}
  \end{picture}
\end{center}
By the following lemma, each double $a$-cone has a unique
factorization of the form $u_1 b u_2$ with $|u_1|=|u_2|$.

\begin{lemma}\label{unique_cone}
  Let $v=uau^{-1}$ be a double $a$-cone and let $v=u_1bu_2$ with
  $b\in\Sigma^{\pm 1}$ and $|u_1|=|u_2|$. Then $a=b$, $u_1=u$ and
  $u_2=u^{-1}$.
\end{lemma}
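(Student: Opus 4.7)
The plan is to apply Levi's Lemma (Lemma~\ref{levi_lemma}) to the split $u \cdot (au^{-1}) = u_1 \cdot (bu_2)$. This yields traces $x, y_1, y_2, z$ with $y_1\, I\, y_2$ such that $u = xy_1$, $au^{-1} = y_2 z$, $u_1 = xy_2$, and $bu_2 = y_1 z$. Cancelling lengths in $|u| = |u_1|$ gives $|y_1| = |y_2|$, so it suffices to force $y_1 = y_2 = \varepsilon$; once this is done, $u_1 = x = u$ and $bu_2 = z = au^{-1}$, and reading off the leading letter of $au^{-1}$ will pin down $b=a$ and then cancellation yields $u_2 = u^{-1}$.

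The key preliminary step is to show $\min(au^{-1}) = \{a\}$. Indeed, $a$ is clearly a possible first letter. Conversely, if $c \in \min(au^{-1})$ with $c \neq a$, then $c$ must be independent of $a$ and must also lie in $\min(u^{-1})$. Since $\min(u^{-1}) = \max(u)^{-1}$, this gives $c^{-1} \in \max(u)$, and because $I$ was extended symmetrically across inverses, $c^{-1} \in I(a)$. This contradicts condition~(2) of Lemma~\ref{char_cone}, which states that $\max(u) \cap I(a) = \emptyset$.

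Now I would rule out $y_2 \neq \varepsilon$ by contradiction. If $y_2 \neq \varepsilon$ then also $y_1 \neq \varepsilon$. As $y_2$ is a non-empty prefix of $au^{-1}$, we have $\emptyset \neq \min(y_2) \subseteq \min(au^{-1}) = \{a\}$, so $\min(y_2) = \{a\}$ and in particular $a \in \alp(y_2)$. The relation $y_1\, I\, y_2$ forces $\alp(y_1) \subseteq I(a)$, hence $\max(y_1) \subseteq I(a)$. On the other hand, because $u = xy_1$, every letter of $\max(y_1)$ is still realisable as a last letter of $u$, so $\max(y_1) \subseteq \max(u)$. Combining these, $\max(y_1) \subseteq \max(u) \cap I(a) = \emptyset$, which contradicts $y_1 \neq \varepsilon$.

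Therefore $y_1 = y_2 = \varepsilon$, and as explained above the lemma follows, with the final identification $b = a$ using the fact $\min(au^{-1}) = \{a\}$ proved above. The main obstacle is this small geometric computation of $\min(au^{-1})$; everything else is a routine length-and-alphabet bookkeeping on Levi's decomposition.
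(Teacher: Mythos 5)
Your proof is correct and follows essentially the same route as the paper: apply Levi's Lemma to the split at the central letter, use the length constraint $|u_1|=|u_2|=|u|$, and derive a contradiction from condition~(2) of Lemma~\ref{char_cone} via the independence $y_1\,I\,y_2$. The only (immaterial) difference is that you group the central $a$ with the right factor and exploit $\min(au^{-1})=\{a\}$, whereas the paper groups it with the left factor and uses $\max(ua)=\{a\}$ together with the dependence of $a$ on $\min(u^{-1})$ --- a mirror image of the same argument.
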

	
\begin{proof}
  Let $v=uau^{-1}=u_1bu_2$ be a double $a$-cone where
  $|u_1|=|u_2|$. We have $\max(ua)=\{a\}$ and $(a,c)\in D$ for all
  $c\in \min(u^{-1})$. Moreover $|u_1|=|u_2|=|u|$. By Levi's Lemma,
  there exist traces $x$, $y_1$, $y_2$ and $z$ with $u_1b=xy_1$,
  $u_2=y_2z$, $ua=xy_2$ and $u^{-1}=y_1z$. Assume that
  $y_2\neq\varepsilon$. Since $\max(y_2)\subseteq \max(ua)=\{a\}$ we
  get $\max(y_2)=\{a\}$. Since $(a,c)\in D$ for all $c\in
  \min(y_1)\subseteq \min(u^{-1})$ and $y_1Iy_2$ it follows
  $y_1=\varepsilon$. But then $|u|=|u^{-1}|=|z|<|y_2z|=|u_2|$ leads to
  a contradiction. Hence, we must have $y_2=\varepsilon$. Thus
  $|u|=|u^{-1}|=|y_1z|=|y_1|+|z|=|y_1|+|u_2|=|y_1|+|u|$ implies
  $y_1=\varepsilon$. Therefore we get $ua=u_1b$ and
  $u^{-1}=u_2$. Finally, since $\max(ua)=\{a\}$ we must have $a=b$ and
  $u=u_1$.  \qed
\end{proof}

\begin{lemma} \label{lemma-double-a-cone} 
  Let $w\in \dM(\Sigma^{\pm 1}, I)$ be $R$-irreducible and $a\in
  \Sigma^{\pm 1}$. Then the following three conditions are equivalent:
  \begin{enumerate}[(1)]
  \item There exists $x\in \dM(\Sigma^{\pm 1}, I)$ with $w=xax^{-1}$ in $\dG(\Sigma, I)$.
  \item There exists $x\in \dM(\Sigma^{\pm 1}, I)$ with $w=xax^{-1}$ in $\dM(\Sigma^{\pm 1}, I)$.
  \item  $w$ is a double $a$-cone.
  \end{enumerate}
\end{lemma}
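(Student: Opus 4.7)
The implications $(3)\Rightarrow(2)$ and $(2)\Rightarrow(1)$ are immediate: a double $a$-cone is by definition of the form $uau^{-1}$ in $\dM(\Sigma^{\pm 1},I)$, so $x=u$ witnesses $(2)$; and $\dG(\Sigma,I)$ is a quotient of $\dM(\Sigma^{\pm 1},I)$, so an equality in the latter passes to the former.

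The substantive direction is $(1)\Rightarrow(3)$, which I would prove by a minimality argument. Pick $x\in\dM(\Sigma^{\pm 1},I)$ with $w=xax^{-1}$ in $\dG(\Sigma,I)$ and $|x|$ as small as possible, and put $t=xax^{-1}$. Since $R$ is confluent and $w\in\IRR(R)$, we have $\NF_R(t)=w$; hence once I show $t\in\IRR(R)$ it follows that $t=w$ in $\dM(\Sigma^{\pm 1},I)$, so that $w$ is a double $a$-cone by definition. Assume for contradiction that $t$ carries a redex, i.e., $t=s_1cc^{-1}s_2$ in $\dM(\Sigma^{\pm 1},I)$ for some $c\in\Sigma^{\pm 1}$. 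I would then run a case analysis on where the two canceling letters sit inside the word $x\cdot a\cdot x^{-1}$, and in every configuration produce a strictly shorter $x'\in\dM(\Sigma^{\pm 1},I)$ with $x'ax'^{-1}=t$ in $\dG(\Sigma,I)$, contradicting the choice of $x$. The easy cases are: both canceling letters inside $x$ (then $x$ is $R$-reducible and the mirror redex is automatically present inside $x^{-1}$, so reducing both pairs yields $x'$ with $|x'|=|x|-2$); both inside $x^{-1}$ (symmetric); and one equal to the central $a$ (in which case the redex condition forces the other letter, an $a^{\mp 1}$ sitting in $x$ or in $x^{-1}$, to be surrounded only by letters independent of $a$, so it commutes to the boundary of $x$, giving $x=x'a^{\pm 1}$ and reducing $t\to_R x'ax'^{-1}$ in $\dM(\Sigma^{\pm 1},I)$).

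The remaining case is delicate: one canceling letter sits strictly inside $x$ and the other strictly inside $x^{-1}$, with the central $a$ trapped between them. Writing $x=c_1\cdots c_n$, the letter in $x^{-1}$ is the inverse of some occurrence $c_k$ of $c$ in $x$, while the $x$-side letter is some occurrence $c_i$ of $c$. The redex condition requires every letter strictly between them --- and in particular the central $a$ --- to commute with $c$, forcing $c\in I(a)$. It further requires $c$ to commute with every letter of $x$ lying strictly between positions $\min(i,k)$ and $n$; since $(c,c)\notin I$ by irreflexivity and $c^{-1}$ is dependent on $c$ in the extended alphabet, this forbids $i\neq k$. Hence $i=k$, the single occurrence of $c$ can be commuted to the right end of $x$, giving $x=x'c$, and
\[
t\;=\;x'\,c\,a\,c^{-1}\,x'^{-1}\;=\;x'\,a\,c\,c^{-1}\,x'^{-1}\;\to_R\;x'ax'^{-1}
\]
(using $c\in I(a)$) exhibits a shorter witness. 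The collapse of $i$ and $k$ via irreflexivity of $I$ on the extended alphabet is, I expect, the main technical obstacle; once it is in place the case analysis closes and $w$ must be a double $a$-cone.
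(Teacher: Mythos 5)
Your proof is correct, and all three easy implications plus the overall ``shrink the conjugator'' strategy match the paper; the difference lies in how the shrinking step is organized. The paper takes $x\in\IRR(R)$, inducts on the number of $R$-steps from $xax^{-1}$ to $w$, and case-splits on $\max(x)$: if $\max(x)$ meets $\{a,a^{-1}\}\cup I(a)$ it strips that maximal letter and recurses, and otherwise it invokes Lemma~\ref{char_cone} to conclude that $xax^{-1}$ is already irreducible, so the derivation has length $0$. You instead take $|x|$ minimal and locate a hypothetical redex of $xax^{-1}$ by hand. Your exhaustive case analysis is sound (in particular, a redex cannot have its left letter in $x^{-1}$ and its right letter in $x$, since dependence edges only point forward), and the resolution of your ``delicate'' cross case --- $c\in I(a)$ forced because the central $a$ would otherwise lie between the two canceling vertices, and the collapse $i=k$ forced because otherwise the second occurrence of $c$ in $x$, or the mirror $c^{-1}$ of $p$ in $x^{-1}$, would lie strictly between them --- is exactly the content that the paper has already packaged into Lemma~\ref{char_cone}: any redex of $uau^{-1}$ with $u\in\IRR(R)$ must be witnessed by a letter of $\max(u)$ lying in $\{a,a^{-1}\}\cup I(a)$. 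So your route is self-contained but re-proves that lemma inline; citing it would let you skip the cross case entirely. One cosmetic remark: in your case where both canceling letters lie in $x$, there is no need to also reduce the mirror redex in $x^{-1}$ --- any single reduction step $x\to_R x'$ already yields a conjugator with $|x'|=|x|-2$ and $x'=x$ in $\dG(\Sigma,I)$, which is all the minimality argument requires.
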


\begin{proof}
  Direction ``$(2)\Rightarrow (1)$'' is trivially true and
  ``$(2)\Leftrightarrow (3)$'' is just the definition of a double
  $a$-cone.  For ``$(1)\Rightarrow (2)$'' assume that $w=xax^{-1}$ in
  $\dG(\Sigma, I)$. Since $w \in \IRR(R)$, we have
  $xax^{-1}\xrightarrow{*}_R w$. W.l.o.g., $x \in \IRR(R)$. Let
  $n\geq 0$ such that $xax^{-1}\xrightarrow{}^n_R w$. We prove (2) by
  induction on $n$.

  For $n=0$ we have $w=xax^{-1}$ in $\dM(\Sigma^{\pm 1}, I)$. So
  assume $n>0$. If $a\in \max(x)$ we have $x=ya$ for some
  $y\in\dM(\Sigma^{\pm 1}, I)$ and hence $xax^{-1}=
  yaaa^{-1}y^{-1}\rightarrow_R yay^{-1}$. Since $R$ is confluent, we
  have $yay^{-1}\xrightarrow{*}_R w$ and since each rewriting rule
  from $R$ reduces the length of a trace by $2$ it follows that
  $yay^{-1}\xrightarrow{}^{n-1}_R w$. Hence, by induction, there
  exists a trace $v$ with $w=vav^{-1}$ in $\dM(\Sigma^{\pm 1},
  I)$. The case where $a^{-1}\in\max(x)$ is analogous to the previous
  case. If there exists $b\in\max(x)$ with $(a,b)\in I$ we can infer
  that $x=yb$ for some trace $y$ and
  $xax^{-1}=ybab^{-1}y^{-1}=yabb^{-1}y^{-1}\rightarrow_R yay^{-1}$. As
  for the previous cases we obtain inductively $w=vav^{-1}$ in
  $\dM(\Sigma^{\pm 1},I)$ for some trace $v$. Finally, if
  $\max(x)\cap(\{a,a^{-1}\}\cup I(a))=\emptyset$, then $xax^{-1}$ is a
  double $a$-cone by Lemma~\ref{char_cone} and hence $R$-irreducible,
  which contradicts $n>0$.  \qed
\end{proof}

\begin{lemma}\label{condition_if_conjugating_then_supremum}
  Let $w_a,v_a \in \dM(\Sigma^{\pm 1},I)$ ($a \in \Sigma$) be
  $R$-irreducible such that $w_a=v_a a v_a^{-1}$ in $\dM(\Sigma^{\pm
    1},I)$ for all $a \in \Sigma$ (thus, every $w_a$ is a double
  $a$-cone).  If there is a trace $x\in \dM(\Sigma^{\pm 1},I)$ with $x
  a x^{-1}=w_a$ in $\dG(\Sigma,I)$ for all $a \in \Sigma$, then $s =
  \bigsqcup_{a\in\Sigma} v_a$ exists and $s a s^{-1}=w_a$ in
  $\mathbb{G}(\Sigma,I)$ for all $a \in \Sigma$.
\end{lemma}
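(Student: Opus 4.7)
The plan is to first reduce to the case $x \in \IRR(R)$, which is harmless since replacing $x$ by $\NF_R(x)$ preserves $xax^{-1} = w_a$ in $\dG(\Sigma,I)$. The key technical step is the following structural claim, from which the conclusion falls out: for every $a \in \Sigma$ there exists $t_a \in \dM(\Sigma^{\pm 1},I)$ with $\alp(t_a) \subseteq \{a,a^{-1}\} \cup I(a)$ such that $x = v_a t_a$ holds in $\dM(\Sigma^{\pm 1},I)$. Note that $v_a$ is uniquely determined by $w_a$ via Lemma~\ref{unique_cone}, so the statement is well-posed.

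Granting this claim, the conclusion is immediate. Every $v_a$ is a prefix of $x$, so the family $\{v_a : a \in \Sigma\}$ has $x$ as a common upper bound; a standard consequence of Lemmas~\ref{lemma-supremum-exists} and~\ref{trace_supremum} (using Levi's Lemma~\ref{levi_lemma} to verify the commutation condition at each step) then gives that the supremum $s = \bigsqcup_{a \in \Sigma} v_a$ exists and satisfies $s \preceq x$. Writing $s = v_a z_a$ for each $a$, cancellation in $s \preceq x = v_a t_a$ yields $z_a \preceq t_a$, so $\alp(z_a) \subseteq \{a,a^{-1}\} \cup I(a)$. Every letter of $z_a$ commutes with $a$ in $\dG(\Sigma,I)$, hence $z_a a z_a^{-1} = a$ in the group, so $s a s^{-1} = v_a z_a a z_a^{-1} v_a^{-1} = v_a a v_a^{-1} = w_a$ in $\dG(\Sigma,I)$, as required.

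The main obstacle is proving the structural claim; I would proceed by induction on $|x|$ for a fixed $a \in \Sigma$. In the base case $x = \varepsilon$ we have $w_a = a$ and $v_a = \varepsilon$, so $t_a = \varepsilon$ trivially works. For the inductive step, if $\max(x) \cap (\{a,a^{-1}\} \cup I(a)) = \emptyset$, then $xax^{-1}$ already satisfies the hypotheses of Lemma~\ref{char_cone} and is thus an $R$-irreducible double $a$-cone; comparing it with $w_a = v_a a v_a^{-1}$ and invoking the uniqueness statement of Lemma~\ref{unique_cone} forces $v_a = x$ and $t_a = \varepsilon$. Otherwise, pick any $c \in \max(x) \cap (\{a,a^{-1}\} \cup I(a))$ and write $x = x' c$ with $x' \in \IRR(R)$. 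A three-case check ($c = a$, $c = a^{-1}$, and $c \in I(a)$) shows that in each instance $x' c a c^{-1} x'^{-1} \to_R x' a x'^{-1}$ via a single rewrite step (using, in the third case, that $c I a$ allows the commutation $ca = ac$ in $\dM(\Sigma^{\pm 1},I)$), so $x' a x'^{-1} = w_a$ in $\dG(\Sigma,I)$. Applying the induction hypothesis to $x'$ (with the same $w_a$, hence the same $v_a$) yields $x' = v_a t_a'$ with $\alp(t_a') \subseteq \{a,a^{-1}\} \cup I(a)$; setting $t_a := t_a' c$ closes the induction since $c$ itself lies in $\{a,a^{-1}\} \cup I(a)$.
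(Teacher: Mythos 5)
Your proof is correct and follows essentially the same route as the paper's: reduce to $x\in\IRR(R)$, show that $x=v_a t_a$ with $\alp(t_a)\subseteq\{a,a^{-1}\}\cup I(a)$ by peeling maximal letters of $x$ lying in $\{a,a^{-1}\}\cup I(a)$ and invoking Lemma~\ref{char_cone} and the uniqueness of Lemma~\ref{unique_cone}, then conclude from the fact that the leftover piece commutes with $a$ in $\dG(\Sigma,I)$. The only difference is organizational: the paper first extracts $v_a\preceq x$ via the explicit maximal decomposition $x=t_a u_a a^{n_a}$ and then runs a separate induction (on $|z_a|$) for the alphabet condition, whereas you merge both steps into a single induction on $|x|$.
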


\begin{proof}
  Assume that a trace $x\in\dM(\Sigma^{\pm 1},I)$ exists with
  $xax^{-1}=w_a$ in $\mathbb{G}(\Sigma,I)$ for all $a\in\Sigma$.  We
  can assume w.l.o.g. that $x \in \IRR(R)$. First, write $x$ as
  $x=x_a a^{n_a}$ with $n_a\in \mathbb{Z}$ and $|n_a|$ maximal for
  every $a \in \Sigma$.  Then $a, a^{-1}\not\in \max(x_a)$ and $x_a$
  is uniquely determined by the cancellativity of $\dM(\Sigma^{\pm
    1},I)$. Next we write $x_a$ as $x_a=t_a u_a$ with $u_a I a$ and
  $|u_a|$ maximal.  This decomposition is unique by
  Lemma~\ref{unique_decomposition}. We get
  $$
  xax^{-1}=t_a u_a a^{n_a} a a^{-n_a}u_a^{-1}t_a^{-1}\xrightarrow{*}_R
  t_a u_a a u_a^{-1}t_a^{-1}\xrightarrow{*}_R t_a a t_a^{-1}
  \xrightarrow{*}_R w_a = v_a a v_a^{-1} .
  $$
  From the choice of $n_a$ and $u_a$ it follows that $\max(t_a)\cap (\{a,
  a^{-1}\} \cup I(a)) =\emptyset$. This implies that $t_a a t_a^{-1}
  \in \IRR(R)$.
  Hence $t_a a t_a^{-1}=v_a a v_a^{-1}$ in
  $\dM(\Sigma^{\pm 1},I)$ and by Lemma~\ref{unique_cone} it follows
  that $t_a=v_a$. So for all $a\in\Sigma$ it holds that $v_a\preceq x$
  and therefore $s= \bigsqcup_{a\in\Sigma} v_a$ exists.

  Now we infer that $s a s^{-1}=w_a$ in $\mathbb{G}(\Sigma,I)$ for all
  $a\in\Sigma$. Since $v_a\preceq x$ for all $a\in\Sigma$, there is
  some trace $y$ such that $x=sy$ in $\mathbb{M}(\Sigma^{\pm 1},I)$. 
  We can write $s=v_a r_a$ 
  for all $a\in\Sigma$.  Let $z_a = r_a
  y$ and hence $x = v_a r_a y = v_a z_a$. 
   As a suffix of the $R$-irreducible trace $x$, $z_a$ is
  $R$-irreducible as well.  By assumption we have
  $$
  \forall a\in\Sigma : v_a a v_a^{-1} = w_a = xax^{-1} = v_a z_a a z_a^{-1} v_a^{-1}
  $$ 
  in $\mathbb{G}(\Sigma,I)$ and hence, by cancelling $v_a$ and
  $v_a^{-1}$,
  $$
  \forall a\in\Sigma :  a  = z_a  a z_a^{-1}
  $$ 
  in $\mathbb{G}(\Sigma,I)$. Since $a$ as a single symbol is
  $R$-irreducible, this means that
  \begin{equation} \label{derivation to _a} \forall a\in\Sigma : z_a a
    z_a^{-1} \to^*_R a .
  \end{equation}
  We prove by induction on $|z_a|$ that $\alp(z_a) \subseteq I(a) \cup \{a,a^{-1}\}$.  
  The case $z_a = \varepsilon$ is
  clear. Now assume that $z_a \neq \varepsilon$.  If every maximal
  symbol in $z_a$ belongs to $\Sigma^{\pm 1} \setminus (I(a) \cup
  \{a,a^{-1}\})$, then $z_a a z_a^{-1} \in \IRR(R)$ (recall
  that $z_a \in \IRR(R)$), which contradicts (\ref{derivation to _a}). 
  Hence, let $z_a = z'_a b$ with $b \in I(a) \cup
  \{a,a^{-1}\}$.  We get $z_a a z_a^{-1} \to_R z'_a a {z'}_a^{-1}
  \to^*_R a$. By induction, it follows that $\alp(z'_a)
  \subseteq I(a) \cup \{a,a^{-1}\}$. Hence, the same is true for
  $z_a$ and therefore for the prefix $r_a$ of $z_a$ as well.  But this
  implies $sas^{-1}= v_a r_a a r_a^{-1} v_a^{-1} = v_a a v_a^{-1} =
  w_a$ in $\mathbb{G}(\Sigma,I)$.  \qed
\end{proof}

\section{Restricted simultaneous compressed conjugacy}
\label{sec:RSSC}

Based on our results on double $a$-cones from the previous section, we
will prove Theorem~\ref{decide_ccp} in this section. First, we have to
prove the following lemma:

\begin{lemma}\label{slp_is_cone}
  Let $a\in \Sigma^{\pm 1}$.  For a given SLP $\dA$ with
  $[\val(\dA)]_I \in \IRR(R)$, we can check in polynomial time
  whether $[\val(\dA)]_I$ is a double $a$-cone.  In case
  $[\val(\dA)]_I$ is a double $a$-cone, we can compute in polynomial
  time an SLP $\dV$ over $\Sigma^{\pm 1}$ with
  $\val(\dA)=\val(\dV)\,a\,\val(\dV^{-1})$ in $\dM(\Sigma^{\pm 1},I)$.
\end{lemma}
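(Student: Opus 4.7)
The plan is to exploit the uniqueness of the central letter of a double $a$-cone (Lemma~\ref{unique_cone}) to locate the decomposition $\val(\dA)=u_1\,b\,u_2$ with $|u_1|=|u_2|$ purely by position-based slicing of the string $\val(\dA)$, and then to verify via the compressed-infimum algorithm of Theorem~\ref{theo-compressed-inf} that $u_2$ really is the trace inverse of $u_1$.

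In detail: first compute $n=|\val(\dA)|$; if $n$ is even, answer ``no'', since any double $a$-cone has odd length $2|u|+1$. Otherwise let $k=(n-1)/2$ and, using the polynomial-time SLP operations recalled in Section~\ref{S SLP}, extract the middle letter $b=\val(\dA)[k+1]$ and build SLPs $\dV$, $\dW$ with $\val(\dV)=\val(\dA)[1:k]$ and $\val(\dW)=\val(\dA)[k+2:n]$. If $b\neq a$, answer ``no'': by Lemma~\ref{unique_cone} applied to the factorization of $\val(\dA)$ at position $k+1$, if $[\val(\dA)]_I$ were a double $a$-cone we would necessarily have $b=a$.

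It then remains to test whether $[\val(\dW)]_I=[\val(\dV^{-1})]_I$ in $\dM(\Sigma^{\pm 1},I)$. Trace equality cannot be reduced to graph-group equality, since $\dM(\Sigma^{\pm 1},I)$ admits no free-group cancellation; but we can use the elementary identity ``$x=y$ iff $x\sqcap y=x=y$'', so it suffices to check that the trace infimum $[\val(\dV^{-1})]_I\sqcap[\val(\dW)]_I$ has length $k$, noting that both $\val(\dV^{-1})$ and $\val(\dW)$ already have length $k$. This reduces to a single application of Theorem~\ref{theo-compressed-inf} followed by a length computation on the resulting SLP. If the test succeeds, output ``yes'' together with $\dV$; otherwise output ``no''.

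Correctness of the positive case is immediate: in $\dM(\Sigma^{\pm 1},I)$ we then have $\val(\dA)=\val(\dV)\,a\,\val(\dW)=\val(\dV)\,a\,\val(\dV^{-1})$, which is a double $a$-cone because $[\val(\dA)]_I\in\IRR(R)$ by hypothesis. Conversely, if $[\val(\dA)]_I=uau^{-1}$ is a double $a$-cone then $n=2|u|+1$ is odd and Lemma~\ref{unique_cone}, applied to the factorization of $\val(\dA)$ at the middle position, forces $b=a$, $[\val(\dV)]_I=u$, and $[\val(\dW)]_I=u^{-1}=[\val(\dV^{-1})]_I$, so all checks succeed. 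The procedure makes only a constant number of calls to the polynomial-time operations of Section~\ref{S SLP} and one call to Theorem~\ref{theo-compressed-inf}, so the overall running time is polynomial. The only conceptually delicate point is the trace-equality test in the final step, and Theorem~\ref{theo-compressed-inf} handles it essentially for free.
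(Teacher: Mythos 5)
Your proposal is correct and follows essentially the same route as the paper: test for odd length, slice out the middle letter and the two halves as SLPs, and invoke Lemma~\ref{unique_cone} to reduce everything to the single trace-equality test $[\val(\dV_1)]_I=[\val(\dV_2^{-1})]_I$. The only difference is that you make explicit how to decide compressed trace equality (via Theorem~\ref{theo-compressed-inf} and a length comparison of the infimum), a step the paper simply asserts is doable in polynomial time; your argument for that step is valid.
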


\begin{proof}
  First we check whether $|\val(\dA)|$ is odd.  If not, then
  $[\val(\dA)]_I$ cannot be a double $a$-cone. Assume that
  $|\val(\dA)|=2k+1$ for some $k\geq 0$ and let $\val(\dA)=u_1bu_2$
  with $|u_1|=|u_2|=k$. By \cite{Hag00} we can construct SLPs $\dV_1$
  and $\dV_2$ such that $\val(\dV_1)=\val(\dA)[1:k] =u_1$ and
  $\val(\dV_2)=\val(\dA)[k+2:2k+1]=u_2$.  By Lemma~\ref{unique_cone},
  $[\val(\dA)]_I$ is a double $a$-cone if and only if $a=b$ and
  $[\val(\dV_1)]_I=[\val(\dV_2^{-1})]_I$. This can be checked in
  polynomial time.  \qed
\end{proof}
Now we are in the position to present a polynomial time algorithm for
$\RSCCP(\Sigma,I)$:

\medskip

\noindent
{\it Proof of Theorem \ref{decide_ccp}.}
Let $\dA_a$ ($a \in \Sigma$) be the input SLPs. We have to check
whether there exists $x$ such that $\val(\dA_a) = xax^{-1}$ 
in $\mathbb{G}(\Sigma,I)$ for all $a
\in \Sigma$.  Since the SLP $\dA_a$ and an $R$-reduction of $\dA_a$
represent the same group element in $\mathbb{G}(\Sigma,I)$, Theorem
\ref{R-reduction} allows us to assume that the input SLPs $\dA_a$ ($a
\in \Sigma$) represent $R$-irreducible traces.

We first check whether every trace $[\val(\dA_a)]_I$ is a double
$a$-cone.  By Lemma~\ref{slp_is_cone} this is possible in polynomial
time.  If there exists $a \in \Sigma$ such that $[\val(\dA_a)]_I$ is
not a double $a$-cone, then we can reject by
Lemma~\ref{lemma-double-a-cone}. Otherwise, we can compute (using
again Lemma~\ref{slp_is_cone}) SLPs $\dV_a$ ($a \in \Sigma$) such that
$[\val(\dA_a)]_I = [\val(\dV_a)]_I a [\val(\dV_a)]_I^{-1}$ in
$\dM(\Sigma^{\pm 1},I)$.  Finally,
by Lemma~\ref{condition_if_conjugating_then_supremum}, it suffices to
check whether $\bigsqcup_{a\in\Sigma} [\val(\dV_a)]_I$
exists, which is possible in polynomial time by
Corollary~\ref{decide_compressed_supremum} (recall that $|\Sigma|$ is
a constant in our consideration). Moreover, if this supremum exists,
then we can compute in polynomial time an SLP $\dS$ with
$[\val(\dS)]_I = \bigsqcup_{a\in\Sigma} [\val(\dV_a)]_I$.  Then,
$\val(\dS)$ is a solution for our $\RSCCP(\Sigma,I)$-instance.  \qed

\section{Computing the core of a compressed trace}

In order to prove Theorem~\ref{decidesccp} we need some further
concepts from \cite{Wra88}.

\begin{definition}
  A trace $y$ is called \emph{cyclically $R$-irreducible} if $y \in
  \IRR(R)$  and $\min(y)\cap\min(y^{-1})=\emptyset$. If for a trace
  $x$ we have $\NF_R(x) = uyu^{-1}$ in $\dM(\Sigma^{\pm 1},I)$ for
  traces $y,u$ with $y$ cyclically $R$-irreducible, then we call $y$ the
  \emph{core} of $x$, $\CR(x)$ for short.
\end{definition}
The trace $y$ in the last definition is  uniquely defined
\cite{Wra88}.  Moreover, note that a trace $t$ is a double $a$-cone if and
only if $t \in \IRR(R)$ and $\CR(t) = a$.

In this section, we will present a polynomial time algorithm for
computing an SLP that represents $\CR([\val(\dA)]_I)$ for a given SLP
$\dA$.  For this, we need the following lemmas.

\begin{lemma}\label{prefixinverseepsilon}
  Let $p,t\in\dM(\Sigma^{\pm 1},I)$. If $p\preceq t$, $p^{-1}\preceq
  t$ and $t \in \IRR(R)$, then $p=\varepsilon$.
\end{lemma}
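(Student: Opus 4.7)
The plan is to prove the lemma by induction on $|p|$; the base case $|p| = 0$ is trivial, so suppose $p \neq \varepsilon$. Starting from $t = pu = p^{-1}v$, Levi's Lemma (Lemma~\ref{levi_lemma}) produces traces $x, y_1, y_2, z$ with $y_1\,I\,y_2$, $p = xy_1$, $u = y_2 z$, $p^{-1} = xy_2$, $v = y_1 z$. Cancelling the common right factor $z$ gives $p y_2 = p^{-1} y_1$ with $y_1\,I\,y_2$, reducing the lemma to the following auxiliary claim: \emph{if $A \in \IRR(R)$ satisfies $A w_1 = A^{-1} w_2$ in $\dM(\Sigma^{\pm 1}, I)$ for some $w_1, w_2$ with $w_1\,I\,w_2$, then $A = \varepsilon$}.

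The auxiliary claim is proved by a second induction on $|A|$. In the inductive step, apply Levi's Lemma to $A w_1 = A^{-1} w_2$; since $w_1\,I\,w_2$ forces the common-suffix component of the Levi decomposition to have empty alphabet (and thus to be $\varepsilon$), one is left with $A = x'y_1'$, $A^{-1} = x'y_2'$, $w_1 = y_2'$, $w_2 = y_1'$, and $y_1'\,I\,y_2'$. Substituting $A^{-1} = {y_1'}^{-1}{x'}^{-1}$ yields the equation ${y_1'}^{-1}{x'}^{-1} = x'y_2'$, to which Levi's Lemma is applied a second time. The central observation is that the constraint $y_1'\,I\,y_2'$, together with the fact that no nonempty trace can be independent from its own inverse (because $(a, a^{-1}) \in D$ for every letter $a$), forces the new inner Levi component to be $\varepsilon$. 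This yields the structural form $A = {y_1'}^{-1} Y_2 y_1'$ and the sub-equation $Y_2^{-1} y_1' = Y_2 y_2'$ (still with $y_1'\,I\,y_2'$), having exactly the same shape as the original equation.

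If $y_1' \neq \varepsilon$, set $B = Y_2^{-1}$; then $B \in \IRR(R)$ (any factor of $A \in \IRR(R)$ is $R$-irreducible, since any rewrite inside the factor extends to a rewrite of $A$) and $|B| = |A| - 2|y_1'| < |A|$, so the inductive hypothesis gives $B = \varepsilon$, forcing $A = {y_1'}^{-1} y_1'$. But $w^{-1}w \in \IRR(R)$ only for $w = \varepsilon$, a contradiction. If $y_1' = \varepsilon$, then $A^{-1} = A y_2'$; a per-letter Parikh analysis of this equation (using $|A^{-1}|_a = |A|_{a^{-1}}$) forces $y_2' = \varepsilon$ and $A = A^{-1}$ in $\dM(\Sigma^{\pm 1}, I)$, from which $A = \varepsilon$ follows either by invoking the torsion-freeness of the graph group $\dG(\Sigma, I)$ together with the bijection $\IRR(R) \leftrightarrow \dG(\Sigma, I)$, or by a short direct induction on $|A|$ exploiting the structural form $A = a A'' a^{-1}$ forced by any $a \in \min(A)$ and $a^{-1} \in \max(A)$. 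The main obstacle is coordinating the nested Levi applications so that the recursion strictly decreases $|A|$ while preserving the hypotheses; throughout, the driving principle is that a trace independent from its own inverse must be empty, a direct consequence of the extended independence relation on $\Sigma^{\pm 1}$ being irreflexive with $(a, a^{-1}) \in D$.
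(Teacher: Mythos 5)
Your argument is correct, but it takes a genuinely different route from the paper's. The paper runs a minimal-counterexample descent directly on the irreducible trace $t$: after disposing of $|p|\le 2$ by looking at $\min(t)$, it shows for $|p|>2$ that any $a\in\min(p)$ forces $a^{-1}\in\max(p)$ (otherwise $a$ would have to be independent of all of $p^{-1}$, contradicting $a\,D\,a^{-1}$), writes $p=aqa^{-1}$, cancels the leading $a$ and passes to the strictly shorter irreducible trace $t'$ with the prefix pair $q,q^{-1}$. You instead convert the two prefix conditions into the single trace equation $p\,y_2=p^{-1}y_1$ with $y_1\,I\,y_2$ and descend on the ``conjugator'': two further Levi decompositions, each collapsed by the observation that a letter can never be independent of its own inverse, yield the structural form $A=y_1'^{-1}Y_2\,y_1'$ together with the same equation for $Y_2^{-1}$, which is strictly shorter when $y_1'\ne\varepsilon$, while the degenerate case $y_1'=\varepsilon$ reduces to $A=A^{-1}$ in $\IRR(R)$, which you handle correctly. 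Both proofs are length descents hinging on $(a,a^{-1})\in D$ and on factors of irreducible traces being irreducible; the paper's is shorter because it descends on $t$ itself, whereas yours buys a reusable statement about equations of the form $Aw_1=A^{-1}w_2$ at the cost of a nested induction and a separate torsion-type case. Two cosmetic points: your opening induction on $|p|$ is never actually used (the auxiliary claim already yields $p=\varepsilon$ outright), and you should say explicitly that $p\in\IRR(R)$ because it is a prefix of $t\in\IRR(R)$, since that is what licenses applying the auxiliary claim to $A=p$.
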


\begin{proof}
  Suppose for contradiction that
  $$
  T=\{t \in \IRR(R) \mid \exists p\in \dM(\Sigma^{\pm
    1},I) \setminus \{ \varepsilon\} : p\preceq t \wedge p^{-1}\preceq
  t \} \neq\emptyset.
  $$ 
  Let $t\in T$ with $|t|$ minimal and $p\in \dM(\Sigma^{\pm 1},I)$
  such that $p \neq \varepsilon$, $p\preceq t$, and $p^{-1}\preceq
  t$. If $|p|=1$ then $p=a$ for some $a\in\Sigma^{\pm 1}$ and hence
  $a\preceq t$ and $a^{-1}\preceq t$, a contradiction since
  $aDa^{-1}$. If $|p|=2$ then $p=a_1a_2$ for some
  $a_1,a_2\in\Sigma^{\pm 1}$. Since $t$, and therefore $p$ is
  $R$-irreducible, we have $a_1 \neq a_2^{-1}$. Since $a_1\in \min(t)$ and
  $a_2^{-1}\in\min(t)$ we have $a_1 I a_2^{-1}$, i.e., $a_1 I
  a_2$. Hence, also $a_2 \in \min(t)$, which contradicts
  $a_2^{-1}\in\min(t)$. So assume that $|p|>2$. Let $a\in
  \min(p)$. Then $a\in \min(t)$, and there exist traces $y, t'$ with
  $t = at' = p^{-1} y$.  If $a \not\in \min(p^{-1})$, then $a \in
  \min(y)$ and $a I p^{-1}$. But the latter independence contradicts
  $a^{-1} \in \alp(p^{-1})$. Hence $a \in \min(p^{-1})$, i.e., $a^{-1}
  \in \max(p)$. Thus, we can write $p=aqa^{-1}$ and
  $p^{-1}=aq^{-1}a^{-1}$ with $q \neq \varepsilon$.  Since $aqa^{-1} =
  p \preceq at'$, $aq^{-1}a^{-1} = p^{-1} \preceq at'$ and
  $\dM(\Sigma^{\pm 1},I)$ is cancellative, we have a $q\preceq t'$,
  $q^{-1}\preceq t'$. Since $q \neq \varepsilon$, we have a
  contradiction to the fact that $|t|$ is minimal.  \qed
\end{proof}

\begin{example}\label{exacore}
We take the independence alphabet from Example \ref{Ex traces1} and
consider the trace
$x=[c^{-1}d^{-1}a^{-1}ba^{-1}cabdc^{-1}d^{-1}a^{-1}b^{-1}dca]_I\in\dM(\Sigma^{\pm
1},I)$, whose dependence graph looks as follows:
\begin{center}
\setlength{\unitlength}{1mm}
\begin{picture}(76,13)(-6,-3)
\gasset{Nadjust=wh,Nadjustdist=0.3,Nfill=n,Nframe=n,AHnb=1,linewidth=.2}
\node(oc1)(-6,7){$c^{-1}$}
\node(od1)(9,7){$d^{-1}$}
\node(oc2)(17,7){$c$}
\node(od2)(30,7){$d$}
\node(oc3)(42,7){$c^{-1}$}
\node(od3)(55,7){$d^{-1}$}
\node(od4)(63,7){$d$}
\node(oc4)(70,7){$c$}
\node(ua1)(-6,0){$a^{-1}$}
\node(ub1)(3,0){$b$}
\node(ua2)(15,0){$a^{-1}$}
\node(ua3)(24,0){$a$}
\node(ub2)(30,0){$b$}
\node(ua4)(44,0){$a^{-1}$}
\node(ub3)(58,0){$b^{-1}$}
\node(ua5)(70,0){$a$}

\drawedge(oc1,ub1){}
\drawedge(ub1,oc2){}
\drawedge(oc2,ub2){}
\drawedge(ub2,oc3){}
\drawedge(oc3,ub3){}
\drawedge(ub3,oc4){}

\drawedge(oc1,od1){}
\drawedge(od1,oc2){}
\drawedge(oc2,od2){}
\drawedge(od2,oc3){}
\drawedge(oc3,od3){}
\drawedge(od3,od4){}
\drawedge(od4,oc4){}
\drawedge(ua1,ub1){}
\drawedge(ub1,ua2){}
\drawedge(ua2,ua3){}
\drawedge(ua3,ub2){}
\drawedge(ub2,ua4){}
\drawedge(ua4,ub3){}
\drawedge(ub3,ua5){}
\drawqbezier[AHnb=0,linewidth=.1,dash={.7}0](14,-2,18,-6,23,-2)
\drawqbezier[AHnb=0,linewidth=.1,dash={.7}0](55,9,59,12,63,9)
\end{picture}
\end{center}
Then the $R$-reduction of $x$ is
$\NF_R(x)=[c^{-1}d^{-1}a^{-1}bcbdc^{-1}a^{-1}b^{-1}ca]_I$:
\begin{center}
\setlength{\unitlength}{1mm}
\begin{picture}(76,13)(-6,-3)
\gasset{Nadjust=wh,Nadjustdist=0.3,Nfill=n,Nframe=n,AHnb=1,linewidth=.2}
\node(oc1)(-6,7){$c^{-1}$}
\node(od1)(9,7){$d^{-1}$}
\node(oc2)(17,7){$c$}
\node(od2)(30,7){$d$}
\node(oc3)(42,7){$c^{-1}$}
\node(oc4)(70,7){$c$}
\node(ua1)(-6,0){$a^{-1}$}
\node(ub1)(3,0){$b$}
\node(ub2)(30,0){$b$}
\node(ua4)(44,0){$a^{-1}$}
\node(ub3)(58,0){$b^{-1}$}
\node(ua5)(70,0){$a$}
\drawedge(oc1,ub1){}
\drawedge(ub1,oc2){}
\drawedge(oc2,ub2){}
\drawedge(ub2,oc3){}
\drawedge(oc3,ub3){}
\drawedge(ub3,oc4){}
\drawedge(oc1,od1){}
\drawedge(od1,oc2){}
\drawedge(oc2,od2){}
\drawedge(od2,oc3){}
\drawedge(ua1,ub1){}
\drawedge(ub2,ua4){}
\drawedge(ua4,ub3){}
\drawedge(ub3,ua5){}
\drawqbezier[AHnb=0,linewidth=.1,dash={.7}0](3,12,7,3,3,-5)
\drawqbezier[AHnb=0,linewidth=.1,dash={.7}0](54,12,50,3,54,-5)
\end{picture}
\end{center}
Hence, the core of $x$ is $\CR(x)=[d^{-1}cbdc^{-1}a^{-1}]_I$ and looks
as follows:
\begin{center}
\setlength{\unitlength}{1mm}
\begin{picture}(35,13)(9,-3)
\gasset{Nadjust=wh,Nadjustdist=0.3,Nfill=n,Nframe=n,AHnb=1,linewidth=.2}
\node(od1)(9,7){$d^{-1}$}
\node(oc2)(17,7){$c$}
\node(od2)(30,7){$d$}
\node(oc3)(42,7){$c^{-1}$}
\node(ub2)(30,0){$b$}
\node(ua4)(44,0){$a^{-1}$}
\drawedge(oc2,ub2){}
\drawedge(ub2,oc3){}
\drawedge(od1,oc2){}
\drawedge(oc2,od2){}
\drawedge(od2,oc3){}
\drawedge(ub2,ua4){}
\end{picture}
\end{center}
Note that we have
$\NF_R (x)\sqcap \NF_R(x^{-1})=c^{-1}a^{-1}b$
  and hence
  \begin{align*}
    \NF_R\Big(\big(\NF_R(x)\sqcap& \NF_R(x^{-1})\big)^{-1}\NF_R(x)\big(\NF_R(x)\sqcap \NF_R(x^{-1}\big)\Big)\\
    &=\NF_R\Big(\big(c^{-1}a^{-1}b\big)^{-1}\big(c^{-1}d^{-1}a^{-1}bcbdc^{-1}a^{-1}b^{-1}ca\big)\big(c^{-1}a^{-1}b\big)\Big)\\
    &=d^{-1}cbdc^{-1}a^{-1}=\CR(x).
  \end{align*}
This fact holds for every trace, and shall be proven next.
\end{example}

\begin{lemma} \label{main core lemma} 
  Let $x \in \IRR(R)$ and $d=x\sqcap x^{-1}$.  Then
  $\NF_R(d^{-1}xd) = \CR(x)$.
\end{lemma}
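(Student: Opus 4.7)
The plan is to reduce the statement to showing $d=u$, where $u$ is the trace from the canonical decomposition $x=uyu^{-1}$ of the core. Since $x\in\IRR(R)$ we have $\NF_R(x)=x$, and the definition of $\CR$ produces traces $u,y\in\dM(\Sigma^{\pm 1},I)$ with $x=uyu^{-1}$ in $\dM(\Sigma^{\pm 1},I)$ and $y$ cyclically $R$-irreducible (in particular $y\in\IRR(R)$). Once $d=u$ is established, $d^{-1}xd=u^{-1}uyu^{-1}u$, which $R$-reduces to $y$ by cancelling the two $u^{-1}u$ blocks; since $y\in\IRR(R)$ this gives $\NF_R(d^{-1}xd)=y=\CR(x)$.

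The inclusion $u\preceq d$ is immediate, as $u$ is a prefix of both $x=uyu^{-1}$ and $x^{-1}=uy^{-1}u^{-1}$ and hence of their infimum. Writing $d=up$, I must show $p=\varepsilon$. Cancellativity of $\dM(\Sigma^{\pm 1},I)$ turns $d\preceq x$ and $d\preceq x^{-1}$ into $p\preceq yu^{-1}$ and $p\preceq y^{-1}u^{-1}$. Assume for contradiction that $p\neq\varepsilon$, pick $a\in\min(p)$, and note that then $a\in\min(yu^{-1})\cap\min(y^{-1}u^{-1})$.

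The main technical step, and what I expect to be the key obstacle, is a case analysis identifying where this minimal $a$ can sit. A letter belongs to $\min(yu^{-1})$ either because it lies in $\min(y)$ or because it lies in $\min(u^{-1})$ and commutes with every letter of $y$ (i.e.\ $aIy$); the analogous dichotomy holds for $\min(y^{-1}u^{-1})$ with $aIy^{-1}$, which is the same as $aIy$. Of the four resulting combinations, three are impossible: $a\in\min(y)\cap\min(y^{-1})$ contradicts the cyclic $R$-irreducibility of $y$, and the two mixed combinations give $aIy$ together with $a$ or $a^{-1}$ appearing in $\alp(y)$, forcing $a$ to be independent of $a$ or of $a^{-1}$ and so violating the irreflexivity of $I$ on $(\Sigma^{\pm 1},I)$.

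This leaves only $a\in\min(u^{-1})$ with $aIy$, which itself contradicts $x\in\IRR(R)$: from $a\in\min(u^{-1})$ one has $a^{-1}\in\max(u)$, so $u=u'a^{-1}$ for some $u'$; using $a^{-1}Iy$ one rewrites in $\dM(\Sigma^{\pm 1},I)$ as $x=u'a^{-1}yau'^{-1}=u'ya^{-1}au'^{-1}$, and the explicit factor $a^{-1}a$ makes $x$ $R$-reducible. This final contradiction forces $p=\varepsilon$, hence $d=u$, and the first paragraph completes the proof.
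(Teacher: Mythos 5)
Your proof is correct, but it runs in the opposite direction from the paper's. The paper works forward from $d$: writing $x=dy'=z^{-1}d^{-1}$ with $y'=x\setminus x^{-1}$ and $z=x^{-1}\setminus x$, it applies Levi's Lemma to these two factorizations, kills the cross factor $v_1$ using Lemma~\ref{prefixinverseepsilon}, and thereby \emph{constructs} the decomposition $x=dv_2d^{-1}$ with $\min(v_2)\cap\min(v_2^{-1})=\emptyset$ --- so existence of the core and the identity $\CR(x)=v_2=\NF_R(d^{-1}xd)$ come out in one stroke. You instead start from the core decomposition $x=uyu^{-1}$ and prove $d=u$ directly: the inclusion $u\preceq d$ is immediate, and for $d\preceq u$ your four-way case analysis on a minimal letter of $p$ (using cyclic irreducibility of $y$, irreflexivity of $I$ on $\Sigma^{\pm1}$, and $R$-irreducibility of $x$) is a clean, elementary substitute for the paper's combination of Levi's Lemma with Lemma~\ref{prefixinverseepsilon}; all four cases check out. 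The trade-off is that your argument presupposes that the decomposition $x=uyu^{-1}$ with $y$ cyclically $R$-irreducible exists, which the paper's definition of $\CR$ only states conditionally (existence and uniqueness are delegated to [Wra88]), whereas the paper's proof establishes existence as a byproduct. Since the lemma's statement already refers to $\CR(x)$, this is a legitimate assumption rather than a gap, but it is worth being explicit that you are invoking existence of the core, not just its uniqueness.
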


\begin{proof}
  Let $d=x\sqcap x^{-1}$. Thus, there are traces $y,z$ such that $dy
  =x = z^{-1}d^{-1}$ and $\min(y) \cap \min(z) = \emptyset$. By Levi's
  Lemma it follows that there are traces $u,v_1,v_2,w$ such that
  $uv_1=d$, $v_2w=y$, $uv_2=z^{-1}$, $v_1w=d^{-1}$, and
  $v_1Iv_2$. Hence we have $v_1^{-1}\preceq d^{-1}$ and $v_1\preceq
  d^{-1}$ and since $x$ is $R$-irreducible, so is $d^{-1}$. We can apply
  Lemma~\ref{prefixinverseepsilon} to infer that $v_1=\varepsilon$.

  It follows that $u = d$, $w = d^{-1}$, and thus $x = dy = d v_2 w =
  d v_2 d^{-1}$.  Moreover, since $\min(v_2w) \cap \min (v_2^{-1}
  u^{-1}) = \min(y) \cap \min(z) = \emptyset$, we have
  $\min(v_2)\cap\min(v_2^{-1})=\emptyset$. Hence, $v_2$ is the core of
  $x$. Moreover since $x$ (and therefore $v_2$) is $R$-irreducible, we
  have $\NF_R(d^{-1}xd)=\NF_R(d^{-1}dv_2d^{-1}d)=v_2$.  \qed
\end{proof}
We now easily obtain:

\begin{corollary}\label{computecore}
  Fix an independence alphabet $(\Sigma^{\pm 1},I)$.  Then, the
  following problem can be solved in polynomial time:

  \medskip
  
  \noindent
  INPUT: An SLP $\dA$ \\
  OUTPUT: An SLP $\dB$ with $[\val(\dB)]_I = \CR([\val(\dA)]_I)$
\end{corollary}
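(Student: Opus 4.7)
The plan is to assemble the corollary directly from Lemma~\ref{main core lemma} together with the compressed algorithmic primitives developed in earlier sections. Concretely, given the input SLP $\dA$, I would proceed as follows.

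First, apply Theorem~\ref{R-reduction} to $\dA$ to obtain, in polynomial time, an SLP $\dA'$ with $[\val(\dA')]_I = \NF_R([\val(\dA)]_I)$; in particular $x := [\val(\dA')]_I \in \IRR(R)$. This step is needed because Lemma~\ref{main core lemma} requires an $R$-irreducible input, and because $\CR$ is defined via the normal form (so replacing $\dA$ by $\dA'$ does not change the core we wish to compute). Next, form the SLP $(\dA')^{-1}$ in the canonical way described in Section~\ref{S SLP}, so that $[\val((\dA')^{-1})]_I = x^{-1}$.

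Then, invoke Theorem~\ref{theo-compressed-inf} on the pair $\dA', (\dA')^{-1}$ to compute, in polynomial time, an SLP $\dD$ with $[\val(\dD)]_I = x \sqcap x^{-1} =: d$. Build the SLP $(\dD)^{-1}$ for $d^{-1}$, and then construct an SLP $\dC$ for the concatenation $\val((\dD)^{-1})\,\val(\dA')\,\val(\dD)$ simply by introducing a new start symbol with a single production $S \to (\dD)^{-1}\,\dA'\,\dD$ (taking the disjoint union of the three rule sets). Finally, apply Theorem~\ref{R-reduction} once more to $\dC$ to obtain an SLP $\dB$ with $[\val(\dB)]_I = \NF_R([\val(\dC)]_I) = \NF_R(d^{-1} x d)$. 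By Lemma~\ref{main core lemma} this equals $\CR(x) = \CR([\val(\dA)]_I)$, as required.

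Each of the five polynomial-time steps ($R$-reduction, inversion, infimum, concatenation, second $R$-reduction) produces an SLP whose size is polynomially bounded in the input, and the only substantive mathematical content, namely the identity $\NF_R(d^{-1} x d) = \CR(x)$ for $R$-irreducible $x$, is supplied by Lemma~\ref{main core lemma}. There is no real obstacle here: the proof is a straightforward composition of previously established routines, and the ``hard work'' has already been carried out in Theorems~\ref{R-reduction} and \ref{theo-compressed-inf} and in Lemma~\ref{main core lemma}.
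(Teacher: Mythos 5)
Your proof is correct and follows exactly the same route as the paper's: reduce $\dA$ to an $R$-irreducible SLP via Theorem~\ref{R-reduction}, compute the infimum $x \sqcap x^{-1}$ via Theorem~\ref{theo-compressed-inf}, form the conjugate $d^{-1}xd$, and $R$-reduce once more, with Lemma~\ref{main core lemma} supplying the key identity. No differences worth noting.
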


\begin{proof}
  By Theorem~\ref{R-reduction} we can assume that $[\val(\dA)]_I \in \IRR(R)$.
  Then, using Theorem~\ref{theo-compressed-inf} we can
  compute in polynomial time an SLP $\dP$ with
  $[\val(\dP)]_I=[\val(\dA)]_I\sqcap[\val(\dA)^{-1}]_I$.  By
  Lemma~\ref{main core lemma} we have $\CR([\val(\dA)]_I)
  =\NF_R([\val(\dP)^{-1}\val(\dA)\val(\dP)]_I)$. Finally, by
  Theorem~\ref{R-reduction} we can compute in polynomial time an SLP
  $\dB$ such that
  $[\val(\dB)]_I=\NF_R([\val(\dP)^{-1}\val(\dA)\val(\dP)]_I)$.  \qed
\end{proof}

\section{A pattern matching algorithm for connected patterns}\label{sec:connected}

Our second tool for proving Theorem~\ref{decidesccp} is a pattern
matching algorithm for compressed traces.
For two traces $v$ and $w$ we say that $v$ is a factor of $w$ if there
is some trace $u$ such that $uv\preceq w$.  We consider the following
problem and show that it can be solved in polynomial time if the
independence alphabet $(\Sigma,I)$ satisfies certain conditions.

\medskip
\noindent 
INPUT: An independence alphabet $(\Sigma,I)$ and two SLPs $\dT$ and $\dP$
over $\Sigma$.\\
QUESTION: Is $[\val(\dP)]_I$ a factor of $[\val(\dT)]_I$?  
\medskip

\noindent
We write $\alp(\dT)$ and $\alp(\dP)$ for $\alp(\val(\dT))$ and
$\alp(\val(\dP))$, respectively.  We may assume that
$\Sigma=\alp(\dT)$ and that $\Sigma$ is connected. Otherwise we simply
solve several instances of the latter problem separately. Also, we
assume in the following that the SLPs $\dT=(V,\Sigma,S,P)$ and $\dP$
are in Chomsky normal form.  Let $\Gamma\subseteq \Sigma$. We denote
by $\pi_{\Gamma,}$ the homomorphism $\pi_{\Gamma}:\M(\Sigma,I)\to
\M(\Gamma,I\cap (\Gamma\times \Gamma))$ with $\pi_{\Gamma}(a)=a$ for
$a\in \Gamma$ and $\pi_{\Gamma}(a)=\epsilon$ for $a\in
\Sigma\setminus\Gamma$.  Let $V^\Gamma=\{X^\Gamma\mid  X\in V\}$ be a
disjoint copy of $V$. For each production $p\in P$ define a new
production $p^\Gamma$ as follows. If $p$ is of the form $X\to a$
$(a\in \Sigma)$, then let $p^\Gamma=(X^\Gamma \to a)$ in case $a\in
\Gamma$ and $p^\Gamma=(X^\Gamma\to \epsilon)$ otherwise. Moreover, if
$p\in P$ is of the form $X\to YZ$ $(X,Y,Z\in V)$ define
$p^\Gamma=(X^\Gamma\to Y^\Gamma Z^\Gamma)$.  We denote with $\dT^\Gamma$
the SLP $(V^\Gamma,\Gamma,S^\Gamma,P^\Gamma)$ where
$P^\Gamma=\{p^\Gamma\mid  p\in P\}$. 
Obviously, $\val(\dT^{\Gamma})=\pi_\Gamma(\val(\dT))$.

In order to develop a polynomial time algorithm for the problem stated
above we need a succinct representation for an occurrence of $\dP$ in
$\dT$. Since $[\val(\dP)]_I$ is a factor of $[\val(\dT)]_I$ iff there
is a prefix $u\preceq[\val(\dT)]_I$ such that $u[\val(\dP)]_I\preceq
[\val(\dT)]_I$, we will in fact compute prefixes with the latter
property and represent a prefix $u$ by its Parikh image $(|u|_a)_{a\in
  \Sigma}$.  Hence we say a sequence $O=(O_a)_{a\in \Sigma}\in
\N^{\Sigma}$ is an \emph{occurrence} of a trace $v$ in a trace $w$ iff
there is a prefix $u\preceq w$ such that $u v\preceq w$, and
$O=(|u|_a)_{a\in \Sigma}$.  For $\Gamma\subseteq \Sigma$ we write
$\pi_{\Gamma}(O)$ for the restriction
$(O_a)_{a\in\Gamma}$. Furthermore, we say that $O$ is an occurrence of
$\dP$ in $\dT$ if $O$ is an occurrence of $[\val(\dP)]_I$ in
$[\val(\dT)]_I$. Note that our definition of an occurrence of $\dP$ in
$\dT$ does not exactly correspond to the intuitive notion of an
occurrence as a convex subset of the dependence graph of
$[\val(\dT)]_I$. In fact, to a convex subset of the dependence graph
of $[\val(\dT)]_I$, which is isomorphic to the dependence graph
of $[\val(\dP)]_I$, there might correspond several
occurrences $O$, since for an $a\in \Sigma$ that is independent of
$\alp(\dP)$ we might have several possibilities for the value
$O_a$. However, if we restrict to letters that are dependent on
$\alp(\dP)$, then our definition of an occurrence coincides with the
intuitive notion.

Let $X$ be a nonterminal of $\dT$ with production $X\to YZ$ and let
$O$ be an occurrence of $[\val(\dP)]_I$ in $[\val(X)]_I$.  If there
are $a,b\in \alp(\dP)$ such that $O_a<|\val(Y)|_a$ and
$O_b+|\val(\dP)|_b>|\val(Y)|_b$, then we say that $O$ is an occurrence
of $\dP$ \emph{at the cut} of $X$.  We assume w.l.o.g. that
$|\val(\dP)|\geq 2$, otherwise the problem reduces simply to checking
whether there occurs a certain letter in $\val(\dT)$.  This
assumption  implies that $[\val(\dP)]_I$ is a factor of $[\val(\dT)]_I$ if
and only if there is a nonterminal $X$ of $\dT$ for which there is an occurrence
of $\dP$ at the cut of $X$.

\begin{example}\label{exapat1}
  We take the independence alphabet from Example \ref{Ex traces1}
  again. Let $X$ be a nonterminal with
  $\val(X)=acbc\;ad\;cbc\;acbc\;acbc\;acbc\;acb|c\;acbc\;acbc\;acbc\;acb\;dc$
  where '$|$' denotes the cut of $X$ and
  $\val(\dP)=acbc\;acbc\;acbc\;acbc\;acbc$. Then the occurrences of
  $\val(\dP)$ at the cut of $X$ are $(1,1,2,1)$,$(2,2,4,1)$,
  $(3,3,6,1)$ and $(4,4,8,1)$ where the positions in a tuple
  correspond to the letters in our alphabet in the order
  $a,b,c,d$. We will see later how to construct them.
\end{example}

\begin{lemma}[\cite{LiuWraZeg90}]
  \label{lem:liuwrazeg}
  Let $v$ and $w$ be traces over $\Sigma$. A sequence $(n_a)_{a\in
    \Sigma}\in \N^{\Sigma}$ is an occurrence of $v$ in $w$ if and only
  if $(n_a,n_b)$ is an occurrence of $\pi_{\{a,b\}}(v)$ in
  $\pi_{\{a,b\}}(w)$ for all $(a,b)\in D$.
\end{lemma}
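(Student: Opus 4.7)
The forward direction is immediate: given a factorization $w = uvz$ in $\dM(\Sigma,I)$ with $|u|_a = n_a$ for all $a$, applying the homomorphism $\pi_{\{a,b\}}$ yields $\pi_{\{a,b\}}(w) = \pi_{\{a,b\}}(u)\,\pi_{\{a,b\}}(v)\,\pi_{\{a,b\}}(z)$, and since $|\pi_{\{a,b\}}(u)|_c = n_c$ for $c \in \{a,b\}$, this factorization witnesses that $(n_a,n_b)$ is an occurrence of $\pi_{\{a,b\}}(v)$ in $\pi_{\{a,b\}}(w)$.

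For the converse, I plan to argue inside the dependence graph $D_w$. For each $a \in \Sigma$ the $a$-labeled vertices of $D_w$ form a chain (since $(a,a)\in D$), so I enumerate them $a_1 < a_2 < \cdots$ and set
$$
S = \bigcup_{a\in\Sigma}\{a_i : 1\le i\le n_a\},\qquad S' = \bigcup_{a\in\Sigma}\{a_i : 1\le i\le n_a+|v|_a\}.
$$
It suffices to show: (i) $S$ and $S'$ are downward-closed in $D_w$, so that they correspond to prefixes $u \preceq u' \preceq w$ with $|u|_a = n_a$ and $|u'|_a = n_a + |v|_a$; and (ii) the sub-poset of $D_w$ induced by $S'\setminus S$ represents the trace $v$. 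Together (i) and (ii) produce a factorization $w = uvz$ in $\dM(\Sigma,I)$ with $|u|_a = n_a$, establishing that $(n_a)_a$ is an occurrence of $v$ in $w$.

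For (i), suppose there is an edge $y \to x$ in $D_w$ with $x = a_i \in S$ (so $i \le n_a$) and $y = b_j$. Then $(a,b)\in D$, so every pair of $\{a,b\}$-labeled vertices of $D_w$ is comparable; reading this chain in order gives exactly $\pi_{\{a,b\}}(w)$. By hypothesis, $\pi_{\{a,b\}}(w)$ has a prefix containing precisely $n_a$ $a$'s and $n_b$ $b$'s, so since the $j$-th $b$ precedes the $i$-th $a$ in that chain and $i \le n_a$, we must have $j \le n_b$, i.e., $y \in S$. The same reasoning handles $S'$. For (ii) I would invoke the classical projection theorem for traces (see e.g.\ \cite{DieRoz95}): two elements of $\dM(\Sigma,I)$ coincide iff they share the same Parikh vector and agree on $\pi_{\{a,b\}}$ for every $(a,b)\in D$. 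The Parikh vector of $S'\setminus S$ is $(|v|_a)_a$ by construction, and for each $(a,b)\in D$ the $\{a,b\}$-vertices of $S'\setminus S$, read along their chain, form the factor of $\pi_{\{a,b\}}(w)$ strictly between its initial segment containing $n_a$ $a$'s and $n_b$ $b$'s and its initial segment containing $n_a+|v|_a$ $a$'s and $n_b+|v|_b$ $b$'s; by the pair-occurrence hypothesis this factor equals $\pi_{\{a,b\}}(v)$.

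The main obstacle is the bookkeeping in (ii): one has to check that $\pi_{\{a,b\}}$ preserves the order of $a$-positions and of $b$-positions separately, so that the $(n_a+i)$-th $a$-vertex of $D_w$ is also the $(n_a+i)$-th $a$ of $\pi_{\{a,b\}}(w)$, and then use this to match the $\{a,b\}$-vertices of $S'\setminus S$ bijectively and order-preservingly with the positions of the factor $\pi_{\{a,b\}}(v)$ inside $\pi_{\{a,b\}}(w)$. Once these identifications are in place, the projection theorem closes the proof.
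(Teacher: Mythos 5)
The paper does not prove this lemma at all: it is imported verbatim from \cite{LiuWraZeg90}, so there is no in-paper argument to compare yours against. Your proof is correct and is essentially the standard one: the forward direction by applying the homomorphisms $\pi_{\{a,b\}}$ to a factorization $w=uvz$, and the converse by building the downward-closed sets $S\subseteq S'$ in the dependence graph, checking closure edgewise via the pairwise hypothesis, and identifying the convex piece $S'\setminus S$ with $v$ through the classical projection lemma of Cori--Perrin (traces coincide iff all their projections onto dependence pairs coincide). Two points you leave implicit but that are worth stating: (a) the well-definedness of $S'$, i.e.\ $n_a+|v|_a\le|w|_a$, comes from the diagonal pairs $(a,a)\in D$, which are included since $D$ is reflexive here; and (b) because $\{a,b\}$ is a dependence clique, $\pi_{\{a,b\}}(w)$ lives in a free monoid, so the prefix with letter counts $(n_a,n_b)$ is simply the initial segment of length $n_a+n_b$ and is unique --- this is exactly what pins down the factor that must equal $\pi_{\{a,b\}}(v)$ and disposes of the bookkeeping you flag at the end of (ii).
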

\noindent
An \emph{arithmetic progression} is a subset of $\N^{\Sigma}$ of the form 
$$
\{ (i_a)_{a\in \Sigma}+k\cdot (d_a)_{a\in \Sigma}\mid  0\leq
k\leq \ell\}.
$$ 
This set can be represented by the triple
$((i_a)_{a\in\Sigma},(d_a)_{a\in\Sigma},\ell)$. The 
{\em descriptional size} $|((i_a)_{a\in\Sigma},(d_a)_{a\in\Sigma},\ell)|$ of the arithmetic
progression $((i_a)_{a\in\Sigma},(d_a)_{a\in\Sigma},\ell)$ is
$\log_2(\ell)+\sum_{a\in \Sigma}(\log_2(i_a)+\log_2(d_a))$.
In Example~\ref{exapat1}, the occurrences of $\val(\dP)$ at the cut of
$X$ form the arithmetic progression $\big((1,1,2,1),(1,1,2,0),3\big)$.

We will use the last lemma in order to compute the occurrences of
$\dP$ in $\dT$ in form of a family of arithmetic progressions. To this
aim, we follow a similar approach as Genest and Muscholl for message
sequence charts~\cite{GenMus08}. In particular Lemma~\ref{lem:sameNT}
below was inspired by \cite[Proposition~1]{GenMus08}.

Throughout the rest of this section we make the following assumption:
\begin{equation}
  \label{assumption}
  \text{$\alp(\dP)$ is
    connected  and $\{a,b\}\cap\alp(\dP)\ne
    \emptyset$ for all $(a,b) \in D$ with $a\ne b$.}
\end{equation}
Let $X$ be a nonterminal of $\dT$ and let $O$ be an occurrence of
$\dP$ at the cut of $X$.  Since the pattern is connected there must be
some $a,b\in \Sigma$ with $(a,b)\in D$ such that $\pi_{\{a,b\}}(O)$ is
at the cut of $X^{\{a,b\}}$.  We will therefore compute occurrences of
$\pi_{\{a,b\}}(\val(\dP))$ at the cut of $X^{\{a,b\}}$.  It is well
known that the occurrences of $\pi_{\{a,b\}}(\val(\dP))$ at the cut of
$X^{\{a,b\}}$ form an arithmetic progression
$((i_a,i_b),(d_a,d_b),\ell)$ and that $\pi_{\{a,b\}}(\val(\dP))$ is of
the form $u^nv$ for some $n\geq \ell$ and strings $u,v\in\{a,b\}^*$
with $v\preceq u$, $|u|_a=d_a$ and $|u|_b=d_b$.  Moreover, the
arithmetic progression $((i_a,i_b),(d_a,d_b),\ell)$ can be computed in
time $|\dT|^2|\dP|$ (see~\cite{Lif07}\footnote{In fact,
  in~\cite{Lif07} it was shown that the arithmetic progression
  $(i_a+i_b,d_a+d_b,\ell)$ can be computed in polynomial time. Observe
  that from this the arithmetic progression
  $((i_a,i_b),(d_a,d_b),\ell)$ can easily be computed.}).  Now suppose
we have computed the occurrences of $\pi_{\{a,b\}}(\val(\dP))$ at the
cut of $X^{\{a,b\}}$ in form of an arithmetic progression.  The
problem now is how to find (for the possibly exponentially many
occurrences in the arithmetic progression) matching occurrences of
projections onto all other pairs in $D$.

The following lemma states
that either there is a pair $(a,b)\in D$ such that the projection onto
$\{a,b\}$ is the first or the last element of an arithmetic
progression, or all projections lie at the cut of the same
nonterminal.

\begin{lemma}\label{lem:sameNT}
  Let $X$ be a nonterminal of $\dT$ and let $O$ be an occurrence of
  $\dP$ at the cut of $X$. Then either
 \begin{enumerate}[(i)]
 \item $\pi_{\{a,b\}}(O)$ is at the cut of $X^{\{a,b\}}$ for all
   $(a,b)\in D$ with $a\ne b$, or
 \item there are $a,b\in \alp(\dP)$ with $(a,b)\in D$ such that
   $\pi_{\{a,b\}}(O)$ is the first or last element of the arithmetic
   progression of occurrences of $\pi_{\{a,b\}}(\val(\dP))$ at the cut
   of $X^{\{a,b\}}$.
 \end{enumerate}
\end{lemma}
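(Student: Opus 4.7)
The plan is to derive (ii) from the failure of (i). Suppose there is a pair $(a,b) \in D$ with $a \neq b$ such that $\pi_{\{a,b\}}(O)$ is not at the cut of $X^{\{a,b\}}$; then this projection lies entirely in $\pi_{\{a,b\}}(\val(Y))$ or entirely in $\pi_{\{a,b\}}(\val(Z))$. Interchanging $Y$ and $Z$ if necessary (which swaps ``first'' and ``last'' in each arithmetic progression), I may assume the former, so that $O_c + |\val(\dP)|_c \leq |\val(Y)|_c$ for every $c \in \{a,b\} \cap \alp(\dP)$. I then set $L := \{c \in \alp(\dP) : O_c + |\val(\dP)|_c \leq |\val(Y)|_c\}$. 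By~(\ref{assumption}), $\{a,b\} \cap \alp(\dP) \subseteq L$ is non-empty, and since $O$ is at the cut of $X$ some letter of $\alp(\dP)$ must lie outside $L$; hence $\emptyset \neq L \subsetneq \alp(\dP)$. The connectedness of $\alp(\dP)$ in the dependence graph then yields an edge $(e,f) \in D$ with $e \in L$ and $f \in \alp(\dP) \setminus L$.

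I will check that $\pi_{\{e,f\}}(O) = (O_e, O_f)$ is at the cut of $X^{\{e,f\}}$: the inequalities $O_e < |\val(Y)|_e$ (which follows from $e \in L$ together with $|\val(\dP)|_e \geq 1$) and $O_f + |\val(\dP)|_f > |\val(Y)|_f$ (which follows from $f \notin L$) supply the two required witnesses. I will then argue that $(O_e, O_f)$ is the \emph{first} element of the arithmetic progression of at-cut occurrences of $\pi_{\{e,f\}}(\val(\dP))$ in $\pi_{\{e,f\}}(\val(X))$. This reduces to showing that the candidate predecessor $(O_e - d_e, O_f - d_f)$ does not itself satisfy the at-cut condition for $X^{\{e,f\}}$, i.e., that no $c \in \{e, f\}$ fulfils $(O_c - d_c) + |\val(\dP)|_c > |\val(Y)|_c$. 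For $c = e$ this is immediate: $(O_e - d_e) + |\val(\dP)|_e \leq |\val(Y)|_e - d_e < |\val(Y)|_e$, using $d_e \geq 1$ (a consequence of $e \in \alp(\dP)$).

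The main obstacle will be ruling out $c = f$, which amounts to bounding the $f$-overhang $O_f + |\val(\dP)|_f - |\val(Y)|_f$ by the period $d_f$. To handle this, I will write $\pi_{\{e,f\}}(\val(\dP)) = u_p^n v_p$ in its canonical periodic form with $v_p$ a prefix of $u_p$ and $|u_p|_e = d_e$, $|u_p|_f = d_f$. Since $e \in L$ forces every $e$ of the pattern projection to lie in $\pi_{\{e,f\}}(\val(Y))$, the overhang equals precisely the number of $f$'s appearing after the last $e$ of $\pi_{\{e,f\}}(\val(\dP))$. A short case analysis on whether $v_p$ is empty and whether $v_p$ contains an $e$ identifies these trailing $f$'s as a disjoint subset of the $d_f$ $f$-positions of one copy of $u_p$ -- namely the $f$-suffix of $u_p$, combined, when $v_p$ is a non-empty all-$f$ prefix, with the $f$-prefix of $u_p$. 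Hence the overhang is at most $d_f$, the predecessor fails the at-cut condition, and $(O_e, O_f)$ is the first element of its arithmetic progression, which yields~(ii).
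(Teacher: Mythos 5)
Your proof is correct, but it follows a genuinely different route from the paper's. The paper proves the contrapositive of a \emph{stronger} statement by induction on $|\alp(\dP)|$: if (ii) fails, then $O_a<|\val(Y)|_a<O_a+|\val(\dP)|_a$ for \emph{every} $a\in\alp(\dP)$, from which (i) follows for all dependent pairs at once via assumption~\eqref{assumption}. Its base case $|\alp(\dP)|=2$ sandwiches $|\val(Y)|_a$ between the first and last occurrences of the arithmetic progression, and its inductive step deletes a leaf of a spanning tree of the dependence graph on $\alp(\dP)$. You instead negate (i) directly and, without induction, extract the set $L$ of pattern letters whose portion of the occurrence falls entirely inside $\val(Y)$, use connectedness of $\alp(\dP)$ to find a dependence edge $(e,f)$ crossing out of $L$, and prove extremality of the projection on that edge by bounding the overhang into $\val(Z)$ by the period $d_f$ via the $u^nv$-form of the projected pattern. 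Both arguments lean on the same periodicity fact; yours is more direct and produces the witnessing pair for (ii) explicitly, while the paper's yields the per-letter sandwich as a reusable intermediate. Three small repairs to yours: (a) the overhang is \emph{at most}, not exactly, the number of trailing $f$'s, since some trailing $f$'s may still lie in $\val(Y)$ --- fortunately $\le$ is the direction you need; (b) dispose separately of the degenerate case $\ell=0$, where $(O_e,O_f)$ is trivially both the first and the last element and $d_e\ge 1$ need not hold; (c) the ``interchange $Y$ and $Z$'' step deserves a sentence: trace reversal turns prefixes into suffixes, carries at-cut occurrences for the production $X\to YZ$ to at-cut occurrences of the reversed pattern for the reversed production, and reverses every arithmetic progression, so ``first'' and ``last'' swap as claimed.
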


\begin{proof}
  Let $X\to YZ$ be a production of $\dT$. Clearly, by our general
  assumption \eqref{assumption} it suffices to show that either (ii)
  holds, or $O_a<|\val(Y)|_a<O_a+|\val(\dP)|_a$ for all $a\in
  \alp(\dP)$.  We show this assertion by induction on $|\alp(\dP)|$.
  If $\alp(\dP)$ is a singleton, then it is trivially true.

  Next, we consider the case $|\alp(\dP)|=2$. So let $\{a,b\}=
  \alp(\dP)$ and hence $(a,b)\in D$ by \eqref{assumption}.  
  Assume that (ii) does not hold.
  Consider the arithmetic progression $((i_a,i_b),(d_a,d_b),\ell)$ of
  occurrences of $\val(\dP)$ at the cut of $X^{\{a,b\}}$. Then
  $\val(\dP)$ is of the form $u^nv$ for some $n\geq \ell$ and strings
  $u,v\in\{a,b\}^*$ with $v\preceq u$, $|u|_a=d_a$ and $|u|_b=d_b$.
  We conclude that $d_a,d_b>0$ as otherwise $|\alp(\dP)|\leq 1$.
  Suppose for contradiction that $i_a+\ell d_a >|\val(Y)|_a$. Since no
  prefix $w$ of $\pi_{\{a,b\}}(\val(X))$ can satisfy $|w|_a<|\val(Y)|_a$
  and $|w|_b>|\val(Y)|_b$ we conclude $i_b+\ell
  d_b\geq|\val(Y)|_b$. But then the occurrence $(i_a+\ell d_a,i_b+\ell
  d_b)$ is not at the cut of $X^{\{a,b\}}$, which is a
  contradiction. Hence $i_a+\ell d_a\leq|\val(Y)|_a$ and by symmetry
  $i_b+\ell d_b\leq|\val(Y)|_b$.  Similarly, since $(i_a,i_b)$ is an
  occurrences of $\val(\dP)$ at the cut of $X^{\{a,b\}}$, we get
  $|\val(Y)|_a\leq i_a+|\val(\dP)|_a$ and $|\val(Y)|_b\leq
  i_b+|\val(\dP)|_b$.  As $\pi_{\{a,b\}}(O)$ is neither the first nor
  the last element of the arithmetic progression we have
  $O_a=i_a+kd_a$ and $O_b=i_b+kd_b$ for some $0<k<\ell$ and hence
  $O_a<|\val(Y)|_a<O_a+|\val(\dP)|_a$ and
  $O_b<|\val(Y)|_b<O_b+|\val(\dP)|_b$ as required.

  Now, suppose that $|\alp(\dP)|\geq 3$.  Since $O$ is an occurrence
  at the cut of $X$, there are $a,b\in \alp(\dP)$ such that
  $O_a<|\val(Y)|_a$ and $O_b+|\val(\dP)|_b>|\val(Y)|_b$.  We may
  assume that $(a,b)\in D$. Indeed, if $O_a+|\val(\dP)|_a>
  |\val(Y)|_a$ choose $a=b$. Otherwise, since $\alp(\dP)$ is connected
  there is a dependence path between $a$ and $b$. Since
  $O_a+|\val(\dP)|_a\leq |\val(Y)|_a$, there must be an edge
  $(a',b')\in D$ on this path such that 
  $a', b' \in \alp(\dP)$, $O_{a'}+|\val(\dP)|_{a'}\leq
  |\val(Y)|_{a'}$ (and hence $O_{a'} < |\val(Y)|_{a'}$), and
  $O_{b'}+|\val(\dP)|_{b'}>|\val(Y)|_{b'}$.

  Next consider a spanning tree of
  $(\alp(\dP),D\cap\alp(\dP)\times\alp(\dP))$ which contains the edge
  $(a,b)$ (in case $a\ne b$).  Let $c\notin\{a,b\}$ be a leaf of this
  spanning tree (it exists since $|\alp(\dP)| \geq 3$).  
  Obviously, $\Delta=\alp(\dP)\setminus\{c\}$ is
  connected and $\pi_\Delta(O)$ is at the cut of $X^\Delta$. Thus we
  can apply the induction hypothesis.  Assume again that (ii) does not
  hold. Applying the induction hypothesis to $\pi_\Delta(\val(\dP))$
  and $X^\Delta$ we get $O_a<|\val(Y)|_a<O_a+|\val(\dP)|_a$ for all
  $a\in \Delta$. In particular, $O_d<|\val(Y)|_d<O_d+|\val(\dP)|_d$
  for some $d\in \Delta$ with $(c,d)\in D$. Hence, $\pi_{\{d,c\}}(O)$
  is at the cut of $X^{\{d,c\}}$. Thus, applying the induction
  hypothesis also to $\pi_{\{d,c\}}(\val(\dP))$ and $X^{\{d,c\}}$ we
  get $O_c<|\val(Y)|_c<O_c+|\val(\dP)|_c$.\qed
\end{proof}
The last lemma motivates that we partition the set of occurrences into
two sets.  Let $O$ be an occurrence of $\dP$ in $\dT$ at the cut of
$X$. We call $O$ \emph{single} (for $X$) if there are
$a,b\in\alp(\dP)$ with $(a,b)\in D$ such that the projection
$\pi_{\{a,b\}}(O)$ is the first or the last element of the arithmetic
progression of occurrences of $\pi_{\{a,b\}}(\val(\dP))$ at the cut of
$X^{\{a,b\}}$. Otherwise, we call $O$ \emph{periodic} (for $X$).  By
Lemma~\ref{lem:sameNT}, if $O$ is periodic, then $\pi_{\{a,b\}}(O)$ is
an element of the arithmetic progression of occurrences of
$\val(\dP^{\{a,b\}})$ at the cut of $X^{\{a,b\}}$ for all $(a,b)\in D$
(but neither the first nor the last element, if $a,b\in\alp(\dP)$).
The next proposition shows that we can decide in polynomial time
whether there are single occurrences of $\dP$ in $\dT$.

\begin{proposition}\label{prop:computesingle}
  Given $a,b\in\alp(\dP)$ with $(a,b)\in D$, a nonterminal $X$ of
  $\dT$ and an occurrence $(O_a,O_b)$ of $\pi_{\{a,b\}}(\val(\dP))$ at
  the cut of $X^{\{a,b\}}$, we can decide in time
  $(|\dT|+|\dP|)^{O(1)}$ whether this occurrence is a projection of an
  occurrence of $\dP$ at the cut of $X$.
\end{proposition}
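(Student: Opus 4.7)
The plan is to extend the given pair $(O_a,O_b)$ to a candidate Parikh vector $O=(O_c)_{c\in\Sigma}\in\N^\Sigma$ by propagating constraints through the dependence graph, and then to verify, by means of Lemma~\ref{lem:liuwrazeg}, that $O$ is an occurrence of $\dP$ in $\val(X)$. The cut condition for $O$ at $X$ is automatic: by hypothesis $(O_a,O_b)$ is at the cut of $X^{\{a,b\}}$, so, writing $X\to YZ$, we have $O_a<|\val(Y)|_a$ and $O_b+|\val(\dP)|_b>|\val(Y)|_b$ (or the symmetric variant), and the same $a,b\in\alp(\dP)$ then witness that $O$ is at the cut of $X$.

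Propagation is driven by a breadth-first search in the graph $(\Sigma,D')$ with $D'=\{(c,d)\in D\mid c\ne d\}$, starting from $\{a,b\}$. This is well-defined because $\Sigma$ is connected and $\alp(\dP)$ is connected, so BFS first exhausts $\alp(\dP)$ and then extends to $\Sigma\setminus\alp(\dP)$ via edges that, by assumption~(\ref{assumption}), have at least one endpoint in $\alp(\dP)$. Whenever BFS reaches a new vertex $c$ from a processed neighbor $c'$, we determine $O_c$ from $O_{c'}$ by the condition that $(O_{c'},O_c)$ must be an occurrence of $p':=\pi_{\{c,c'\}}(\val(\dP))$ in $w':=\pi_{\{c,c'\}}(\val(X))$.

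The key claim is that $O_c$ is uniquely determined by $O_{c'}$, when it exists. Assumption~(\ref{assumption}) ensures $p'$ is nonempty, and if both $c,c'\in\alp(\dP)$ then $p'$ contains both letters. The positions $i\in\{0,\dots,|w'|\}$ with $|w'[1:i]|_{c'}=O_{c'}$ form an interval $[L,R]$ on which $w'[L+1:R]=c^{R-L}$, and position $R+1$ carries a $c'$ (or lies past the end of $w'$). Since the first character of an occurrence of $p'$ starting at position $i+1$ must equal $p'[1]$, a short case analysis on $p'[1]\in\{c,c'\}$ and on the number of leading $c$'s in $p'$ forces $i$ to a single value. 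The candidate $O_c$ is then computable in polynomial time from the SLPs for $w'$ and $p'$ via standard prefix-counting operations on composition systems, and the match $w'[i+1:i+|p'|]=p'$ is verified using Plandowski's compressed equality test.

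After BFS produces the full vector $O$, a final round of consistency checks verifies, for every pair $(c,d)\in D'$, that $(O_c,O_d)$ is indeed an occurrence of $\pi_{\{c,d\}}(\val(\dP))$ in $\pi_{\{c,d\}}(\val(X))$, using Lifshits's algorithm. These extra checks are required because the BFS exploits only a spanning tree of $(\Sigma,D')$, whereas Lemma~\ref{lem:liuwrazeg} demands consistency along every dependent pair. If every check succeeds we answer ``yes'', otherwise ``no'', and the whole procedure runs in time polynomial in $|\dT|+|\dP|$. The main technical obstacle is the uniqueness claim for propagation, which requires separate treatment of the case $c,c'\in\alp(\dP)$ (where $p'$ contains both letters) and the case in which exactly one of $c,c'$ lies in $\alp(\dP)$ (where $p'$ is a power of the unique letter of $\alp(\dP)\cap\{c,c'\}$).
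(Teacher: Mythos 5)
Your overall strategy is the same as the paper's: extend $(O_a,O_b)$ one letter at a time along a connected enumeration of $(\Sigma,D)$, argue that each new coordinate is uniquely determined by an already-fixed dependent neighbour, and then verify the resulting Parikh vector against all dependent pairs via Lemma~\ref{lem:liuwrazeg} and Lifshits's algorithm. The cut condition being inherited from $(O_a,O_b)$ is also correct.

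There is, however, one step that fails as written, and it is precisely the step you call the main technical obstacle. A plain BFS in $(\Sigma,D')$ from $\{a,b\}$ does \emph{not} first exhaust $\alp(\dP)$: a letter $c\in\alp(\dP)$ whose shortest $D'$-path to $\{a,b\}$ inside $\alp(\dP)$ is long may acquire a BFS parent $c'\notin\alp(\dP)$ lying on a shorter path through $\Sigma\setminus\alp(\dP)$. In that configuration $p'=\pi_{\{c,c'\}}(\val(\dP))=c^m$ contains no $c'$ at all, and your own interval analysis then yields every $i$ with $L\le i\le R-m$ as a legal start position, so $O_c$ is \emph{not} determined by $O_{c'}$; the ``short case analysis'' does not force a single value in this sub-case. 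The uniqueness argument only works when the already-processed anchor $c'$ lies in $\alp(\dP)$, because then $p'$ genuinely contains a $c'$ that pins the start position against the block boundary $R$. The repair is exactly the paper's choice of ordering: enumerate $\Sigma$ so that all of $\alp(\dP)$ comes first via a connected enumeration of $\alp(\dP)$ (possible since $\alp(\dP)$ is connected), and for each later letter $d\notin\alp(\dP)$ choose the anchor in $D(d)\cap\alp(\dP)$, which is nonempty by assumption \eqref{assumption}. With that ordering the anchor is always a letter of $\alp(\dP)$, your uniqueness argument goes through in both remaining cases, and the rest of your proof (compressed prefix computations, equality tests, final consistency checks) is sound.
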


\begin{proof}
  Let $a_1,\ldots,a_n$ be an enumeration of $\Sigma$ such that
  $a=a_1$, $b=a_2$ and $D(a_i)\cap\{a_1,\ldots,a_{i-1}\}\ne \emptyset$
  for all $2\leq i\leq n$. Moreover, we require that the elements of
  $\alp(\dP)$ appear at the beginning of our enumeration, i.e., are
  the elements $a_1,\ldots,a_j$ for some $j\leq n$.  This can be
  assumed since $\Sigma$ and $\alp(\dP)$ are connected.  We iterate
  over $3\leq i\leq n$ and compute, if possible, an integer $O_{a_i}$
  such that $(O_{a_1},\ldots,O_{a_i})$ is an occurrence of
  $\pi_{\{a_1,\ldots,a_i\}}(\val(\dP))$ in
  $\pi_{\{a_1,\ldots,a_i\}}(\val(X))$.

  So let $i\geq 3$, $d=a_i$, and $\Delta=\{a_1,\ldots,a_{i-1}\}$.
  By our general assumption \eqref{assumption} we can choose some
  $c\in \Delta \cap\alp(\dP)$ such that $(c,d)\in D$.  Let us further
  assume that we have already constructed an occurrence
  $(O_{a_1},\ldots,O_{a_{i-1}})$ of $\pi_{\Delta}(\val(\dP))$ in
  $\pi_{\Delta}(\val(X))$.  First, we compute the number $k \geq 0$ such that
  $d^kc$ is a prefix of $\pi_{\{c,d\}}(\val(\dP))$.  Then, we compute
  $O_d$ such that there is a prefix $wd^kc$ of $\pi_{c,d}(\val(X))$
  for some $w\in \{c,d\}^*$ with $|w|_c=O_c$, $|w|_d=O_d$. If such a
  prefix does not exist, then there is no occurrence
  $(O_{a_1},\ldots,O_{a_{i-1}},O_{d})$ of
  $\pi_{\Delta \cup \{d\}}(\val(\dP))$ in
  $\pi_{\Delta\cup\{d\}}(\val(X))$. On the other hand, observe that
  if there is such an occurrence
  $(O_{a_1},\ldots,O_{a_{i-1}},O_{d})$, then $O_{d}=|w|_d$.
  Last, using~\cite{Lif07} we check in polynomial time for all $e\in
  D(d)\cap\Delta$ whether $(O_e,O_d)$ is an occurrence of
  $\pi_{\{d,e\}}(\val(\dP))$ in $\pi_{\{d,e\}}\val(X)$.  By
  Lemma~\ref{lem:liuwrazeg}, the latter holds if and only if 
  $(O_{a_1},\ldots,O_{a_{i-1}},O_d)$ is an occurrence of $\pi_{\Delta\cup\{d\}}(\val(\dP))$
  in $\pi_{\Delta\cup\{a_i\}}(\val(X))$.  \qed
\end{proof}
It remains to show that for every nonterminal $X$ of $\dT$ we can
compute the periodic occurrences. To this aim we define the
amalgamation of arithmetic progressions.  Let $\Gamma,\Gamma'\subseteq
\Sigma$ such that $\Gamma\cap\Gamma'\ne\emptyset$.  Consider two
arithmetic progressions
$$
p=((i_a)_{a\in \Gamma},(d_a)_{a\in \Gamma},\ell), \qquad
p'=((i'_a)_{a\in \Gamma'},(d'_a)_{a\in \Gamma'},\ell').
$$ 
The \emph{amalgamation} of $p$ and $p'$ is
$$
p\amalgam p'=\{v=(v_a)_{a\in \Gamma\cup\Gamma'}\mid  \pi_{\Gamma}(v)\in
p\text{ and } \pi_{\Gamma'}(v)\in p'\} .
$$

\begin{example}
  We continue Example \ref{exapat1} and show how to compute
  occurrences at the cut. First we consider the projections of $\dP$
  and $X$: 
  \begin{align*}
    \pi_{\{a,b\}}(\val(\dP)) &=(ab)^5   & \val(X^{\{a,b\}})&=(ab)^6|(ab)^4 \\
    \pi_{\{b,c\}}(\val(\dP))  &=(cbc)^5   & \val(X^{\{b,c\}})&=(cbc)^5cb|c(cbc)^4\\
    \pi_{\{c,d\}}(\val(\dP)) &= c^{10}  & \val(X^{\{c,d\}})&=c^2dc^9|c^8dc
  \end{align*}
  For the projections we find the arithmetic progressions
  $p_{ab},p_{bc},p_{cd}$ of
  occurrences at the cut:
  \begin{eqnarray*}
   \text{occurrences of } \pi_{\{a,b\}}(\val(\dP))\textrm{ at the cut of }X^{\{a,b\}}:&p_{ab}=\big((2,2),(1,1),3\big)\\
       \text{occurrences of }\pi_{\{b,c\}}(\val(\dP))\textrm{ at the cut of }X^{\{b,c\}}:&p_{bc}=\big((1,2),(1,2),4\big)\\
\text{occurrences of }\pi_{\{c,d\}}(\val(\dP)) \textrm{ at the cut of }X^{\{c,d\}}:&p_{cd}=\big((2,1),(1,0),7\big).
  \end{eqnarray*}
  Note that in $p_{ab}$ the first component corresponds to $a$ and the
  second to $b$ whereas in $p_{bc}$ the first component corresponds to
  $b$ and the second to $c$. We amalgamate the arithmetic progressions
   and obtain $p_{abc}=p_{ab}\amalgam
  p_{bc}=\big((2,2,4),(1,1,2),3\big)$.  If we again amalgamate we
  obtain $p_{abcd}=p_{abc}\amalgam p_{cd}=
  \big((2,2,4,1),(1,1,2,0),2\big)$.
  This way we found occurrences $(2,2,4,1)$, $(3,3,6,1)$ and
  $(4,4,8,1)$ of $\dP$ at the cut of $X$. Observe that there is a fourth
  occurrence $(1,1,2,1)$ that we did not find this way which is single.
\end{example}

\begin{lemma}\label{lem:intersectap}
  Let $\Gamma,\Gamma'\subseteq \Sigma$ with
  $\Gamma\cap\Gamma'\ne\emptyset$, and let $p=((i_a)_{a\in
    \Gamma},(d_a)_{a\in \Gamma},\ell)$ and $p'=((i'_a)_{a\in
    \Gamma'},(d'_a)_{a\in \Gamma'},\ell')$ be two arithmetic
  progressions. Then $p\amalgam p'$ is an arithmetic progression which
  can be computed in time $(|p|+|p'|)^{O(1)}$.
\end{lemma}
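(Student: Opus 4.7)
The plan is to reduce the computation of $p\amalgam p'$ to solving a linear Diophantine system in two unknowns. A vector $v\in\N^{\Gamma\cup\Gamma'}$ lies in $p\amalgam p'$ if and only if there exist integers $k\in[0,\ell]$ and $k'\in[0,\ell']$ such that $v_a=i_a+k\,d_a$ for all $a\in\Gamma$ and $v_a=i'_a+k'\,d'_a$ for all $a\in\Gamma'$. Consistency on the (non-empty) intersection $\Gamma\cap\Gamma'$ yields the system
\[
k\,d_a-k'\,d'_a \;=\; i'_a-i_a \qquad (a\in\Gamma\cap\Gamma'),
\]
subject to the range constraints $0\leq k\leq\ell$ and $0\leq k'\leq\ell'$. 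Once a one-parameter description of the integer solutions $(k,k')$ is available, substituting back into the defining equations for $v$ will give an affine expression for $v$ in that parameter, which is exactly an arithmetic progression.

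First I would compute, ignoring the box, the set $L\subseteq\mathbb{Z}^2$ of integer solutions of the above Diophantine system. Since there are only two unknowns, this is entirely elementary: a single equation with $(d_a,d'_a)\neq(0,0)$ has integer solution set $(k_0,k'_0)+\mathbb{Z}\cdot(d'_a/g,\,d_a/g)$ with $g=\gcd(d_a,d'_a)$, where a particular solution and the generator of the kernel are produced by the extended Euclidean algorithm in time polynomial in the bit-length of the input. Every further equation is then either redundant, forces the free parameter into a sublattice of $\mathbb{Z}$ (again handled by one application of the extended Euclidean algorithm), or is inconsistent. The outcome is, in polynomial time, a description of $L$ as either the empty set, a single point, or a one-parameter family $\{(k_0,k'_0)+m\,(\alpha,\beta)\mid m\in\mathbb{Z}\}$ with explicitly computed integers of polynomial bit-size.

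Next I would intersect $L$ with the box $[0,\ell]\times[0,\ell']$: the four range constraints become upper and lower bounds on $m$, giving a range $m\in[m_{\min},m_{\max}]$ (possibly empty). Substituting the parameterization back yields, for every $a\in\Gamma\cup\Gamma'$, an affine expression of $v_a$ in $m$, namely $v_a=(i_a+k_0 d_a)+m\cdot(\alpha d_a)$ for $a\in\Gamma$ and $v_a=(i'_a+k'_0 d'_a)+m\cdot(\beta d'_a)$ for $a\in\Gamma'$; these agree on $\Gamma\cap\Gamma'$ by construction. Re-indexing $m$ to start at $0$ and taking the new length to be $m_{\max}-m_{\min}$ delivers the required arithmetic-progression representation of $p\amalgam p'$.

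The main obstacle is not algorithmic depth but careful case analysis: one must treat in a uniform way the degenerate situations $(d_a,d'_a)=(0,0)$ for some $a\in\Gamma\cap\Gamma'$, overdetermined but consistent systems, and the case $(\alpha,\beta)=(0,0)$ where the integer solution is unique. Provided this book-keeping is done correctly, every arithmetic operation concerns integers of polynomial bit-size, so the total running time is $(|p|+|p'|)^{O(1)}$ as claimed.
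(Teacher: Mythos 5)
Your proposal is correct and follows essentially the same route as the paper: both reduce $p\amalgam p'$ to the two-variable linear Diophantine system $i_a+d_a k=i'_a+d'_a k'$ over $\Gamma\cap\Gamma'$, parameterize the integer solutions of one equation via gcd/extended Euclid, check the remaining equations for redundancy versus a unique solution versus inconsistency, and finally intersect with the box $[0,\ell]\times[0,\ell']$ and substitute back to get an arithmetic progression. The only cosmetic difference is that a non-redundant further equation pins the parameter down to at most one value rather than a proper sublattice, but your case analysis covers this and the argument goes through.
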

\begin{proof}
  We need to solve the system of linear equations
  \begin{equation}\label{eq:amalgam-systm}
    \left[~i_b+d_b\cdot x=i'_b+d'_b\cdot y~\right]_{b\in \Gamma\cap\Gamma'}
  \end{equation}
  for integers $x$ and $y$ under the constraint
  \begin{equation}\label{eq:amalgam-constr}
    0\leq x\leq \ell \text{ and } 0\leq y \leq \ell'.
  \end{equation}
  Let us fix an $a\in \Gamma\cap\Gamma'$. First we solve the single
  equation
  \begin{equation}\label{eq:amalgam-sing}
    i_a+d_a\cdot x=i'_a+d'_a\cdot y.
  \end{equation}
  for non-negative integers $x$ and $y$. The solutions are given by
  the least solution plus a mutliple of the least common multiple of
  $d_a$ and $d'_a$. We start by computing $g=\gcd(d_{a},d'_{a})$.  If
  $i_{a}\ne i'_{a} \mod g$, then there is no solution for
  equation~\eqref{eq:amalgam-sing} and hence $p\amalgam
  p'=\emptyset$. In this case we stop. Otherwise, we compute the least
  solution $s_{a}\geq \max(i_{a},i'_{a})$ of the simultaneous
  congruences
 \begin{align*}
   z&=i_{a}\mod d_{a},\\z&=i'_{a} \mod d'_{a}.
 \end{align*} 
 This can be accomplished with $(\log(d_a)+\log(d'_a))^2$ many bit
 operations; see e.g.~\cite{BacSha96}.  Let $k=(s_{a}-i_{a})/d_{a}
 \geq 0$
 and $k'=(s_a-i'_a)/d'_a \geq 0$. Now, the non-negative solutions of
 equation~\eqref{eq:amalgam-sing} are given by
 \begin{equation}\label{eq:amalgam-sol}
   (x,y)=(k+\frac{d'_a}{g}\cdot t,k'+\frac{d_a}{g}\cdot t) \text{ for all } t\geq
   0. 
 \end{equation}
 If $|\Gamma\cap\Gamma'|=1$ we adapt the range for $t$ such that the
 constraint~\eqref{eq:amalgam-constr} is satisfied and we are done.

 Otherwise, \eqref{eq:amalgam-systm} is a system of at least $2$ 
 linear equations in $2$ variables. Hence \eqref{eq:amalgam-systm} has
 at least $2$ (and then infinitely many) solutions iff any two equations
 are linearly dependent over $\mathbb{Q}$, i.e.  for all $b\in \Gamma\cap\Gamma'$ the
 following holds:
 \begin{equation}\label{eq:amalgam-condit}
   \exists k_b \in \mathbb{Q} : 
   d_a=k_b\cdot d_b,~~d'_b=k_b\cdot d'_a \text{ and }i'_a-i_a=k_b\cdot (i'_b-i_b)
 \end{equation}
 In this case all solutions of equation~\eqref{eq:amalgam-sing} are
 solutions of equation~\eqref{eq:amalgam-systm}. Thus we can test
 condition~\eqref{eq:amalgam-condit} for all $b\in \Gamma\cap\Gamma'$
 and in case it holds it only remains to adapt the range for $t$ such
 that the constraint~\eqref{eq:amalgam-constr} is satisfied.
 Otherwise there is at most one solution and we can fix $b\in
 \Gamma\cap\Gamma'$ such that \eqref{eq:amalgam-condit} does not
 hold. We plug the solution~\eqref{eq:amalgam-sol} into $i_b+d_b\cdot
 x=i'_b+d'_b\cdot y$ and obtain
 \begin{equation*}
   i_b+(k+ \frac{d'_a}{g}\cdot t)\cdot d_b =
   i'_b+(k'+\frac{d_a}{g}\cdot t)\cdot d'_b.
 \end{equation*}
 We can solve this for $t$ (if possible) and test whether this gives
 rise to a solution for \eqref{eq:amalgam-systm} under the
 constraint~\eqref{eq:amalgam-constr}.  \qed
\end{proof}

\begin{proposition}\label{prop:computeperiodic}
  Let $X$ be a nonterminal of $\dT$.  The periodic occurrences of
  $\dP$ at the cut of $X$ form an arithmetic progression which
  can be computed in time ${(|\dT|+|\dP|)^{O(1)}}$.
\end{proposition}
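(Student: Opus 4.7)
The plan is to compute the set of periodic occurrences of $\dP$ at the cut of $X$ as an amalgamation of arithmetic progressions, one for each dependent pair of distinct letters. For every pair $(a,b) \in D$ with $a \neq b$, I would first invoke Lifshits's algorithm (as recalled just before Lemma~\ref{lem:sameNT}) to obtain the arithmetic progression $p_{a,b}$ of all occurrences of $\pi_{\{a,b\}}(\val(\dP))$ at the cut of $X^{\{a,b\}}$; each $p_{a,b}$ has polynomial bit-size and is computable in polynomial time.

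Next I would refine these progressions. For pairs $(a,b) \in D$ with $a \neq b$ and $a,b \in \alp(\dP)$, I set $q_{a,b} := ((i_a+d_a, i_b+d_b),\,(d_a,d_b),\,\ell-2)$ when $p_{a,b} = ((i_a,i_b),(d_a,d_b),\ell)$ has $\ell \geq 2$, and $q_{a,b} := \emptyset$ otherwise, thereby stripping the first and last elements. For pairs with exactly one letter in $\alp(\dP)$ (the only other case by assumption~\eqref{assumption}), I set $q_{a,b} := p_{a,b}$. I would then iteratively amalgamate the $q_{a,b}$ via Lemma~\ref{lem:intersectap}, ordering them so that at each step the accumulated alphabet shares a letter with the next pair; this is arranged via a spanning tree of the connected graph $(\Sigma,D)$. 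Since $|\Sigma|$ is fixed, the resulting arithmetic progression $q \subseteq \mathbb{N}^\Sigma$ is computable in polynomial time.

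Correctness amounts to showing that $q$ equals the set of periodic occurrences of $\dP$ at the cut of $X$. If $O$ is periodic, Lemma~\ref{lem:sameNT}(i) gives $\pi_{\{a,b\}}(O) \in p_{a,b}$ for every dependent pair of distinct letters; when $a,b \in \alp(\dP)$, the definition of \emph{periodic} excludes the endpoints of $p_{a,b}$, so $\pi_{\{a,b\}}(O) \in q_{a,b}$ and hence $O \in q$. Conversely, if $O \in q$, then $\pi_{\{a,b\}}(O) \in p_{a,b}$ for every dependent pair with $a \neq b$, so Lemma~\ref{lem:liuwrazeg} yields that $O$ is an occurrence of $\dP$ in $\val(X)$; any nonempty $q_{a,b}$ with $a,b \in \alp(\dP)$ simultaneously witnesses that $O$ is at the cut of $X$ and that $O$ is not single, i.e., is periodic.

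The main subtlety is the degenerate situation in which no pair $(a,b) \in D$ with $a \neq b$ and $a,b \in \alp(\dP)$ exists (e.g., $|\alp(\dP)| = 1$): then periodicity is a vacuous condition, so the amalgamation still works correctly, but the cut condition for $O$ must be witnessed by a pair where only one letter lies in $\alp(\dP)$, which is available by connectedness of $\Sigma$ together with assumption~\eqref{assumption}. The remaining work is purely bookkeeping about arithmetic progressions, controlled by Lemma~\ref{lem:intersectap}.
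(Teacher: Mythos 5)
Your proposal is correct and follows essentially the same route as the paper: compute the pairwise arithmetic progressions of occurrences at the cut via Lifshits, strip the endpoints for pairs inside $\alp(\dP)$, amalgamate them via Lemma~\ref{lem:intersectap} in an order keeping the accumulated alphabet connected, and justify correctness by Lemmas~\ref{lem:liuwrazeg} and~\ref{lem:sameNT}. The only (immaterial) difference is that the paper organizes the amalgamation letter by letter along an enumeration of $\Sigma$, whereas you organize it edge by edge.
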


\begin{proof}
  As in the proof of Proposition~\ref{prop:computesingle} let
  $a_1,\ldots,a_n$ be an enumeration of $\Sigma$ such that
  $\{a_1,\ldots,a_{i-1}\}\cap D(a_i)\ne \emptyset$ for all $2\leq
  i\leq n$ and the elements of $\alp(\dP)$ appear at the beginning of
  the enumeration.  We iterate over $1\leq i\leq n$ and compute the
  arithmetic progressions of the periodic occurrences of
  $\pi_{\{a_1,\ldots,a_i\}}(\val(\dP))$ at the cut of
  $X^{\{a_1,\ldots,a_i\}}$. For $i=1$ this is easy.

  So let $i\geq 2$, let $a=a_i$ and let
  $\Delta=\{a_1,\ldots,a_{i-1}\}$.  Assume that the periodic
  occurrences of $\pi_{\Delta}(\val(\dP))$ at the cut of $X^{\Delta}$
  are given by the arithmetic progression
  $p=((i_c)_{c\in\Delta},(d_c)_{c\in\Delta},\ell)$. For all $b\in
  D(a)\cap \Delta$ let
  $$p^{\{a,b\}}=((i^{\{a,b\}}_a,i^{\{a,b\}}_{b}),(d^{\{a,b\}}_a,d^{\{a,b\}}_{b}),n^{\{a,b\}})$$
  be the occurrences of $\pi_{\{a,b\}}(\val(\dP))$ at the cut of
  $X^{\{a,b\}}$ (without the first and the last occurrence if
  $a,b\in\alp(\dP)$).  Recall that we assume that
  $\{c,d\}\cap\alp(\dP)\ne \emptyset$ for all $c,d\in \Sigma$ with
  $(c,d)\in D$ and $c \neq d$. Hence, by Lemma~\ref{lem:liuwrazeg},
  $O$ is a periodic occurrence 
  of $\pi_{\{a_1,\ldots,a_i\}}(\val(\dP))$ at
  the cut of $X^{\{a_1,\ldots,a_i\}}$ if and only if $\pi_\Delta(O)\in
  p$ and $(O_a,O_{b})\in p^{\{a,b\}}$ for all $b\in D(a)\cap \Delta$.
  Hence the periodic occurrences of
  $\pi_{\{a_1,\ldots,a_i\}}(\val(\dP))$ at the cut of
  $X^{\{a_1,\ldots,a_i\}}$ are given by
 $$ \bigotimes_{b\in D(a)\cap \Delta}p^{\{a,b\}} \otimes p.$$ 
 The result follows now from Lemma~\ref{lem:intersectap}.\qed
\end{proof}
Summarizing the last section we get the following theorem.

\begin{theorem}\label{thm:patmat}
  Given an independence alphabet $(\Sigma,I)$, and two SLPs $\dP$ and
  $\dT$ over $\Sigma$ such that $\alp(\dP)=\alp(\dT)$, we can decide
  in polynomial time whether $[\val(\dP)]_I$ is a factor of
  $[\val(\dT)]_I$.
\end{theorem}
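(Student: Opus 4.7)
The plan is to glue together Propositions~\ref{prop:computesingle} and~\ref{prop:computeperiodic}. First we perform some preprocessing so that assumption~\eqref{assumption} is in force. Setting $\Sigma:=\alp(\dP)=\alp(\dT)$ and looking at the dependence graph $(\Sigma,D)$, if $\Sigma$ splits into connected components $\Sigma_1,\dots,\Sigma_k$, then for any trace $u$ over $\Sigma$ we have $u=\pi_{\Sigma_1}(u)\cdots \pi_{\Sigma_k}(u)$ with pairwise independent factors; consequently $[\val(\dP)]_I$ is a factor of $[\val(\dT)]_I$ if and only if $[\val(\dP^{\Sigma_i})]_I$ is a factor of $[\val(\dT^{\Sigma_i})]_I$ for every $i$. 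Since $\alp(\dP)=\Sigma\supseteq \Sigma_i$ forces $\alp(\dP^{\Sigma_i})=\Sigma_i$ (and likewise for $\dT$), each subproblem again satisfies the theorem's hypothesis and has connected alphabet. We may therefore assume $\Sigma$ connected, and since $\alp(\dP)=\Sigma$ the second clause of~\eqref{assumption} then holds trivially. The corner case $|\val(\dP)|\le 1$ reduces to checking whether a given letter appears in $\val(\dT)$ and is easily handled.

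Now assume $|\val(\dP)|\ge 2$, so that $[\val(\dP)]_I$ is a factor of $[\val(\dT)]_I$ if and only if some nonterminal $X$ of $\dT$ admits an occurrence of $\dP$ at its cut. The algorithm iterates over the polynomially many nonterminals $X$ of $\dT$ and in each case applies the dichotomy of Lemma~\ref{lem:sameNT}. For the periodic occurrences at the cut of $X$, Proposition~\ref{prop:computeperiodic} produces their arithmetic progression in polynomial time, so emptiness is tested directly. For the single occurrences, we enumerate the $O(|\Sigma|^2)$ pairs $(a,b)\in D$ with $a,b\in\alp(\dP)$; for each such pair we compute via Lifshits's algorithm the arithmetic progression of occurrences of $\pi_{\{a,b\}}(\val(\dP))$ at the cut of $X^{\{a,b\}}$, and feed its first and last elements into Proposition~\ref{prop:computesingle} to verify whether either extends to a full occurrence of $\dP$ at the cut of $X$. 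By Lemma~\ref{lem:sameNT}, every occurrence of $\dP$ at the cut of $X$ is detected by one of these two procedures.

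Each step runs in polynomial time in $|\dT|+|\dP|$, and the outer loops over nonterminals and dependent pairs add only a polynomial factor, so the overall running time is polynomial. There is no substantially new obstacle at this stage; the genuine work lives in the preceding propositions. The only points requiring care are to see that the preprocessing truly restores assumption~\eqref{assumption} and that the single/periodic split of Lemma~\ref{lem:sameNT} makes the finite enumeration of ``endpoint'' candidates handed to Proposition~\ref{prop:computesingle} sufficient to catch every non-periodic occurrence at each cut.
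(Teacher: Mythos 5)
Your proposal is correct and follows essentially the same route as the paper: reduce to a connected alphabet (where assumption~\eqref{assumption} holds automatically since $\alp(\dP)=\alp(\dT)$), handle $|\val(\dP)|\le 1$ separately, and then for each nonterminal $X$ detect single occurrences by applying Proposition~\ref{prop:computesingle} to the endpoints of the Lifshits arithmetic progressions and periodic ones via Proposition~\ref{prop:computeperiodic}, with Lemma~\ref{lem:sameNT} guaranteeing completeness of this dichotomy. No gaps.
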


\begin{proof}
  Note that our assumption  \eqref{assumption} is satsified
  if $\alp(\dP)=\alp(\dT)$.
  Recall that we may assume that $\alp(\dT)$ is connected and that
  $|\val(\dP)|\geq 2$.

  Let $X$ be a nonterminal of $\dT$.  Using \cite{Lif07} we compute
  for each pair $(a,b)\in D$ the arithmetic progression of occurrences
  of $\pi_{a,b}(\val(\dP))$ at the cut of $X^{\{a,b\}}$. By applying
  Proposition~\ref{prop:computesingle} to the first and to the last
  elements of each of these arithmetic progressions, we compute in
  polynomial time the single
  occurrences at the cut of $X$.  The periodic occurrences can be
  computed in polynomial time using Proposition~\ref{prop:computeperiodic}.  The result
  follows now since by definition $[\val(\dP)]_I$ is a factor of
  $[\val(\dT)]_I$ iff there is a nonterminal $X$ of $\dT$ such that
  there is either a single occurrence of $\dP$ at the cut of $X$ or a
  periodic occurrence of $\dP$ at the cut of $X$.  \qed
\end{proof}

\begin{remark}
  In the last section we actually proved the theorem above under
  weaker assumptions: We only need for each connected component
  $\Sigma_i$ of $\alp(\dT)$ that $\Sigma_i\cap \alp(\dP)$ is connected
  and that $\{a,b\}\cap\alp(\dP)\ne \emptyset$ for all $(a,b)\in
  D\cap (\Sigma_i\times\Sigma_i)$ with $a\ne b$.
\end{remark}

\section{Compressed conjugacy}
\label{sec:CC}

In this section we will prove Theorem~\ref{decidesccp}.  For this, we
will follow the approach from \cite{LiuWraZeg90,Wra89} for
non-compressed traces.  The following result allows us to transfer the
conjugacy problem to a problem on (compressed) traces:

\begin{theorem}[\cite{LiuWraZeg90,Wra89}] \label{conjugacymonoidgroup}
  Let $u,v\in \dM(\Sigma^{\pm 1},I)$. Then the following are
  equivalent:
  \begin{enumerate}[(1)]
  \item $u$ is conjugated to $v$ in $\dG(\Sigma,I)$.
  \item There exists $x \in \dM(\Sigma^{\pm 1},I)$ such that $x\,
    \CR(u) = \CR(v)\, x$ in $\dM(\Sigma^{\pm 1},I)$ (it is said that
    $\CR(u)$ and $\CR(v)$ are conjugated in $\dM(\Sigma^{\pm 1},I)$).
  \item $|\CR(u)|_a = |\CR(v)|_a$ for all $a \in \Sigma^{\pm 1}$ and
    there exists $k\leq |\Sigma^{\pm 1}|$ such that $\CR(u)$ is a
    factor of $\CR(v)^k$.
  \end{enumerate}
\end{theorem}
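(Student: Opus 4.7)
The plan is to establish the cycle $(1) \Rightarrow (2) \Rightarrow (3) \Rightarrow (1)$, using the structural fact that every trace $w \in \dM(\Sigma^{\pm 1}, I)$ is conjugate in $\dG(\Sigma,I)$ to its core $\CR(w)$ (by definition, $\NF_R(w) = p\, \CR(w)\, p^{-1}$ for some trace $p$), and that $\CR(w)$ is cyclically $R$-irreducible. The direction $(2) \Rightarrow (1)$ is then immediate: monoid conjugacy is stronger than group conjugacy, so $x\,\CR(u) = \CR(v)\,x$ in $\dM(\Sigma^{\pm 1},I)$ gives that $\CR(u)$ and $\CR(v)$ are conjugate in $\dG(\Sigma,I)$, and combining with the structural fact yields that $u$ is conjugate to $v$.

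For $(1) \Rightarrow (2)$, I would first reduce to showing that $\CR(u)$ and $\CR(v)$ are conjugate in $\dG(\Sigma,I)$ (immediate from $(1)$ and the structural fact). The core of the proof is then the lemma that two cyclically $R$-irreducible traces $p, q$ which are conjugate in $\dG(\Sigma,I)$ are already conjugate in $\dM(\Sigma^{\pm 1},I)$. My approach would be to take $y \in \IRR(R)$ of minimal length with $y p y^{-1} = q$ in $\dG(\Sigma,I)$ and argue that $yp$ (equivalently $qy$) must already be $R$-irreducible. Otherwise a maximal letter $a$ of $y$ cancels with the inverse letter $a^{-1} \in \min(p)$; the cyclic $R$-irreducibility of $p$ (which forbids both $a$ and $a^{-1}$ from simultaneously appearing in $\min(p)\cap \max(p)$ after commutation) together with Levi's Lemma would then allow one to produce a strictly shorter conjugator, a contradiction. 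So $yp = qy$ already holds in $\dM(\Sigma^{\pm 1}, I)$, and $x := y$ works.

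For $(2) \Rightarrow (3)$, Parikh equality of $\CR(u)$ and $\CR(v)$ follows immediately by counting letters on both sides of $x\,\CR(u) = \CR(v)\,x$. For the factor condition, I would iterate the identity to $x\,\CR(u)^n = \CR(v)^n\,x$ for every $n \geq 0$. Choosing $x$ of minimum length, a pigeonhole-plus-Levi argument should bound $|x| \leq (|\Sigma^{\pm 1}| - 1)\,|\CR(v)|$: if $x$ were longer, some letter $a$ would appear so often in $x$ relative to $|\CR(v)|_a$ that Levi's Lemma applied to $x\,\CR(u) = \CR(v)\,x$ exhibits a strictly shorter $x'$ still satisfying $x'\,\CR(u) = \CR(v)\,x'$. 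With this bound in hand, $\CR(u)$ is a suffix of $\CR(v)^{|\Sigma^{\pm 1}|-1}\,x$, and absorbing the short $x$ into one further copy of $\CR(v)$ yields $\CR(u)$ as a factor of $\CR(v)^{|\Sigma^{\pm 1}|}$.

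Finally, for $(3) \Rightarrow (2)$, write $\CR(v)^k = s\,\CR(u)\,t$; the Parikh equality forces $|\CR(u)| = |\CR(v)|$, hence $|ts| = (k-1)\,|\CR(v)|$. Expanding $\CR(v)^{k+1}$ in two ways gives $\CR(v)\,s\,\CR(u)\,t = s\,\CR(u)\,t\,\CR(v)$, and applying Levi's Lemma to this trace identity, while using cyclic $R$-irreducibility of $\CR(v)^{k+1}$ to rule out nontrivial commuting factors at the cuts, forces $t\,\CR(u) = \CR(v)\,t$ in $\dM(\Sigma^{\pm 1},I)$; so $x := t$ witnesses $(2)$. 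The main obstacles will be the combinatorial argument in $(1) \Rightarrow (2)$ converting a group conjugator into a monoid conjugator using cyclic $R$-irreducibility, and the pigeonhole-plus-Levi analysis in $(2) \Rightarrow (3)$ producing the quantitative bound $k \leq |\Sigma^{\pm 1}|$.
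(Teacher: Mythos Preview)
The paper does not contain a proof of this theorem at all: it is stated as a cited result, and immediately after the statement the authors write that ``the equivalence of (1) and (2) can be found in \cite{Wra89}, the equivalence of (2) and (3) is shown in \cite{LiuWraZeg90}.'' So there is no in-paper argument to compare your proposal against.

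That said, your outline follows the classical architecture of those references and is broadly on the right track, but several steps are more delicate than your sketch suggests. In $(1)\Rightarrow(2)$, arguing that a minimal-length group conjugator $y$ makes $yp$ already $R$-irreducible is not enough: you must simultaneously control cancellation between $p$ and $y^{-1}$, and the way cyclic $R$-irreducibility of $p$ blocks the shortening needs a careful Levi decomposition rather than just looking at a single maximal letter. In $(2)\Rightarrow(3)$, your pigeonhole-plus-Levi sketch does not obviously yield the specific bound $|x|\le(|\Sigma^{\pm1}|-1)\,|\CR(v)|$; the argument in \cite{LiuWraZeg90} instead analyses connected components of $\alp(\CR(v))$ and uses a transposition decomposition of monoid-conjugate traces, which is where the constant $|\Sigma^{\pm1}|$ actually comes from. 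In $(3)\Rightarrow(2)$, one application of Levi's Lemma to $\CR(v)\,s\,\CR(u)\,t = s\,\CR(u)\,t\,\CR(v)$ does not by itself force $t\,\CR(u)=\CR(v)\,t$; you need an inductive or periodicity argument (again using connectedness of the relevant alphabets) to eliminate the commuting side factors. None of these are fatal to the strategy, but each requires real work that your proposal has not yet supplied.
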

The equivalence of (1) and (2) can be found in \cite{Wra89}, the
equivalence of (2) and (3) is shown in \cite{LiuWraZeg90}.  We can now
infer Theorem~\ref{decidesccp}:

\medskip

\noindent
{\em Proof of Theorem~\ref{decidesccp}.}
Let $\dA$ and $\dB$ be two given SLPs over $\Sigma^{\pm 1}$.  We want
to check, whether $\val(\dA)$ and $\val(\dB)$ represent conjugated
elements of the graph group $\dG(\Sigma,I)$.  Using
Corollary~\ref{computecore}, we can compute in polynomial time SLPs
$\dC$ and $\dD$ with $[\val(\dC)]_I=\CR([\val(\dA)]_I)$ and
$[\val(\dD)]_I=\CR([\val(\dB)]_I)$.  By
Theorem~\ref{conjugacymonoidgroup}, it suffices to check the following
two conditions:
\begin{itemize}
\item $|\CR([\val(\dC)]_I)|_a = |\CR([\val(\dD)]_I)|_a$ for all $a \in
  \Sigma^{\pm 1}$
\item There exists $k\leq |\Sigma^{\pm 1}|$ such that
  $\CR([\val(\dC)]_I)$ is a factor of $\CR([\val(\dD)]_I)^k$.
\end{itemize}
The first condition can be easily checked in polynomial time, since
the number of occurrences of a symbol in a compressed strings can be
computed in polynomial time.  Moreover, the second condition can be
checked in polynomial time by Theorem~\ref{thm:patmat}, since (by the
first condition) we can assume that $\alp(\val(\dC)) =
\alp(\val(\dD))$.  \qed

\section{Open problems}

Though we have shown that some cases of the simultaneous compressed
conjugacy problem for graph groups (see
Section~\ref{sec:main-results}) can be decided in polynomial time, it
remains unclear whether this holds also for the general case.  It is
also unclear to the authors, whether the general compressed pattern
matching problem for traces, where we drop restriction 
\eqref{assumption}, can be decided in
polynomial time. Finally, it is not clear, whether 
Theorem~\ref{decidesccp}--\ref{thm:outer} also hold if the 
independence alphabet is part of the input.

\def\cprime{$'$}

\end{document}